\documentclass[onecolumn, 12 pt]{IEEEtran}  

\IEEEoverridecommandlockouts

\usepackage{graphicx}
\usepackage{epsfig} 
\usepackage{times} 
\usepackage{amsmath}
\usepackage{amssymb}  
\usepackage{color}
\usepackage{amsfonts}
\usepackage{subfigure}
\usepackage{multirow}
\usepackage{multicol}
\usepackage{siunitx}
\usepackage{overpic}
\usepackage{mathrsfs}  

\usepackage{booktabs}
\usepackage{threeparttable}

\usepackage{epstopdf}
\epstopdfsetup{update} 

\newtheorem{lemma}{Lemma}

\newtheorem{theorem}{Theorem}

\newtheorem{assumption}{Assumption}
\newtheorem{corollary}{Corollary}
\usepackage{xspace}

\begin{document}

\title{\LARGE \bf Predictor-Feedback Stabilization of \\ Multi-Input Nonlinear Systems}

\author{Nikolaos Bekiaris-Liberis and Miroslav Krstic

\thanks{N. Bekiaris-Liberis is with the Department of Production Engineering \& Management, Technical University of Crete, Chania, 73100, Greece. Email address: \texttt{nikos.bekiaris@gmail.com}. 

M. Krstic is with the Department of Mechanical \& Aerospace Engineering, University of California, San Diego, La Jolla, CA, 92093, USA. Email address: \texttt{krstic@ucsd.edu}.} }

\maketitle
\thispagestyle{empty}

\begin{abstract}
\baselineskip=1.7\normalbaselineskip
We develop a predictor-feedback control design for multi-input nonlinear systems with distinct input delays, of arbitrary length, in each individual input channel. Due to the fact that different input signals reach the plant at different time instants, the key design challenge, which we resolve, is the construction of the predictors of the plant's state over distinct prediction horizons such that the corresponding input delays are compensated. Global asymptotic stability of the closed-loop system is established by utilizing arguments based on Lyapunov functionals or estimates on solutions. We specialize our methodology to linear systems for which the predictor-feedback control laws are available explicitly and for which global exponential stability is achievable. A detailed example is provided dealing with the stabilization of the nonholonomic unicycle, subject to two different input delays affecting the speed and turning rate, for the illustration of our methodology.
\end{abstract}
\baselineskip=1.8\normalbaselineskip
\section{Introduction}
\paragraph{Background and Motivation}






Despite the recent outburst in the development of predictor-based control laws for nonlinear systems with input delays \cite{bek1}, \cite{bek2}, \cite{bek3}, \cite{bek4}, \cite{bek5}, \cite{bek6}, \cite{bek7}, \cite{delph1}, \cite{delph2}, \cite{delph3}, \cite{delph4}, \cite{delph5}, \cite{kar1}, \cite{kar2}, \cite{kar3}, \cite{kar4}, \cite{kar5}, \cite{karn1}, \cite{kar6}, \cite{krstic first}, \cite{krstic book}, \cite{krstic}, \cite{mazenc0}, \cite{mazenc1}, \cite{mazenc11}, \cite{mazenc2}, \cite{mazenc3}, the problem of the systematic predictor-feedback stabilization of multi-input nonlinear systems with, potentially different, in each individual input channel, long input delays, has remained, heretofore, untackled, although the problem was solved in the linear case in the early 1980s \cite{artstein} (see also \cite{manitius}).  In this article, we address the problem of stabilization of multi-input nonlinear systems with distinct input delays of arbitrary length and develop a nonlinear version of the prediction-based control laws developed in \cite{artstein} and recently in \cite{daisuke new}, \cite{daisuke} for the compensation of input delays in multi-input linear systems.

Besides the unavailability of a systematic predictor-feedback design methodology for multi-input nonlinear systems with long input delays, the real motivation for this article comes from applications. Such systems serve as models for the dynamics of traffic \cite{ge}, \cite{orosz2}, teleoperators \cite{spong} and robotic manipulators \cite{un}, \cite{fisher}, motors \cite{farshad}, \cite{gideon}, multi-agent systems \cite{multi agent 1}, \cite{multi agent 2}, \cite{malisoff2}, autonomous ground vehicles \cite{malisoff1}, unmanned aerial vehicles \cite{gru} and planar vertical take-off and landing aircrafts \cite{mazencvtol}, \cite{palomino}, and the human musculoskeletal system in applications such as neuromuscular electrical stimulation \cite{kar6}, \cite{lan}, \cite{dix1}, to name only a few. Motivated by the negative effects of input delays on the stability and performance of such control systems, in this article we present control designs that achieve delay compensation.

\paragraph{Contributions}

We introduce a predictor-feedback control design for the compensation of long input delays in multi-input nonlinear systems. Since each individual input channel might induce a different delay the predictors of the plant's state are constructed recursively starting from the predictor that corresponds to the smallest input delay all the way through to the predictor that corresponds to the largest input delay. Specifically, at each step, the predictor, over the prediction horizon that corresponds to the current's step input delay, is constructed by actually predicting, over the appropriate prediction window, the future values of the predictor constructed at the previous step.

We conduct the stability analysis of the closed-loop system, under the developed predictor-feedback control law, utilizing two different techniques--one based on the construction of a Lyapunov functional and one based on estimates on the solutions of the closed-loop system. In the former case, the construction of a Lyapunov functional is enabled by the introduction of novel backstepping transformations of the actuator states, which are based on an equivalent, PDE representation of the constructed predictor states. In the latter approach, we exploit the fact that each delay is compensated after a finite time.

We present a detailed example, including numerical simulations, dealing with the stabilization of a nonholonomic robot subject to different input delays, in order to highlight the intricacies of our design and analysis methodologies. We specialize our results to the case of linear systems for which the predictor-feedback control laws are obtained explicitly and for which global exponential stability is achievable.








\paragraph{Organization}
We start in Section \ref{sec2} with the introduction to the problem of predictor-feedback stabilization of multi-input nonlinear systems and develop the predictor-feedback control laws. In Section \ref{sec3} we prove global asymptotic stability of the closed-loop system under predictor-feedback by constructing a Lyapunov functional and in Section \ref{sec4} we prove global asymptotic stability using estimates on solutions. Section \ref{uni} is devoted to a detailed example of stabilization of the nonholonomic unicycle subject to input delays. We specialize our methodology to the case of linear systems in Section \ref{linear section}. For the example worked out in detail in Section \ref{uni} we present simulation results in Section \ref{sec sim}.

\paragraph*{Notation}We use the common definition of class $\mathcal{K}$, $\mathcal{K}_{\infty}$ and $\mathcal{KL}$ functions from \cite{khalil}. For an $n$-vector, the norm $|\cdot|$ denotes the usual Euclidean norm. For a function $u: [0,D]\times\mathbb{R}_+\to\mathbb{R}$ we denote by $\|u(t)\|_{\infty}$ its spatial supremum norm, i.e., $\|u(t)\|_{\infty}=\sup_{x\in[0,D]}|u(x,t)|$. For any $c>0$, we denote the spatially weighted supremum norm of $u$ by $\|u(t)\|_{c,\infty}=\sup_{x\in[0,D]}|e^{cx}u(x,t)|$. For a vector valued function $p: [0,D]\times \mathbb{R}_+\to\mathbb{R}^n$ we use a spatial supremum norm $\|p(t)\|_{\infty}=\sup_{x\in[0,D]}\sqrt{p_1(x,t)^2+\ldots p_n(x,t)^2}$. We denote by $C^j(A;E)$ the space of functions that take values in $E$ and have continuous derivatives of order $j$ on $A$. 

\section{Multi-Input Nonlinear Systems with Distinct Delays \\and Predictor-Feedback Control Design}
\label{sec2}
We consider the following system (see Fig. \ref{multifig})
\begin{figure}
\centering
\includegraphics[width=0.8\linewidth]{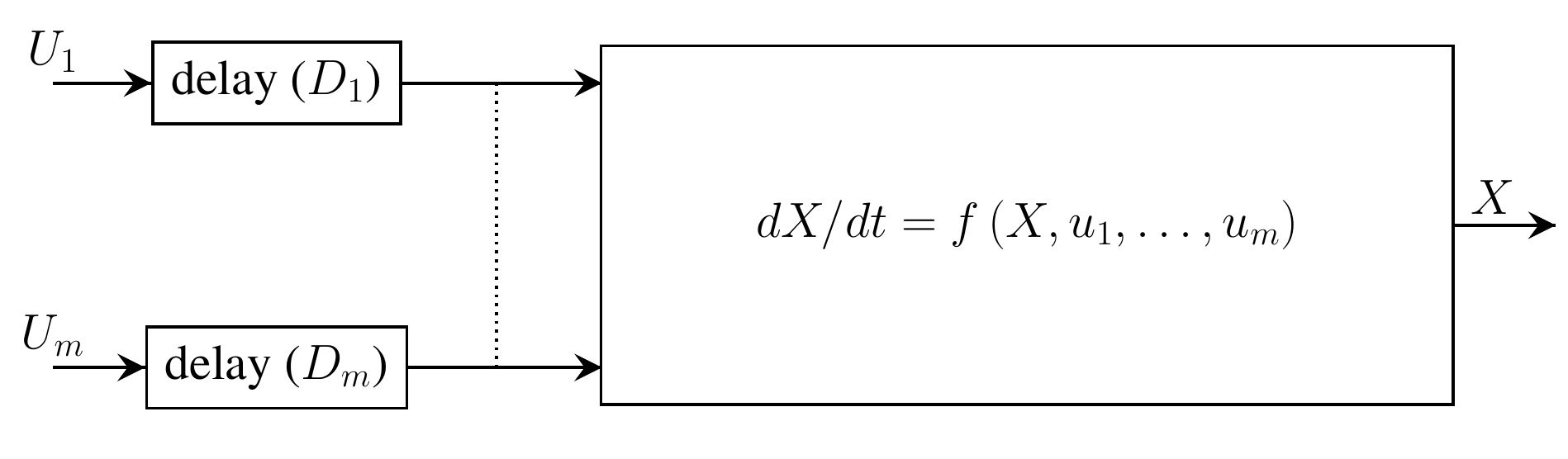}
\caption{Multi-input nonlinear system with distinct input delays.}
\label{multifig}
\end{figure}
\begin{eqnarray}
\dot{X}(t)=f\left(X(t),U_1\left(t-D_1\right),\ldots,U_m\left(t-D_m\right)\right),\label{plant}
\end{eqnarray}
where $X\in\mathbb{R}^n$ is state, $t\geq0$ is time, $U_1 ,\ldots,U_m\in\mathbb{R}$ are control inputs, $D_1,\ldots,D_m$ are (potentially distinct) input delays satisfying (without loss of generality) $0<D_1\leq \cdots\leq D_m$, and $f:\mathbb{R}^n\times\mathbb{R}^m\to\mathbb{R}^n$ is a locally Lipschitz vector field that satisfies $f(0,0,\ldots,0)=0$. The predictor feedback controllers are defined by
\begin{eqnarray}
U_i(t)&=&\kappa_i\left(P_i(t)\right)\label{con1},\quad i=1,\ldots,m,\label{contr1}
\end{eqnarray}
where $\kappa_i:\mathbb{R}^n\to\mathbb{R}$, $i=1,\ldots,m$, are continuously differentiable feedback laws with $\kappa_i(0)=0$, $i=1,\ldots,m$, and, $P_i$ are the $D_i$-time units ahead predictors of $X$, for all $i=1,\ldots,m$. Defining $D_{ji}=D_j-D_i$, for all $i\leq j\leq m$, the predictors are given by
\begin{eqnarray}
P_1(t)&=&X(t)+\int_{t-D_1}^tf\left(P_1(\theta),U_1(\theta),U_2(\theta-D_{21}),\ldots,U_m(\theta-D_{m1})\right)d\theta\label{P11}\\
P_2(t)&=&P_1(t)+\int_{t-D_{21}}^tf\left(P_2(\theta),\kappa_1\left(P_2(\theta)\right),U_2(\theta),U_3(\theta-D_{32}),\ldots,U_m(\theta-D_{m2})\right)d\theta\label{conn}\\
&\vdots&\nonumber\\
P_m(t)&=&P_{m-1}(t)+\int_{t-D_{mm-1}}^tf\left(P_m(\theta),\kappa_1\left(P_m(\theta)\right),\kappa_2\left(P_m(\theta)\right),\ldots,\kappa_{m-1}\left(P_m(\theta)\right),U_m(\theta)\right)d\theta\label{conn1},
\end{eqnarray}
with initial conditions for the integral equations (\ref{P11})--(\ref{conn1})
\begin{eqnarray}
P_1(\theta)&=&X(0)+\int_{-D_1}^{\theta}f\left(P_1(s),U_1(s),U_2(s-D_{21}),\ldots,U_m(s-D_{m1})\right)ds,\quad-D_1\leq\theta\leq0\label{P11in}\\
P_2(\theta)&=&P_1(0)+\int_{-D_{21}}^{\theta}f\left(P_2(s),\kappa_1\left(P_2(s)\right),U_2(s),U_3(s-D_{32}),\ldots,U_m(s-D_{m2})\right)ds,\nonumber\\
&& -D_{21}\leq\theta\leq0\label{connin}\\
&\vdots&\nonumber\\
P_m(\theta)&=&P_{m-1}(0)+\int_{-D_{mm-1}}^{\theta}f\left(P_m(s),\kappa_1\left(P_m(s)\right),\kappa_2\left(P_m(s)\right),\ldots,\kappa_{m-1}\left(P_m(s)\right),U_m(s)\right)ds,\nonumber\\
&&-D_{mm-1}\leq\theta\leq0\label{conn1in}.
\end{eqnarray}

We show that $P_i$, for all $i=1,\ldots,m$, are the $D_i$-time units ahead predictors of $X$ by induction. In order to better understand the general induction step we provide two initial steps. We show first that $P_1$ and $P_2$ are the $D_1$- and $D_2$-time units ahead predictors of $X$, respectively. 

\subsubsection*{Step 1}
We perform the change of variables $t=\theta+D_1$, for all $\theta\geq -D_1$, in (\ref{plant}) and define $P_1(\theta)=X(\theta+D_1)$, $\theta\geq -D_1$, to arrive at
\begin{eqnarray}
\frac{dP_1(\theta)}{d\theta}=f\left(P_1(\theta),U_1(\theta),U_2(\theta-D_{21}),\ldots,U_m(\theta-D_{m1})\right),\quad \mbox{for all $\theta\geq -D_1$}.\label{odep1}
\end{eqnarray}
Integrating (\ref{odep1}) from $\theta=t-D_1$ to $\theta=t$ and using definition $P_1(\theta)=X(\theta+D_1)$ we get (\ref{P11}). Integrating (\ref{odep1}) from $\theta=-D_1$ to any $\theta\leq0$ and using definition $P_1(-D_1)=X(0)$ we get (\ref{P11in}).

\subsubsection*{Step 2}
Performing the change of variables $\theta=s+D_{21}$, for all $s\geq -D_{21}$, in (\ref{odep1}) and defining $P_2(s)=P_1(s+D_{21})=X(s+D_2)$, for all $s\geq-D_{21}$, we get that
\begin{eqnarray}
\frac{dP_2(s)}{ds}=f\left(P_2(s),\kappa_1\left(P_2(s)\right),U_2(s),U_3(s-D_{32}),\ldots,U_m(s-D_{m2})\right),\quad \mbox{for all $s\geq -D_{21}$},\label{odep2}
\end{eqnarray}
where we also used the fact that $U_1(s+D_{21})=\kappa_1\left(P_1(s+D_{21})\right)$, for all $s+D_{21}\geq0$, and definition  $P_2(s)=P_1(s+D_{21})$. Integrating (\ref{odep2}) from $s=t-D_{21}$ to $s=t$ and using definition $P_2(s)=P_1(s+D_{21})$, for all $s\geq-D_{21}$, we arrive at (\ref{conn}).  Integrating (\ref{odep2}) from $\theta=-D_{21}$ to any $\theta\leq0$ and using definition $P_2(-D_{21})=P_1(0)$ we get (\ref{connin}).

\subsubsection*{Step j}
Assume now that the $D_j$-time units ahead predictor of $X$, namely $P_j$, satisfies the following ODE in $r$
\begin{eqnarray}
\frac{dP_j(r)}{dr}&=&f\left(P_j(s),\kappa_1\left(P_j(r)\right),\ldots,\kappa_{j-1}\left(P_j(r)\right),U_j(r),U_{j+1}(r-D_{j+1j}),\ldots,U_m(r-D_{mj})\right),\nonumber\\
&& \mbox{for all $r\geq -D_{jj-1}$}.\label{odepj}
\end{eqnarray}
Performing the change of variables $r=h+D_{j+1j}$, for all $h\geq -D_{j+1j}$, in (\ref{odepj}) and defining $P_{j+1}(h)=P_j(h+D_{j+1j})=X(h+D_{j+1})$, for all $h\geq-D_{j+1j}$, we get that
\begin{eqnarray}
\frac{dP_{j+1}(h)}{dh}&=&f\left(P_{j+1}(h),\kappa_1\left(P_{j+1}(h)\right),\kappa_j\left(P_{j+1}(h)\right),U_{j+1}(h),\ldots,U_m(h-D_{mj+1})\right),\nonumber\\
&& \mbox{for all $h\geq -D_{j+1j}$},\label{odeph}
\end{eqnarray}
where we also used the fact that $U_j(h+D_{j+1j})=\kappa_j\left(P_j(h+D_{j+1j})\right)$, for all $h+D_{j+1j}\geq0$, and definition $P_{j+1}(h)=P_j(h+D_{j+1j})$. Integrating (\ref{odeph}) from $h=t-D_{j+1j}$ to $h=t$ and from $h=-D_{j+1j}$ to any $h\leq0$, and using definition $P_{j+1}(h)=P_j(h+D_{j+1j})$, for all $h\geq-D_{j+1j}$ (which implies that  $P_{j+1}(-D_{j+1j})=P_j(0)$), we conclude that indeed the $D_i$-time units ahead predictors of $X$, for all $i=1,\ldots,m$, are given by (\ref{P11})--(\ref{conn1}) with initial conditions (\ref{P11in})--(\ref{conn1in}).

\section{Lyapunov-Based Stability Analysis Under Predictor Feedback}
\label{sec3}
\begin{assumption}
\label{ass1}
The system $\dot{X}=f\left(X,\omega_1,\ldots,\omega_m\right)$ is strongly forward complete with respect to $\omega=\left(\omega_1,\ldots,\omega_m\right)^T$.
\end{assumption}
\begin{assumption}
\label{ass3}
The system $\dot{X}=f\left(X,\omega_1+\kappa_1(X),\ldots,\omega_m+\kappa_m(X)\right)$ is input-to-state stable with respect to $\omega=\left(\omega_1,\ldots,\omega_m\right)^T$.
\end{assumption}
\begin{assumption}
\label{ass2}
The systems $\dot{X}=g_j\left(X,\omega_{j+1},\ldots,\omega_m\right)$, for all $j=1,\ldots,m-1$, with $g_j\left(X,\omega_{j+1},\ldots,\omega_m\right)=f\left(X,\kappa_1\left(X\right),\ldots,\kappa_{j}\left(X\right),\omega_{j+1},\ldots,\omega_m\right)$, are strongly forward complete with respect to $\omega=\left(\omega_{j+1},\ldots,\omega_m\right)^T$.
\end{assumption}

The definitions of strong forward completeness and input-to-state stability are those from \cite{krstic} (see also \cite{angeli} for the definition of standard forward completeness which differs from strong forward completeness in that $f\left(0,0,\ldots,0\right)=0$) and \cite{sontag}, respectively. 

Assumption \ref{ass1} guarantees that for every initial condition and every locally bounded input signal the corresponding solution is defined for all $t\geq0$. In particular, the plant does not exhibit finite escape before the first feedback control reaches it. This is a natural requirement for achieving global stabilization in the presence of arbitrary large delays affecting the inputs of a system. Assumption \ref{ass3} can be relaxed to only global asymptotic stability of system $\dot{X}=f\left(X,\kappa_1(X),\ldots,\kappa_m(X)\right)$. Yet, at the expense of not having a Lyapunov functional available. Assumption \ref{ass2} guarantees that after the $j$-th controller ``kicks in" and the $D_j$-th delay is compensated, and hence, the plant behaves according to $\dot{X}=f\left(X,\kappa_1(X),\ldots,\kappa_j\left(X\right),U_{j+1}(t-D_{j+1}),\ldots,U_m(t-D_m)\right)$, the solutions are also well-defined. In particular, the plant does not exhibit finite escape before the $j+1$-th feedback control reaches it and after the $j$-th feedback control has already reached the plant. Note that Assumption \ref{ass2} can be relaxed to strong forward completeness of systems $\dot{X}=g_j\left(X,\omega_{j+1},\ldots,\omega_m\right)$ with respect to $\omega=\left(\omega_{j+1},\ldots,\omega_m\right)^T$, for all $j\in\left\{r_1,r_1+r_2,\ldots,r_1+\ldots+r_{\nu}\right\}$, where $g_j\left(X,\omega_{j+1},\ldots,\omega_m\right)=f\left(X,\kappa_1\left(X\right),\ldots,\kappa_{j}\left(X\right),\omega_{j+1},\ldots,\omega_m\right)$, $r_1$ denotes the number of delays that are equal to $D_1$, $r_{\sigma}$, $\sigma=2,\ldots,\nu$, denotes the number of delays that are equal to $D_{r_1+\ldots+r_{\sigma-1}}$, and $\nu$ is the number of distinct delays. In particular, when all delays are identical, Assumption \ref{ass2} can be completely removed.


The stability proof is based on an equivalent representation of plant (\ref{plant}), using transport PDEs for the actuator states, and on an equivalent PDE representation of the predictor states (\ref{P11})--(\ref{conn1}). We present the alternative representations for the plant and the predictor states before stating and proving the main result of this section, since the reader might find the alternative formalisms helpful in better digesting the design and analysis ideas of our methodology.
\subsection{Equivalent Representation of the Plant Using Transport PDEs for the Actuator States}
System (\ref{plant}) can be written equivalently as 
\begin{eqnarray}
\dot{X}(t)&=&f\left(X(t),u_1(0,t),\ldots,u_m(0,t)\right)\label{plant1}\\
\partial_t u_i(x,t)&=&\partial_x u_i(x,t),\quad x\in(0,D_i),\quad i=1,\ldots,m\label{pde1}\\
u_i\left(D_i,t\right)&=&U_i(t),\quad i=1,\ldots,m\label{pde2}.
\end{eqnarray}
To see this note that the solutions to (\ref{pde1}), (\ref{pde2}) are given by 
\begin{eqnarray}
u_i(x,t)=U_i(t+x-D_i),\quad x\in[0,D_i],\quad i=1,\ldots,m.\label{solu}
\end{eqnarray}
\subsection{Transport PDE Representation of the Predictor States}
\label{sudpre}
The predictor states $P_1(\theta)$, for all $\theta\geq -D_1$, and $P_j(\theta)$, for all $\theta\geq -D_{jj-1}$ and $j=2,\ldots,m$, can be written equivalently as (see Fig. \ref{figdiag})
\begin{figure}
\centering
\includegraphics[width=\linewidth]{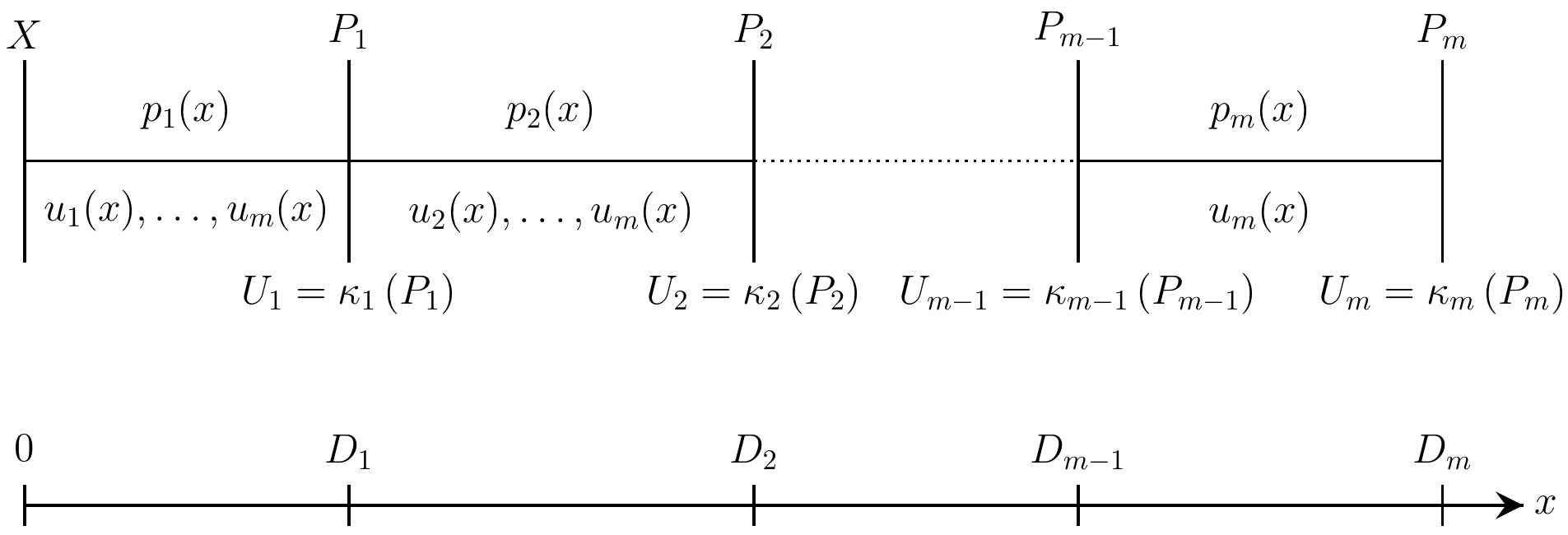}
\caption{The $D_j$-time units ahead predictors of the state $X$, namely $P_j$, given in (\ref{P11})--(\ref{conn1}), and their equivalent representation by the PDE states $p_j$, given in (\ref{p1})--(\ref{pm}), based on the transport-PDE equivalent of the actuator states defined in (\ref{pde1})--(\ref{pde2}). The control laws $U_j$ are defined in (\ref{contr1}) in terms of $P_j$ and can be written equivalently as in (\ref{connew1}) in terms of $p_j$.}
\label{figdiag}
\end{figure}
\begin{eqnarray}
p_1(x,t)&=&X(t)+\int_0^xf\left(p_1(y,t),u_1(y,t),\ldots,u_m(y,t)\right)dy,\quad x\in[0,D_1]\label{p1}\\
p_2(x,t)&=&p_1\left(D_1,t\right)+\int_{D_1}^xf\left(p_2(y,t),\kappa_1\left(p_2(y,t)\right),u_2(y,t),\ldots,u_m(y,t)\right)dy,\quad x\in[D_1,D_2]\label{p2}\\
&\vdots&\nonumber\\
p_m(x,t)&=&p_{m-1}\left(D_{m-1},t\right)+\int_{D_{m-1}}^xf\left(p_m(y,t),\kappa_1\left(p_m(y,t)\right),\ldots,\kappa_{m-1}\left(p_m(y,t)\right),u_m(y,t)\right)dy,\nonumber\\
&& x\in[D_{m-1},D_m].\label{pm}
\end{eqnarray}
We show this by induction. In order to make the presentation of the procedure clearer we present two steps before the general step. We first observe that (see Section \ref{sec2}) $P_1(\theta)=X(\theta+D_1)$, for all $\theta\geq -D_1$, and $P_j(\theta)=X(\theta+D_j)$, for all $\theta\geq-D_{jj-1}$ and $j=2,\ldots,m$. The functions $p_i$, $i=1,\ldots,m$, satisfy the following ODEs in $x$
\begin{eqnarray}
\partial_xp_1(x,t)&=&f\left(p_1(x,t),u_1(x,t),\ldots,u_m(x,t)\right),\quad x\in[0,D_1]\label{p1ode}\\
\partial_xp_2(x,t)&=&f\left(p_2(x,t),\kappa_1\left(p_2(x,t)\right),u_2(x,t),\ldots,u_m(x,t)\right),\quad x\in[D_1,D_2]\label{p2ode}\\
&\vdots&\nonumber\\
\partial_xp_m(x,t)&=&f\left(p_m(x,t),\kappa_1\left(p_m(x,t)\right),\ldots,\kappa_{m-1}\left(p_m(x,t)\right),u_m(x,t)\right),\quad x\in[D_{m-1},D_m],\label{pmode}
\end{eqnarray}
with initial conditions
\begin{eqnarray}
p_1(0,t)&=&X(t)\label{p1b}\\
p_2(D_1,t)&=&p_1(D_1,t)\label{p2b}\\
&\vdots&\nonumber\\
p_m(D_{m-1},t)&=&p_{m-1}(D_{m-1},t).\label{pmb}
\end{eqnarray}

\subsubsection*{Step 1}
The solution to (\ref{p1ode}), (\ref{p1b}) is 
\begin{eqnarray}
p_1(x,t)=X(t+x),\quad x\in[0,D_1].\label{sol1}
\end{eqnarray}
In order to show this, first note that (\ref{sol1}) satisfies the boundary condition (\ref{p1b}). The function $X(t+x)$ also satisfies the ODE in $x$ (\ref{p1ode}) which follows from the fact that by (\ref{plant}) one can conclude that
\begin{eqnarray}
X'(t+x)=f\left(X(t+x),U_1(t+x-D_1),\ldots,U_m(t+x-D_m)\right),\quad\mbox{for all $t\geq0$ and $0\leq x\leq D_1$}.
\end{eqnarray}
The result follows from the uniqueness of solutions to the ODE (\ref{plant}). Therefore, by defining 
\begin{eqnarray}
p_1(D_1,t)&=&P_1(t),\label{25}
\end{eqnarray}
and using the fact that $p_1$ is a function of one variable, namely $x+t$ (which follows from (\ref{sol1})), one can conclude that
\begin{eqnarray}
P_1(t+x-D_1)&=&p_1(x,t),\quad x\in[0,D_1]\label{defP}.
\end{eqnarray}
Performing the change of variables $x=\theta+D_1-t$, for all $t-D_1\leq\theta\leq t$, in (\ref{p1}) and using (\ref{solu}), (\ref{p1b}), and (\ref{defP}) we arrive at
\begin{eqnarray}
P_1(\theta)&=&X(t)+\int_{t-D_1}^{\theta}f\left(P_1(s),U_1(s),\ldots,U_m(s-D_{m1})\right)ds,\quad\mbox{for all $t-D_1\leq\theta\leq t$}.
\end{eqnarray}

\subsubsection*{Step 2}
Similarly, it can be shown that  
\begin{eqnarray}
p_2(x,t)=X(t+x),\quad x\in[D_1,D_2],\label{sol2}
\end{eqnarray}
is the solution to (\ref{p2ode}), (\ref{p2b}) since it satisfies (\ref{p2b}) and since it satisfies the ODE in $x$ (\ref{p2ode}) which follows from the fact that the function $X(t+x)$ satisfies 
\begin{eqnarray}
X'(t+x)&=&f\left(X(t+x),\kappa_1\left(X(t+x)\right),U_2(t+x-D_2),\ldots,U_m(t+x-D_m)\right),\nonumber\\
&&\mbox{for all $t\geq0$ and $D_1\leq x\leq D_2$},
\end{eqnarray}
where we also used the fact that $U_1(t+x-D_1)=\kappa_1\left(P_1(t+x-D_1)\right)=\kappa_1\left(X(t+x)\right)$, for all $x\geq D_1$. Defining 
\begin{eqnarray}
p_2(D_2,t)&=&P_2(t),
\end{eqnarray}
and using the fact that $p_2$ is a function of one variable, namely $x+t$ (which follows from (\ref{sol2})), one can conclude that
\begin{eqnarray}
P_2(t+x-D_2)&=&p_2(x,t),\quad x\in[D_1,D_2]\label{defP2}.
\end{eqnarray}
Performing the change of variables $x=\theta+D_2-t$, for all $t-D_{21}\leq\theta\leq t$, in (\ref{p2}) and using (\ref{solu}), (\ref{25}), and (\ref{defP2}) we arrive at
\begin{eqnarray}
P_2(\theta)&=&P_1(t)+\int_{t-D_{21}}^{\theta}f\left(P_2(s),\kappa_1\left(P_2(s)\right),U_2(s),U_3(s-D_{32}),\ldots,U_m(s-D_{m2})\right)ds,\nonumber\\
&&\mbox{for all $t-D_{21}\leq\theta\leq t$}.
\end{eqnarray}

\subsubsection*{Step j}
In general, assume that for some $j$
\begin{eqnarray}
p_j(x,t)&=&P_j(t+x-D_j)\nonumber\\
&=&X(t+x),\quad x\in[D_{j-1},D_j]\label{solj}.
\end{eqnarray}
We show next that $p_{j+1}(x,t)=X(t+x)$, for all $x\in[D_j,D_{j+1}]$. We first observe that $p_{j+1}(D_j,t)=X(t+D_j)=p_j(D_{j},t)$. Moreover, the function $p_{j+1}(x,t)=X(t+x)$, for all $x\in[D_j,D_{j+1}]$, satisfies the following ODE in $x$
\begin{eqnarray}
\partial_xp_{j+1}(x,t)&=&f\left(p_{j+1}(x,t),\kappa_1\left(p_{j+1}(x,t)\right),\ldots,\kappa_{j}\left(p_{j+1}(x,t)\right),u_{j+1}(x,t),\ldots,u_m(x,t)\right),\nonumber\\
&&\quad x\in[D_{j},D_{j+1}],\label{pm1ode}
\end{eqnarray}
since the following holds
\begin{eqnarray}
X'(t+x)&=&f\left(X(t+x),\kappa_1\left(X(t+x)\right),\ldots,\kappa_j\left(X(t+x)\right),\right.\nonumber\\
&&\left.U_{j+1}(t+x-D_{j+1}),\ldots,U_m(t+x-D_m)\right),\quad\mbox{for all $t\geq0$ and $D_j\leq x\leq D_{j+1}$},
\end{eqnarray}
where we also used the fact that $U_i(t+x-D_i)=P_i(t+x-D_i)=X(t+x)$, for all $x\geq D_j$ and $i\leq j$. By defining 
\begin{eqnarray}
p_{j+1}(D_{j+1},t)&=&P_{j+1}(t),
\end{eqnarray}
once can conclude that 
\begin{eqnarray}
P_{j+1}(t+x-D_{j+1})&=&p_{j+1}(x,t),\quad x\in[D_{j},D_{j+1}].
\end{eqnarray}
Performing the change of variables $x=\theta+D_{j+1}-t$, for all $t-D_{j+1}\leq\theta\leq t$, we arrive at
\begin{eqnarray}
P_{j+1}(\theta)&=&P_{j}(t)+\int_{t-D_{j+1j}}^{\theta}f\left(P_{j+1}(s),\kappa_1\left(P_{j+1}(s)\right),\ldots,\kappa_{j}\left(P_j(s)\right),U_{j+1}(s),\right.\nonumber\\
&&\left.U_{j+2}(s-D_{j+2j+1}),\ldots,U_m(s-D_{mj+1})\right)ds,\quad\mbox{for all $t-D_{jj+1}\leq\theta\leq t$},
\end{eqnarray}
which completes the proof.

Note that with this representation we have that
\begin{eqnarray}
U_i(t)&=&\kappa_i\left(p_i(D_i,t)\right),\quad i=1,\ldots,m\label{connew1}.
\end{eqnarray}

\subsection{Main Result and its Proof}
\begin{theorem}
\label{theorem1}
Consider the closed-loop system consisting of the plant (\ref{plant1})--(\ref{pde2}) and the control laws (\ref{connew1}), (\ref{p1})--(\ref{pm}). Under Assumptions \ref{ass1}, \ref{ass3}, and \ref{ass2}, there exists a class $\mathcal{KL}$ function $\beta$ such that for all initial conditions $X_0\in\mathbb{R}^n$ and $u_{i_0}\in C[0,D_i]$, $i=1,\ldots,m$, which are compatible with the feedback laws, the closed-loop system has a unique solution $X(t)\in C^1[0,\infty)$ and $u_i(x,t)\in C\left([0,D_i]\times [0,\infty)\right)$, $i=1,\ldots,m$, and the following holds
\begin{eqnarray}
\Xi(t)\leq{\beta}\left(\Xi(0),t\right), \quad\mbox{for all $t\geq0$},\label{estimate theorem 1}
\end{eqnarray}
where
\begin{eqnarray}
\Xi(t)=\left|X(t)\right|+\sum_{i=1}^{i=m}\|u_i(t)\|_{\infty}.\label{norm}
\end{eqnarray}
\end{theorem}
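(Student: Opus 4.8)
The plan is to exploit the central structural fact established in Sections \ref{sec2} and \ref{sudpre}: each delay $D_i$ is compensated after finite time, so that for $t\geq D_i$ the closed-loop dynamics become progressively ``cleaner.'' Concretely, I would argue on the nested time intervals $[0,D_1]$, $[D_1,D_2]$, $\ldots$, $[D_{m-1},D_m]$, $[D_m,\infty)$. On $[0,D_1]$ all inputs reaching the plant are the open-loop initial actuator data $u_{i_0}$, which are continuous and hence bounded; Assumption \ref{ass1} (strong forward completeness) then gives a bound on $|X(t)|$ on $[0,D_1]$ of the form $|X(t)|\leq \alpha_1\!\left(|X_0|+\sum_i\|u_{i_0}\|_\infty\right)$ for some class $\mathcal{K}_\infty$ function, and in particular no finite escape. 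Since $p_1$ is built from $X$ and the $u_i$ through the (locally Lipschitz) integral equations \eqref{p1}--\eqref{pm}, and the $U_i$ are obtained from $p_i(D_i,\cdot)$ via the $C^1$ maps $\kappa_i$, the controls $U_i(t)$ for $t\in[0,D_1]$ — hence the actuator states at time $D_1$ — are bounded by a class $\mathcal{K}_\infty$ function of $\Xi(0)$. This also yields local existence/uniqueness of the solution via the standard step (method-of-steps) argument on each delay interval, using local Lipschitzness of $f$ and $\kappa_i$ and continuity of the initial data, so the regularity claims $X\in C^1$, $u_i\in C$ follow en route.

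Next I would iterate: having controlled $\Xi$ at time $D_1$, on $[D_1,D_2]$ the state obeys $\dot X=f(X,\kappa_1(X),U_2(t-D_2),\ldots,U_m(t-D_m))=g_1(X,U_2(t-D_2),\ldots,U_m(t-D_m))$ with the tail inputs still equal to (shifted) bounded initial actuator data, so Assumption \ref{ass2} (strong forward completeness of $\dot X=g_1(\cdot)$) bounds $|X(t)|$ on $[D_1,D_2]$, and then the predictor integral equations again bound the remaining controls and hence $\Xi(D_2)$. Repeating through all intervals, and invoking the generalized form of Assumption \ref{ass2} for the distinct-delay values, I reach time $D_m$ with a bound $\Xi(D_m)\leq \alpha_m(\Xi(0))$ for some $\alpha_m\in\mathcal{K}_\infty$. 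For $t\geq D_m$, every delay has been compensated, so $\dot X=f(X,\kappa_1(X),\ldots,\kappa_m(X))$, and it remains to propagate decay of $\Xi$.

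For the final interval I would use Assumption \ref{ass3}: writing the actuator states as $u_i(x,t)$, note that for $t\geq D_m$ one has $u_i(x,t)=U_i(t+x-D_i)=\kappa_i(X(t+x-D_i+D_i))=\kappa_i(X(t+x))$ only in the limiting autonomous regime; more carefully, $u_i(x,t)=\kappa_i(p_i(D_i,t+x-D_i))$ and the predictor states $p_i$ are exactly $X$ evaluated ahead in time, so the actuator norms $\|u_i(t)\|_\infty$ are bounded by $\sup_{s\in[t,t+D_m]}|X(s)|$ up to class-$\mathcal{K}$ factors coming from $\kappa_i$. Thus it suffices to get a $\mathcal{KL}$ estimate on $|X(t)|$ for $t\geq D_m$. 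Since $\dot X=f(X,\kappa_1(X),\ldots,\kappa_m(X))$ is ISS with respect to zero disturbance — equivalently GAS by Assumption \ref{ass3} — there is $\beta_0\in\mathcal{KL}$ with $|X(t)|\leq\beta_0(|X(D_m)|,t-D_m)$; combined with the causal bound on $|X|$ over the look-ahead window $[t,t+D_m]$ (itself controlled by $\beta_0(|X(D_m)|,\cdot)$, using $f(X,\kappa(X))$ GAS hence with bounded-reachable-set property), one obtains $\Xi(t)\leq\hat\beta(\Xi(D_m),t-D_m)$ for a new $\mathcal{KL}$ function. Chaining with the finite-horizon $\mathcal{K}_\infty$ bound on $[0,D_m]$ and absorbing the finite shift $D_m$ into the $\mathcal{KL}$ argument produces the desired $\beta$ and the estimate \eqref{estimate theorem 1}.

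The main obstacle I anticipate is the bookkeeping in the last step: translating the ISS/GAS estimate on $|X(t)|$ into an estimate on $\Xi(t)$ that includes the actuator norms $\|u_i(t)\|_\infty$, because each $u_i(\cdot,t)$ encodes future predictor values $p_i$, i.e.\ values of $X$ on $[t,t+D_i]$, so one needs a ``finite-horizon forward'' bound on $|X|$ consistent with the backward-in-time $\mathcal{KL}$ decay — this requires care to ensure the composition of class-$\mathcal{KL}$ and class-$\mathcal{K}_\infty$ functions (from $\kappa_i$ and from the Lipschitz predictor maps) still yields a class-$\mathcal{KL}$ bound, and that the constants are uniform in the initial data. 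A secondary, more routine obstacle is verifying on each delay subinterval that the predictor integral equations \eqref{p1}--\eqref{pm} have well-defined continuous solutions and that $p_i(D_i,t)$ depends continuously on $(X(t),u_1(t),\ldots,u_m(t))$, which is where local Lipschitzness of $f$ together with a Gr\"onwall argument on each finite spatial interval $[D_{i-1},D_i]$ is used; this is standard but must be stated to justify both existence/uniqueness and the $\mathcal{K}_\infty$ propagation bounds.
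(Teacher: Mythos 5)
Your argument is essentially correct, but it is not the route the paper takes for Theorem~\ref{theorem1}: what you describe is, almost step for step, the ``estimates on solutions'' proof that the paper gives for Theorem~\ref{theorem2} in Section~\ref{sec4} (interval-by-interval bounds on $[0,D_1],\ldots,[D_{m-1},D_m]$ via strong forward completeness and the identity $P_i(t)=X(t+D_i)$, then a $\mathcal{KL}$ decay after $t=D_m$ from stability of the nominal closed loop, then translation of the bound on $|X|$ into bounds on the input histories). Since Assumption~\ref{ass3} implies global asymptotic stability of $\dot X=f(X,\kappa_1(X),\ldots,\kappa_m(X))$, and since $\|u_i(t)\|_\infty=\sup_{t-D_i\le\theta\le t}|U_i(\theta)|$ by (\ref{solu}), this route does deliver the estimate (\ref{estimate theorem 1}) in the norm (\ref{norm}), so the proposal is sound. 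The paper's actual proof of Theorem~\ref{theorem1} is different: it introduces the backstepping transformations (\ref{eq2})--(\ref{bacmnew}) of the actuator states (Lemma~\ref{leminvz}), maps the closed loop to the target system (\ref{plant1tr})--(\ref{pde2tr}), builds an explicit Lyapunov functional from the ISS--Lyapunov function of Assumption~\ref{ass3} and the weighted sup-norms of the $w_i$ (Lemma~\ref{lemcl}), and then converts between the original and transformed norms via the Gr\"onwall-type bounds on $p_i$ and $\pi_i$ (Lemmas~\ref{lemmabdirect}--\ref{lemmaeq}); this is what Assumption~\ref{ass3} (ISS rather than mere GAS) buys, namely an explicit Lyapunov functional for the closed loop, at the cost of the extra norm-equivalence machinery. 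One point in your version that needs more care than you give it: bootstrapping $\Xi(D_j)$ interval by interval is slightly circular, because $\|u_i(D_j)\|_\infty$ encodes $U_i(\theta)=\kappa_i(X(\theta+D_i))$ for $\theta\in[D_j-D_i,D_j]$, i.e.\ values of $X$ up to time $D_j+D_i>D_j$; the clean fix, used in the paper's proof of Theorem~\ref{theorem2}, is to first establish the bound on $|X(t)|$ for \emph{all} $t\ge0$ (finite-horizon forward-completeness bounds concatenated with the $\mathcal{KL}$ tail, as in (\ref{es5li})) and only afterwards estimate each $\sup_{t-D_i\le\theta\le t}|U_i(\theta)|$ from it.
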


\begin{corollary}[The version of Theorem \ref{theorem1} in standard delay notation]
\label{cor1}
Consider the closed-loop system consisting of the plant (\ref{plant}) and the control laws (\ref{contr1})--(\ref{conn1in}). Under Assumptions \ref{ass1}, \ref{ass3}, and \ref{ass2}, the following holds
\begin{eqnarray}
\left|X(t)\right|+\sum_{i=1}^{i=m}\sup_{t-D_i\leq\theta\leq t}|U_i(\theta)|\leq{\beta}\left(\left|X(0)\right|+\sum_{i=1}^{i=m}\sup_{-D_i\leq\theta\leq 0}|U_i(\theta)|,t\right), \quad\mbox{for all $t\geq0$}.\label{estimate cor 1}
\end{eqnarray}
\end{corollary}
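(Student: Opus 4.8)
The plan is to establish Theorem~\ref{theorem1} in three stages---well-posedness, a backstepping change of variables, and a Lyapunov-plus-transport estimate---and then to read off Corollary~\ref{cor1}, since by (\ref{solu}) one has $\|u_i(t)\|_\infty=\sup_{\theta\in[t-D_i,t]}|U_i(\theta)|$ and likewise at $t=0$, so (\ref{estimate cor 1}) is exactly (\ref{estimate theorem 1}) in delay notation. For well-posedness I would first observe that the interconnection (\ref{plant1})--(\ref{pde2}), (\ref{connew1}), (\ref{p1})--(\ref{pm}) is causal: the transport states are $u_i(x,t)=U_i(t+x-D_i)$ with $U_i(s)=\kappa_i(p_i(D_i,s))$ for $s\ge0$, the predictors $p_i(\cdot,t)$ are obtained from $X(t)$ and $u_1(\cdot,t),\dots,u_m(\cdot,t)$ by the Volterra equations (\ref{p1})--(\ref{pm}), and no instantaneous algebraic loop appears because each $U_i(t)$ depends on the $u_j(\cdot,t)$ only through their values on intervals of length $<D_i\le D_j$, i.e.\ through $U_j(s)$ with $s<t$. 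A method-of-steps argument using local Lipschitzness of $f$ and of the $\kappa_i$ then gives a unique local solution, and Assumption~\ref{ass1} rules out finite escape on $[0,D_1]$ while Assumption~\ref{ass2} does so on $[D_1,D_2],\dots,[D_{m-1},D_m]$ and on $[D_m,\infty)$, producing the global solution with $X\in C^1[0,\infty)$ and $u_i\in C([0,D_i]\times[0,\infty))$.

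\emph{Backstepping transformation.} The core step is the change of variables
\[
w_i(x,t)=u_i(x,t)-\kappa_i\big(\pi(x,t)\big),\qquad x\in[0,D_i],\quad i=1,\dots,m,
\]
where $\pi(x,t)$ is the concatenation of $p_1(\cdot,t),\dots,p_m(\cdot,t)$ over $[0,D_1],[D_1,D_2],\dots,[D_{m-1},D_m]$, which by Section~\ref{sudpre} equals $X(t+x)$ and therefore satisfies $\partial_t\pi=\partial_x\pi$. Differentiating (on the explicit characteristic solution, to accommodate the limited regularity) and using $\partial_t u_i=\partial_x u_i$ gives $\partial_t w_i=\partial_x w_i$, while the control law (\ref{connew1}) together with $p_i(D_i,t)=\pi(D_i,t)$ gives the homogeneous boundary condition $w_i(D_i,t)=0$. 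I would also record the inverse map $u_i=w_i+\kappa_i(\pi)$, with $\pi(\cdot,t)$ reconstructed from $X(t)$ and $w(\cdot,t)$ by integrating the triangular ODE in $x$ that $\pi$ satisfies, so that estimates transfer both ways between $(X,u)$ and $(X,w)$. Two elementary facts about $\partial_t w_i=\partial_x w_i$ with $w_i(D_i,\cdot)=0$ will be used directly: $\|w_i(t)\|_\infty$ is nonincreasing in $t$ and $w_i(\cdot,t)\equiv 0$ for $t\ge D_i$, hence $\|w_i(t)\|_\infty\le\beta_i\big(\|w_i(0)\|_\infty,t\big)$ for some class-$\mathcal{KL}$ $\beta_i$; moreover, using Assumptions~\ref{ass1}--\ref{ass2} to bound $\sup_x|\pi(x,0)|$, $\|w_i(0)\|_\infty$ is bounded by a class-$\mathcal{K}_\infty$ function of $\Xi(0)$.

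\emph{Lyapunov functional and conclusion.} In the new variables the plant reads $\dot X=f\big(X,\kappa_1(X)+w_1(0,t),\dots,\kappa_m(X)+w_m(0,t)\big)$, which by Assumption~\ref{ass3} is ISS with respect to $w(0,\cdot)$; let $V$ be an associated ISS--Lyapunov function. I would then consider
\[
\Omega(t)=V\big(X(t)\big)+\sum_{i=1}^{m}\int_0^{D_i}(1+x)\,\rho_i\big(|w_i(x,t)|\big)\,dx,
\]
with class-$\mathcal{K}_\infty$ weights $\rho_i$ chosen by the standard reshaping (built from the ISS gain of $V$) so that the boundary terms $-\rho_i(|w_i(0,t)|)$ generated by integrating $\partial_t w_i=\partial_x w_i$ against $(1+x)\rho_i'$ and using $w_i(D_i,\cdot)=0$ dominate the ISS cross term of $\dot V$; this yields $\dot\Omega\le-\mu(\Omega)$ for some class-$\mathcal{K}_\infty$ $\mu$, hence a class-$\mathcal{KL}$ bound on $\Omega(t)$, and in particular $|X(t)|\le\beta_X(\Xi(0),t)$. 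To reach the spatial-supremum norm in (\ref{norm}) I would combine this with the transport estimates above via $\|u_i(t)\|_\infty\le\|w_i(t)\|_\infty+\bar\kappa_i\big(\sup_{\tau\in[t,t+D_m]}|X(\tau)|\big)$, the monotonicity in $t$ of class-$\mathcal{KL}$ bounds, and a restart-at-$t=D_m$ (causality) argument, assembling $|X(t)|$ and the $\|u_i(t)\|_\infty$ into a single class-$\mathcal{KL}$ function $\beta$ of $(\Xi(0),t)$; Corollary~\ref{cor1} then follows as noted above.

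\emph{Main obstacle.} I expect the two delicate points to be the nonlinear-gain bookkeeping---choosing the reshaping functions $\rho_i$ (and, if necessary, pre-reshaping $V$) so that $\dot\Omega$ is negative definite for all data rather than only for small data---and reconciling the $L^2$-in-$x$ information naturally carried by $\Omega$ with the spatial supremum norm appearing in $\Xi(t)$; this is exactly why the finite-time emptying and monotonicity of the $w_i$-subsystem have to be invoked separately and cannot simply be folded into $\Omega$.
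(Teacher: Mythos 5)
Your reduction of the corollary to Theorem \ref{theorem1} via the explicit transport solution (\ref{solu}), which identifies $\|u_i(t)\|_\infty$ with $\sup_{t-D_i\leq\theta\leq t}|U_i(\theta)|$, is exactly the paper's (one-line) proof of Corollary \ref{cor1}, and your proof of Theorem \ref{theorem1} follows the paper's architecture: the piecewise backstepping transformations (\ref{eq2})--(\ref{bacmnew}) built on the predictor states $p_i(x,t)=X(t+x)$, the target system (\ref{plant1tr})--(\ref{pde2tr}), input-to-state stability of the $X$-subsystem with respect to $w(0,t)$, and the passage between $\Xi$ and $\bar\Xi$ via class-$\mathcal{K}_\infty$ bounds on the direct and inverse predictor maps (Lemmas \ref{lemmabdirect}--\ref{lemmaeq}). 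The one genuine divergence is the stability functional in the key lemma. The paper (Lemma \ref{lemcl}, following \cite{krstic}) takes $V=S(X)+\frac{2}{c}\int_0^{\sum_i\|w_i(t)\|_{c,\infty}}\alpha_4(r)r^{-1}dr$, built directly on the exponentially weighted \emph{supremum} norms, for which $\frac{d}{dt}\|w_i(t)\|_{c,\infty}\leq -c\|w_i(t)\|_{c,\infty}$; this yields the $\mathcal{KL}$ bound on $\bar\Xi(t)=|X(t)|+\sum_i\|w_i(t)\|_\infty$ in one shot and entirely avoids the sup-versus-integral mismatch you flag as the main obstacle. Your integral functional $\sum_i\int_0^{D_i}(1+x)\rho_i(|w_i(x,t)|)dx$ also closes (the boundary term $-\rho_i(|w_i(0,t)|)$ does dominate the ISS cross term after standard reshaping), but it only controls $X$ and an integral norm of $w$, so you are forced to recover $\|w_i(t)\|_\infty$ separately from monotonicity and finite-time extinction of the transport states and then reassemble everything with a restart argument; that is more elementary (no differentiation of a supremum norm) but strictly more bookkeeping, and it is precisely the step the paper's choice of norm renders unnecessary. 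Both routes are valid.
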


The proof of Theorem \ref{theorem1} is based on a series of technical lemmas which are presented next and whose proofs are provided in Appendix A. Corollary \ref{cor1} follows immediately from Theorem \ref{theorem1} by using (\ref{solu}).

\begin{lemma}
\label{leminvz}
The backstepping transformations of $u_i$, $i=1,\ldots,m$, defined by
 \begin{eqnarray}
w_1(x,t)&=&u_1(x,t)-\kappa_1\left(p_1(x,t)\right),\quad x\in[0,D_1]\label{eq2}\\
w_2(x,t)&=&u_2(x,t)-\left\{\begin{array}{ll}\kappa_2\left(p_1(x,t)\right),&x\in[0,D_1]\\\kappa_2\left(p_2(x,t)\right),&x\in[D_1,D_2]\end{array}\right.\label{bac2new}\\
&\vdots&\nonumber\\
w_m(x,t)&=&u_m(x,t)-\left\{\begin{array}{ll}\kappa_m\left(p_1(x,t)\right),&x\in[0,D_1]\\\kappa_m\left(p_2(x,t)\right),&x\in[D_1,D_2]\\\vdots\\\kappa_m\left(p_m(x,t)\right),&x\in[D_{m-1},D_m]\end{array}\right.,\label{bacmnew}
\end{eqnarray}
where $p_i$, $i=1,\ldots,m$, are defined in (\ref{p1})--(\ref{pm}), together with the control laws (\ref{connew1}), (\ref{p1})--(\ref{pm}), transform system (\ref{plant1})--(\ref{pde2}) to the following ``target system"
\begin{eqnarray}
\dot{X}(t)&=&f\left(X(t),w_1(0,t)+\kappa_1\left(X(t)\right),\ldots,w_m(0,t)+\kappa_m\left(X(t)\right)\right)\label{plant1tr}\\
\partial_t w_i(x,t)&=&\partial_x w_i(x,t),\quad x\in(0,D_i),\quad i=1,\ldots,m\label{pde1tr}\\
w_i\left(D_i,t\right)&=&0,\quad i=1,\ldots,m\label{pde2tr}.
\end{eqnarray}
\end{lemma}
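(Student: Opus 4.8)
The plan is to verify the three defining relations of the target system one at a time: the reduced state ODE (\ref{plant1tr}), the transport PDEs (\ref{pde1tr}), and the homogeneous boundary conditions (\ref{pde2tr}). The state equation and the boundary conditions will fall out directly from the definitions (\ref{eq2})--(\ref{bacmnew}) together with the feedback law written in the form (\ref{connew1}); the transport equations carry the real content and will follow from the transport property of the predictor states $p_k$ derived in Section \ref{sudpre}.

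I would begin with the easy pieces. For the boundary conditions, on the last subinterval $x\in[D_{i-1},D_i]$ the transformation (\ref{eq2})--(\ref{bacmnew}) reads $w_i(x,t)=u_i(x,t)-\kappa_i(p_i(x,t))$, so evaluating at $x=D_i$ and using (\ref{pde2}) gives $w_i(D_i,t)=U_i(t)-\kappa_i(p_i(D_i,t))$, which vanishes by (\ref{connew1}). For the state equation, note that $0\in[0,D_1]$, so at $x=0$ every $w_i$ uses its first branch, $w_i(0,t)=u_i(0,t)-\kappa_i(p_1(0,t))=u_i(0,t)-\kappa_i(X(t))$ by the interface condition (\ref{p1b}); substituting $u_i(0,t)=w_i(0,t)+\kappa_i(X(t))$ into (\ref{plant1}) produces exactly (\ref{plant1tr}).

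The heart of the proof is (\ref{pde1tr}). By Section \ref{sudpre}, $p_k(x,t)=X(t+x)$ on $x\in[D_{k-1},D_k]$ (with $D_0:=0$) for $k=1,\ldots,m$, so each $p_k$ solves the transport equation $\partial_t p_k(x,t)=\partial_x p_k(x,t)$ on its interval, and the matching conditions (\ref{p1b})--(\ref{pmb}) make the $p_k$'s agree at the grid points $x=D_k$, whence $w_i\in C([0,D_i]\times[0,\infty))$. On the interior of any subinterval $x\in(D_{k-1},D_k)$ one has $w_i(x,t)=u_i(x,t)-\kappa_i(p_k(x,t))$, and since $\kappa_i\in C^1$ the chain rule gives
\[
\partial_t w_i(x,t)-\partial_x w_i(x,t)=\bigl(\partial_t u_i-\partial_x u_i\bigr)-\nabla\kappa_i(p_k)\bigl(\partial_t p_k-\partial_x p_k\bigr)=0,
\]
the first bracket vanishing by (\ref{pde1}) and the second by the transport property of $p_k$; continuity of $w_i$ across the finitely many interfaces then extends the identity to all of $(0,D_i)$. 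A self-contained variant that does not invoke $p_k(x,t)=X(t+x)$ proceeds by induction on $k$: differentiating the $x$-ODEs (\ref{p1ode})--(\ref{pmode}) once in $x$ and once in $t$ and using $\partial_t u_i=\partial_x u_i$ shows that $\partial_t p_k$ and $\partial_x p_k$ obey the same linear ODE in $x$ on $[D_{k-1},D_k]$; they share the value at the left endpoint because $p_k(D_{k-1},t)=p_{k-1}(D_{k-1},t)$ and $u_{k-1}(D_{k-1},t)=U_{k-1}(t)=\kappa_{k-1}(p_{k-1}(D_{k-1},t))$ (the base case $k=1$ using $\partial_t p_1(0,t)=\dot X(t)=f(p_1(0,t),u_1(0,t),\ldots,u_m(0,t))=\partial_x p_1(0,t)$), so uniqueness of ODE solutions yields $\partial_t p_k=\partial_x p_k$.

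I expect the main obstacle to be largely organizational: keeping straight which predictor $p_k$ appears in $w_i$ on which interval, and---slightly more delicate---checking that the piecewise-defined $w_i$ glues with enough smoothness for the transport equation to remain valid across the interface points $x=D_k$. That gluing is precisely where the interface conditions (\ref{p2b})--(\ref{pmb}) and the feedback law (\ref{connew1}) are used; the latter forces $u_k(D_k,t)=\kappa_k(p_k(D_k,t))$, which is exactly what makes the one-sided $x$-derivatives of $p_{k+1}$ and $p_k$ agree at $x=D_k$. Everything else is routine once the standing regularity ($X\in C^1$, each $u_i$ continuous, $\kappa_i\in C^1$) and compatibility of the initial data with the feedback laws are invoked.
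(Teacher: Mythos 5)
Your proposal is correct and follows essentially the same route as the paper: the state equation from evaluating at $x=0$ with $p_1(0,t)=X(t)$, the boundary condition from $x=D_i$ and the feedback law (\ref{connew1}), and the transport equations from the identity $p_k(x,t)=X(t+x)$ of Section \ref{sudpre} together with the chain rule. Your extra attention to the gluing at the interface points $x=D_k$ (and the alternative ODE-uniqueness argument) is a more careful elaboration of a point the paper leaves implicit, since the piecewise predictors all coincide with the single function $X(t+x)$.
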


\begin{lemma}
\label{inversezeta}
The inverse backstepping transformations of (\ref{eq2})--(\ref{bacmnew}) are defined by
 \begin{eqnarray}
u_1(x,t)&=&w_1(x,t)+\kappa_1\left(\pi_1(x,t)\right),\quad x\in[0,D_1]\label{eq2in}\\
u_2(x,t)&=&w_2(x,t)+\left\{\begin{array}{ll}\kappa_2\left(\pi_1(x,t)\right),&x\in[0,D_1]\\\kappa_2\left(\pi_2(x,t)\right),&x\in[D_1,D_2]\end{array}\right.\label{bac2newin}\\
&\vdots&\nonumber\\
u_m(x,t)&=&w_m(x,t)+\left\{\begin{array}{ll}\kappa_m\left(\pi_1(x,t)\right),&x\in[0,D_1]\\\kappa_m\left(\pi_2(x,t)\right),&x\in[D_1,D_2]\\\vdots\\\kappa_m\left(\pi_m(x,t)\right),&x\in[D_{m-1},D_m]\end{array}\right.,\label{bacmnewinv}
\end{eqnarray}
where
\begin{eqnarray}
\pi_1(x,t)&=&X(t)+\int_0^xf\left(\pi_1(y,t),w_1(y,t)+\kappa_1\left(\pi_1(y,t)\right),\ldots,w_m(y,t)+\kappa_m\left(\pi_1(y,t)\right)\right)dy,\nonumber\\
&&\quad x\in[0,D_1]\\
\pi_2(x,t)&=&\pi_1(D_1,t)+\int_{D_1}^xf\left(\pi_2(y,t),\kappa_1\left(\pi_2(y,t)\right),w_2(y,t)+\kappa_2\left(\pi_2(y,t)\right),\right.\nonumber\\
&&\left.\ldots,w_m(y,t)+\kappa_m\left(\pi_1(y,t)\right)\right)dy,\quad x\in [D_1,D_2]\label{eqn}\\
&\vdots&\nonumber\\
\pi_m(x,t)&=&\pi_{m-1}(D_{m-1},t)+\int_{D_{m-1}}^xf\left(\pi_m(y,t),\kappa_1\left(\pi_m(y,t)\right),\ldots,\kappa_{m-1}\left(\pi_m(y,t)\right),\right.\nonumber\\
&&\left.w_m(y,t)+\kappa_m\left(\pi_m(y,t)\right)\right)dy,\quad x\in [D_{m-1},D_m]\label{eqnm}.
\end{eqnarray}
\end{lemma}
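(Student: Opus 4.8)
The plan is to reduce the lemma to the single identity $\pi_i\equiv p_i$, where the $\pi_i$ are the auxiliary states defined by the integral equations in the statement and the $p_i$ are the predictor states (\ref{p1})--(\ref{pm}), the relation being asserted whenever $u_i$ and $w_i$ are linked by the direct transformations (\ref{eq2})--(\ref{bacmnew}). Once this is in hand, the claimed inverse formulas (\ref{eq2in})--(\ref{bacmnewinv}) collapse to $u_i(x,t)=w_i(x,t)+\kappa_i\big(p_{\bullet}(x,t)\big)$ on each subinterval, which is exactly (\ref{eq2})--(\ref{bacmnew}) solved for $u_i$; hence the direct transformation of Lemma \ref{leminvz} and the transformation defined here compose to the identity in either order, which is the assertion to be proved. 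So everything hinges on establishing $\pi_i\equiv p_i$.

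First I would record well-posedness of the relevant integral equations: since $f$ is locally Lipschitz and each $\kappa_i$ is $C^1$, the ODEs in $x$ obtained by differentiating (\ref{p1})--(\ref{pm}) and the defining equations of $\pi_1,\ldots,\pi_m$ have right-hand sides that are locally Lipschitz in the state, so each admits a unique maximal solution emanating from the prescribed value at $x=D_{i-1}$ (set $D_0:=0$). The $p_i$ are defined on all of $[D_{i-1},D_i]$ because $p_i(x,t)=X(t+x)$ by Section \ref{sudpre}; for the $\pi_i$, existence on the whole of $[D_{i-1},D_i]$ will follow a posteriori from their agreement with $p_i$.

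The core is an induction on $i$. For $i=1$: on $[0,D_1]$ every direct transformation in (\ref{eq2})--(\ref{bacmnew}) reads $u_j(x,t)=w_j(x,t)+\kappa_j(p_1(x,t))$, $j=1,\ldots,m$, so substituting into the $x$-ODE (\ref{p1ode}) for $p_1$ shows that $p_1$ solves the very $x$-ODE satisfied by $\pi_1$, with the common initial condition $p_1(0,t)=X(t)=\pi_1(0,t)$; uniqueness gives $\pi_1=p_1$ on $[0,D_1]$. For the step, suppose $\pi_k=p_k$ on $[D_{k-1},D_k]$ for all $k\le j$. On $[D_j,D_{j+1}]$ the direct transformation gives $u_l(x,t)=w_l(x,t)+\kappa_l(p_{j+1}(x,t))$ for $l\ge j+1$, while the already-compensated channels enter only through $\kappa_1(p_{j+1}),\ldots,\kappa_j(p_{j+1})$; substituting into the $x$-ODE for $p_{j+1}$ (the analogue of (\ref{p2ode})--(\ref{pmode})) shows $p_{j+1}$ solves the same $x$-ODE as $\pi_{j+1}$. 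The initial conditions agree too, since by the interface relations (\ref{p2b})--(\ref{pmb}) for $p$, the corresponding relations for $\pi$ built into (\ref{eqn})--(\ref{eqnm}), and the induction hypothesis, $p_{j+1}(D_j,t)=p_j(D_j,t)=\pi_j(D_j,t)=\pi_{j+1}(D_j,t)$. Uniqueness then yields $\pi_{j+1}=p_{j+1}$, closing the induction.

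I expect the main difficulty to be organizational rather than conceptual: across the $m$ subintervals $[D_{i-1},D_i]$ one must keep careful track of which input channels have already been replaced by the feedback $\kappa_\ell(\cdot)$ and which still appear as the transformed actuator states $w_\ell+\kappa_\ell(\cdot)$, and verify the matching of the piecewise-defined $w_i$ and $\pi_i$ at the interface points $x=D_1,\ldots,D_{m-1}$ (these match precisely because $p_{j+1}(D_j,t)=p_j(D_j,t)$). The one genuinely analytic point — that the a priori merely locally defined $\pi_i$ in fact exist on all of $[D_{i-1},D_i]$ — is disposed of by the observation that, wherever they exist, they coincide with the globally defined $p_i$ and hence cannot escape in finite $x$.
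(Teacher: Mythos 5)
Your proposal is correct and follows essentially the same route as the paper: the paper's proof likewise opens by reducing the lemma to the identity $\pi_i\equiv p_i$ on $[D_{i-1},D_i]$ and establishes it by induction over the subintervals using uniqueness of solutions to the ODEs in $x$. The only (immaterial) difference is that the paper verifies that $X(t+x)$ solves the $\pi_i$-initial-value problem via the target system (\ref{plant1tr})--(\ref{pde2tr}) and then invokes $p_i(x,t)=X(t+x)$, whereas you substitute the direct transformations (\ref{eq2})--(\ref{bacmnew}) into (\ref{p1ode})--(\ref{pmode}) to show $p_i$ itself solves that problem.
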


\begin{lemma}
\label{lemcl}
There exists a class $\mathcal{KL}$ function $\beta_1$ such that the following holds 
\begin{eqnarray}
\bar{\Xi}(t)&\leq& \beta_1\left(\bar{\Xi}(0),t\right),\quad\mbox{for all $t\geq0$}\label{blem},
\end{eqnarray}
where
\begin{eqnarray}
\bar{\Xi}(t)&=&\left|X(t)\right|+\sum_{i=1}^{i=m}\|w_i(t)\|_{\infty}.\label{norm1}
\end{eqnarray}
\end{lemma}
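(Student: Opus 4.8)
The plan is to exploit the cascade structure of the target system $(\ref{plant1tr})$--$(\ref{pde2tr})$: the $w_i$-equations are pure transport equations, decoupled from $X$, carrying the homogeneous boundary condition $w_i(D_i,t)=0$, so each $w_i$ is extinguished after a finite time, after which the $X$-equation collapses to $\dot{X}=f(X,\kappa_1(X),\ldots,\kappa_m(X))$, which is globally asymptotically stable by virtue of Assumption $\ref{ass3}$ (ISS with zero input).

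First I would solve $(\ref{pde1tr})$, $(\ref{pde2tr})$ explicitly. With initial data $w_{i_0}\in C[0,D_i]$ compatible with $(\ref{pde2tr})$, i.e. $w_{i_0}(D_i)=0$ (which is exactly what compatibility of $u_{i_0}$ with the feedback becomes under the transformations $(\ref{eq2})$--$(\ref{bacmnew})$), the solution is $w_i(x,t)=w_{i_0}(x+t)$ for $x+t\le D_i$ and $w_i(x,t)=0$ for $x+t\ge D_i$; in particular $w_i\in C([0,D_i]\times[0,\infty))$. Consequently $\|w_i(t)\|_{\infty}=\sup_{\xi\in[t,D_i]}|w_{i_0}(\xi)|$ is nonincreasing in $t$, bounded by $\|w_{i_0}\|_{\infty}$, and equal to $0$ for $t\ge D_i$; writing $\mu:=\sum_{i=1}^{m}\|w_i(0)\|_{\infty}$ we get $\sum_{i=1}^{m}\|w_i(t)\|_{\infty}\le\mu$ for all $t\ge0$ and $\sum_{i=1}^{m}\|w_i(t)\|_{\infty}=0$ for all $t\ge D_m$. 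Likewise the boundary trace $t\mapsto(w_1(0,t),\ldots,w_m(0,t))$ driving the $X$-equation is bounded by a function of $\mu$ and vanishes identically for $t\ge D_m$.

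Next I would invoke Assumption $\ref{ass3}$ for the $X$-equation. Since the disturbance $w(0,\cdot)$ is bounded, ISS yields a unique solution $X(t)\in C^1[0,\infty)$ together with $\beta_2\in\mathcal{KL}$ and $\gamma_2\in\mathcal{K}$ such that $|X(t)|\le\beta_2(|X(0)|,t)+\gamma_2(\sup_{0\le s\le t}|w(0,s)|)$ for all $t\ge0$. Combined with the previous step, this gives the uniform bound $|X(t)|\le\rho(\bar{\Xi}(0))$ on $[0,D_m]$ with $\rho(s):=s+\beta_2(s,0)+\gamma_2(s)\in\mathcal{K}_{\infty}$, and, applying ISS with the time origin shifted to $D_m$ and zero disturbance thereafter, $|X(t)|\le\beta_2(|X(D_m)|,t-D_m)\le\beta_2(\rho(\bar{\Xi}(0)),t-D_m)$ for $t\ge D_m$. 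Hence $\bar{\Xi}(t)\le2\rho(\bar{\Xi}(0))$ for $t\in[0,D_m]$ and $\bar{\Xi}(t)=|X(t)|\le\beta_2(\rho(\bar{\Xi}(0)),t-D_m)$ for $t\ge D_m$.

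The final step is to merge these two regimes into one class $\mathcal{KL}$ estimate $(\ref{blem})$; a concrete choice is $\beta_1(s,t):=2\rho(s)e^{D_m-t}+\beta_2(\rho(s),\max\{0,t-D_m\})$, which is of class $\mathcal{KL}$, dominates $\bar{\Xi}(t)$ on $[0,D_m]$ because there $e^{D_m-t}\ge1$ so its first term already exceeds $2\rho(\bar{\Xi}(0))$, and dominates $\bar{\Xi}(t)$ on $[D_m,\infty)$ through its second term. The hard part is really only this patching---turning ``uniformly bounded on a finite horizon, then $\mathcal{KL}$-decaying'' into a genuine $\mathcal{KL}$ bound valid for all $t\ge0$---and it is unlocked precisely by the finite-time extinction of the $w_i$'s; the rest is the explicit transport solution plus bookkeeping of the ISS estimate. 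Observe that Assumptions $\ref{ass1}$ and $\ref{ass2}$ are not used in this lemma---they enter elsewhere, to guarantee well-posedness of the original closed loop and of the predictor maps $p_i$---only Assumption $\ref{ass3}$ is invoked here.
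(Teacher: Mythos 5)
Your proof is correct, but it takes a genuinely different route from the paper's. The paper proves Lemma \ref{lemcl} by a Lyapunov construction: it takes the ISS-Lyapunov function $S$ guaranteed by Assumption \ref{ass3} (in its dissipation form (\ref{issbg})--(\ref{issg})), combines it with the exponentially weighted supremum norms of the transport states satisfying (\ref{lypwnew}), forms the composite functional $V$ in (\ref{lyapnew123}), derives $\dot V\leq-\gamma_1(V)$, and invokes the comparison principle. You instead use the trajectory formulation of ISS for the $X$-subsystem of (\ref{plant1tr}) together with the explicit transport solution (essentially (\ref{solution w12})), exploit the finite-time extinction of the $w_i$'s at $t=D_m$, and then patch a finite-horizon uniform bound to a $\mathcal{KL}$ tail. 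Both arguments are sound (the two formulations of ISS are equivalent by the Sontag--Wang characterization the paper already cites), and your patching function $\beta_1(s,t)=2\rho(s)e^{D_m-t}+\beta_2\left(\rho(s),\max\{0,t-D_m\}\right)$ is indeed of class $\mathcal{KL}$ and dominates $\bar\Xi(t)$ in both regimes. The trade-off is worth noting: your argument is shorter and more elementary, but it is really the ``estimates on solutions'' technique that the paper reserves for Section \ref{sec4} and Theorem \ref{theorem2} --- it hinges on the dead-beat vanishing of the target boundary data, whereas the paper's construction delivers an explicit Lyapunov functional for the full target system, which is the advertised payoff of Section \ref{sec3} (useful, e.g., for robustness or disturbance-attenuation studies, and not contingent on the disturbance vanishing in finite time). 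Your closing observation that only Assumption \ref{ass3} is used in this lemma, with Assumptions \ref{ass1} and \ref{ass2} entering elsewhere (in Lemmas \ref{lemmabdirect} and \ref{lemmabdirectinv} and the well-posedness argument), is accurate and matches the paper's structure.
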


\begin{lemma}
\label{lemmabdirect}
There exist class $\mathcal{K}_{\infty}$ functions $\rho_1,\ldots,\rho_m$ such that
\begin{eqnarray}
\|p_1(t)\|_{\infty}&\leq&\rho_1\left({\Xi}(t)\right)\label{pboundd}\\
\|p_2(t)\|_{\infty}&\leq&\rho_2\left({\Xi}(t)\right)\label{pboundd2}\\
&\vdots&\nonumber\\
\|p_m(t)\|_{\infty}&\leq&\rho_m\left({\Xi}(t)\right)\label{pbounddm},
\end{eqnarray}
where ${\Xi}$ is defined in (\ref{norm}).
\end{lemma}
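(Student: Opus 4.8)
\emph{Proof plan.} The approach is to exploit the fact, established in Section~\ref{sudpre}, that each predictor profile $p_j(\cdot,t)$ is the solution of an ODE in the spatial variable $x$, in which $X(t)$ enters as an ``initial condition'' through the boundary conditions (\ref{p1b})--(\ref{pmb}) while the actuator profiles $u_j(\cdot,t),\ldots,u_m(\cdot,t)$ enter as locally bounded ``inputs''. By (\ref{p1ode}), (\ref{p1b}), $p_1(\cdot,t)$ solves $\partial_x p_1=f(p_1,u_1,\ldots,u_m)$ on $x\in[0,D_1]$ with $p_1(0,t)=X(t)$; and by (\ref{p2ode})--(\ref{pmode}), (\ref{p2b})--(\ref{pmb}), for $j\geq2$ the profile $p_j(\cdot,t)$ solves $\partial_x p_j=g_{j-1}(p_j,u_j,\ldots,u_m)$ on $x\in[D_{j-1},D_j]$ with $p_j(D_{j-1},t)=p_{j-1}(D_{j-1},t)$, where $g_{j-1}$ is the vector field appearing in Assumption~\ref{ass2}. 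Since $f$ is locally Lipschitz and the $\kappa_i$ are $C^1$ with $\kappa_i(0)=0$, each $g_{j-1}$ is locally Lipschitz and vanishes at the origin, so these spatial ODEs have unique solutions, and Assumption~\ref{ass1} (for $p_1$) and Assumption~\ref{ass2} (for $p_j$, $j\geq2$) ensure that these solutions exist on the whole finite spatial intervals of interest.

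First I would recall the estimate on solutions that accompanies strong forward completeness (see \cite{krstic}, \cite{angeli}): because the vanishing-at-the-origin property holds, for a strongly forward complete system $\dot X=f(X,\omega)$ and any horizon length $\ell>0$ there is a class $\mathcal{K}_\infty$ function $\theta_{f,\ell}$, with \emph{no} additive constant, such that $\sup_{0\leq s\leq\ell}|X(s)|\leq\theta_{f,\ell}\bigl(|X(0)|+\int_0^\ell|\omega(\tau)|\,d\tau\bigr)$. Applying this to the $p_1$-ODE on $[0,D_1]$ and bounding $\int_0^{D_1}\sum_{i=1}^m|u_i(y,t)|\,dy\leq D_1\sum_{i=1}^m\|u_i(t)\|_\infty$ gives $\|p_1(t)\|_\infty\leq\theta_{f,D_1}\bigl(|X(t)|+D_1\sum_{i=1}^m\|u_i(t)\|_\infty\bigr)$; since the right-hand side is dominated by a class $\mathcal{K}_\infty$ function of $\Xi(t)$ (sums and compositions of $\mathcal{K}_\infty$ functions being $\mathcal{K}_\infty$), this is exactly (\ref{pboundd}) for a suitable $\rho_1$.

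Next I would argue by induction on $j$. Assuming $\|p_{j-1}(t)\|_\infty\leq\rho_{j-1}(\Xi(t))$, I would note that $g_{j-1}$ is strongly forward complete by Assumption~\ref{ass2}, apply the corresponding solution estimate with horizon length $D_j-D_{j-1}$ to the $p_j$-ODE, and use $|p_j(D_{j-1},t)|\leq\|p_{j-1}(t)\|_\infty$ together with $\int_{D_{j-1}}^{D_j}\sum_{i=j}^m|u_i(y,t)|\,dy\leq(D_j-D_{j-1})\sum_{i=1}^m\|u_i(t)\|_\infty$ to obtain $\|p_j(t)\|_\infty\leq\theta_{g_{j-1},D_j-D_{j-1}}\bigl(\rho_{j-1}(\Xi(t))+(D_j-D_{j-1})\sum_{i=1}^m\|u_i(t)\|_\infty\bigr)$, which is again bounded by a class $\mathcal{K}_\infty$ function $\rho_j$ of $\Xi(t)$. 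This closes the induction and proves the lemma.

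The only point requiring genuine care is the bookkeeping: pairing each predictor with the correct subsystem ($f$ for $p_1$, $g_{j-1}$ for $p_j$), verifying that the relevant strong-forward-completeness hypothesis indeed applies so the solution estimate is available on the entire spatial interval, and checking that the successive compositions remain within class $\mathcal{K}_\infty$. I do not expect any substantive obstacle beyond this.
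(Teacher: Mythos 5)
Your proposal is correct and follows essentially the same route as the paper: an induction over $j$ that treats each $p_j$ as the solution of a spatial ODE driven by the vector field $f$ (for $j=1$, via Assumption~\ref{ass1}) or $g_{j-1}$ (for $j\geq2$, via Assumption~\ref{ass2}), chained through the boundary conditions (\ref{p1b})--(\ref{pmb}) and closed by composing class $\mathcal{K}_\infty$ functions. The only cosmetic difference is that the paper derives the needed solution estimate explicitly from the Lyapunov-like characterization of strong forward completeness (smooth functions $R,R_j$ satisfying a linear differential inequality, integrated \`a la Gronwall), whereas you invoke the equivalent estimate-on-solutions form directly.
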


\begin{lemma}
\label{lemmabdirectinv}
There exist class $\mathcal{K}_{\infty}$ functions $\bar{\rho}_1,\ldots,\bar{\rho}_m$ such that
\begin{eqnarray}
\|\pi_1(t)\|_{\infty}&\leq&\bar{\rho}_1\left(\bar{\Xi}(t)\right)\label{invbpi}\\
\|\pi_2(t)\|_{\infty}&\leq&\bar{\rho}_2\left(\bar{\Xi}(t)\right)\label{invbpi2}\\
&\vdots&\nonumber\\
\|\pi_m(t)\|_{\infty}&\leq&\bar{\rho}_m\left(\bar{\Xi}(t)\right)\label{invbpim},
\end{eqnarray}
where $\bar{\Xi}$ is defined in (\ref{norm1}).
\end{lemma}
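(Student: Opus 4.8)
The plan is to run the same recursive scheme as in the proof of Lemma~\ref{lemmabdirect}, but routed entirely through Assumption~\ref{ass3} instead of Assumptions~\ref{ass1}--\ref{ass2}. First I would differentiate the integral equations of Lemma~\ref{inversezeta} in $x$ to obtain, for each frozen $t$, that $\pi_1(\cdot,t)$ solves the ODE in $x$, $\partial_x\pi_1=f(\pi_1,w_1(x,t)+\kappa_1(\pi_1),\ldots,w_m(x,t)+\kappa_m(\pi_1))$ on $[0,D_1]$ with $\pi_1(0,t)=X(t)$, and that $\pi_j(\cdot,t)$, $j\ge2$, solves $\partial_x\pi_j=f(\pi_j,\kappa_1(\pi_j),\ldots,\kappa_{j-1}(\pi_j),w_j(x,t)+\kappa_j(\pi_j),\ldots,w_m(x,t)+\kappa_m(\pi_j))$ on $[D_{j-1},D_j]$ with $\pi_j(D_{j-1},t)=\pi_{j-1}(D_{j-1},t)$. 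Throughout, $x$ plays the role of time; let $\beta_3\in\mathcal{KL}$ and $\gamma_3\in\mathcal{K}$ denote the ISS functions supplied by Assumption~\ref{ass3}.

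The base case is immediate: the $\pi_1$-ODE is literally the closed-loop system of Assumption~\ref{ass3}, driven by the input $\omega=(w_1,\ldots,w_m)$ and started from $X(t)$; since that system is ISS and the spatial interval $[0,D_1]$ has finite length, evaluating the ISS estimate over $[0,D_1]$ gives $\|\pi_1(t)\|_{\infty}\le\beta_3(|X(t)|,0)+\gamma_3\!\left(\sum_{i=1}^{m}\|w_i(t)\|_{\infty}\right)\le\beta_3(\bar{\Xi}(t),0)+\gamma_3(\bar{\Xi}(t))=:\bar{\rho}_1(\bar{\Xi}(t))$, where the class-$\mathcal{K}$ right-hand side is majorized by a class $\mathcal{K}_{\infty}$ function $\bar{\rho}_1$. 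The inductive step is where the main obstacle lies, and it is a recognition step rather than a computational one: although the $\pi_j$-ODE carries the pure state feedbacks $\kappa_1(\pi_j),\ldots,\kappa_{j-1}(\pi_j)$ in its first $j-1$ input arguments, writing $\kappa_k(\pi_j)=0+\kappa_k(\pi_j)$ shows that it is again an instance of the Assumption~\ref{ass3} system, namely with $\omega_1=\cdots=\omega_{j-1}\equiv0$ and $\omega_k=w_k$ for $k\ge j$. Since ISS is preserved when some input channels are held at zero, the $\pi_j$-ODE is ISS (hence forward complete) with respect to $(w_j,\ldots,w_m)$, uniformly in its starting value $\pi_j(D_{j-1},t)=\pi_{j-1}(D_{j-1},t)$. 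I would stress here why one cannot instead invoke the strong forward completeness of $g_{j-1}$ from Assumption~\ref{ass2}: forward completeness of $\dot X=g_{j-1}(X,\omega)$ is in general destroyed once the feedbacks $\kappa_k(X)$, $k\ge j$, are injected into the remaining channels, so Assumption~\ref{ass3} is genuinely the relevant hypothesis for the inverse transformation (indeed this lemma needs only Assumption~\ref{ass3}).

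With the ISS property of the $\pi_j$-ODE in hand, the induction closes by a routine estimate over $[D_{j-1},D_j]$: $\|\pi_j(t)\|_{\infty}\le\beta_3\!\left(|\pi_j(D_{j-1},t)|,0\right)+\gamma_3\!\left(\sum_{k=j}^{m}\|w_k(t)\|_{\infty}\right)\le\beta_3\!\left(\|\pi_{j-1}(t)\|_{\infty},0\right)+\gamma_3(\bar{\Xi}(t))$, and then the induction hypothesis $\|\pi_{j-1}(t)\|_{\infty}\le\bar{\rho}_{j-1}(\bar{\Xi}(t))$ together with monotonicity of $\beta_3(\cdot,0)$ yields $\|\pi_j(t)\|_{\infty}\le\bar{\rho}_j(\bar{\Xi}(t))$ with $\bar{\rho}_j(s):=\beta_3(\bar{\rho}_{j-1}(s),0)+\gamma_3(s)$, again majorized by a class $\mathcal{K}_{\infty}$ function. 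Forward completeness along the way guarantees that each $\pi_j(\cdot,t)$ is defined on all of $[D_{j-1},D_j]$, so the suprema are finite and the bounds \eqref{invbpi}--\eqref{invbpim} follow. The overall structure is the mirror image of the proof of Lemma~\ref{lemmabdirect}; the only substantive new ingredient is the zeroing-of-inputs observation that brings each $\pi_j$-ODE under the umbrella of Assumption~\ref{ass3}.
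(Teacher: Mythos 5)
Your proposal is correct and takes essentially the same route as the paper's proof: an induction in $j$ that views each $\pi_j(\cdot,t)$ as the solution in $x$ of the Assumption~\ref{ass3} closed-loop ODE driven by $(w_j,\ldots,w_m)$ (with the earlier channels zeroed), applies the ISS estimate over the finite interval $[D_{j-1},D_j]$, and composes with the inductive bound on $\pi_{j-1}(D_{j-1},t)$ to obtain $\bar{\rho}_j(s)=\beta_3\left(\bar{\rho}_{j-1}(s),0\right)+\gamma_2(s)$. Your explicit justification that the $\pi_j$-equation falls under Assumption~\ref{ass3} by setting $\omega_1=\cdots=\omega_{j-1}\equiv0$ is a point the paper leaves implicit, but it is the same argument.
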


\begin{lemma}
\label{lemmaeq}
There exist class $\mathcal{K}_{\infty}$ functions $\rho$, $\bar{\rho}$ such that
\begin{eqnarray}
\bar{\Xi}(t)&\leq&\rho\left(\Xi(t)\right)\label{b1eq}\\
\Xi(t)&\leq&\bar{\rho}\left(\bar{\Xi}(t)\right).\label{b2eq}
\end{eqnarray}
\end{lemma}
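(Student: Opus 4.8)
\textbf{Proof proposal for Lemma \ref{lemmaeq}.} The plan is to read off both inequalities directly from the backstepping transformations (\ref{eq2})--(\ref{bacmnew}) and their inverses (\ref{eq2in})--(\ref{bacmnewinv}), feeding in the growth estimates on the predictor states already furnished by Lemmas \ref{lemmabdirect} and \ref{lemmabdirectinv}. The one preliminary fact I would record is that, since each $\kappa_i$ is continuously differentiable with $\kappa_i(0)=0$, the map $s\mapsto\max_{|q|\le s}|\kappa_i(q)|$ is continuous, nondecreasing, and vanishes at $s=0$; hence there is a class $\mathcal{K}_{\infty}$ function $\alpha_i$ with $|\kappa_i(q)|\le\alpha_i(|q|)$ for all $q\in\mathbb{R}^n$. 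This is the only place any estimate is needed, and it is the routine majorization of a continuous function vanishing at the origin by a $\mathcal{K}_{\infty}$ function.

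For (\ref{b1eq}) I would start from (\ref{eq2})--(\ref{bacmnew}). Writing $D_0:=0$, on each subinterval $[D_{k-1},D_k]\subseteq[0,D_i]$ with $k\le i$ the transformation reads $w_i(x,t)=u_i(x,t)-\kappa_i(p_k(x,t))$, so taking the spatial supremum, using $|\kappa_i(p_k(x,t))|\le\alpha_i(|p_k(x,t)|)$ and monotonicity of $\alpha_i$, gives $\|w_i(t)\|_{\infty}\le\|u_i(t)\|_{\infty}+\alpha_i\bigl(\max_{k\le i}\|p_k(t)\|_{\infty}\bigr)$. Invoking Lemma \ref{lemmabdirect} to replace each $\|p_k(t)\|_{\infty}$ by $\rho_k(\Xi(t))$, summing over $i=1,\dots,m$, and adding $|X(t)|$, I obtain $\bar{\Xi}(t)\le\Xi(t)+\sum_{i=1}^{m}\alpha_i\bigl(\max_{k\le i}\rho_k(\Xi(t))\bigr)=:\rho(\Xi(t))$, and $\rho$ is class $\mathcal{K}_{\infty}$ because it is a finite sum of the identity with compositions of class $\mathcal{K}_{\infty}$ functions, and both operations preserve $\mathcal{K}_{\infty}$.

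For (\ref{b2eq}) I would run the identical argument with the inverse transformations (\ref{eq2in})--(\ref{bacmnewinv}) in place of (\ref{eq2})--(\ref{bacmnew}) and the states $\pi_k$ in place of $p_k$: on $[D_{k-1},D_k]$ one has $u_i(x,t)=w_i(x,t)+\kappa_i(\pi_k(x,t))$, hence $\|u_i(t)\|_{\infty}\le\|w_i(t)\|_{\infty}+\alpha_i\bigl(\max_{k\le i}\|\pi_k(t)\|_{\infty}\bigr)$; applying Lemma \ref{lemmabdirectinv} to bound $\|\pi_k(t)\|_{\infty}$ by $\bar{\rho}_k(\bar{\Xi}(t))$, summing, and adding $|X(t)|$ yields $\Xi(t)\le\bar{\Xi}(t)+\sum_{i=1}^{m}\alpha_i\bigl(\max_{k\le i}\bar{\rho}_k(\bar{\Xi}(t))\bigr)=:\bar{\rho}(\bar{\Xi}(t))$ with $\bar{\rho}\in\mathcal{K}_{\infty}$ for the same reason.

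I do not anticipate a genuine obstacle: the substantive content sits entirely in Lemmas \ref{lemmabdirect} and \ref{lemmabdirectinv}, and the present lemma is the bookkeeping step that upgrades ``$p_i$ and $\pi_i$ are bounded by $\mathcal{K}_{\infty}$ functions of $\Xi$ and $\bar{\Xi}$'' into an outright equivalence of the norms $\Xi$ and $\bar{\Xi}$. The only mild care is in handling the piecewise definition of the transformations by taking the maximum over the relevant pieces $k\le i$, and in keeping the resulting $\rho,\bar{\rho}$ in $\mathcal{K}_{\infty}$ rather than merely $\mathcal{K}$ — which is automatic once the $\alpha_i$ are chosen in $\mathcal{K}_{\infty}$.
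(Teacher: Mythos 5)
Your proposal is correct and follows essentially the same route as the paper: majorize each $\kappa_i$ by a class $\mathcal{K}_\infty$ function using $\kappa_i(0)=0$, then read the two bounds off the direct and inverse backstepping transformations via Lemmas \ref{lemmabdirect} and \ref{lemmabdirectinv}. The only cosmetic difference is that the paper composes $\alpha_i^*$ with $\sum_j\rho_j(s)$ rather than $\max_{k\le i}\rho_k(s)$, which is immaterial.
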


\begin*{{\it Proof of Theorem \ref{theorem1}}:}
Combining (\ref{b2eq}) with (\ref{blem}) we get that $\Xi(t)\leq\bar{\rho}\left( \beta_1\left(\bar{\Xi}(0),t\right)\right)$, for all $t\geq0$, and hence, with (\ref{b1eq}) we arrive at (\ref{estimate theorem 1}) with $\beta(s,t)=\bar{\rho}\left( \beta_1\left(\rho\left(s\right),t\right)\right)$. The proof of existence and uniqueness of a solution $X(t)\in C^1[0,\infty)$ and $u_i(x,t)\in C\left([0,D_i]\times [0,\infty)\right)$, $i=1,\ldots,m$, is shown as follows. Using relations (\ref{p1ode})--(\ref{pmode}) for $t=0$, the compatibility of the initial conditions $u_{i_0}$, $i=1,\ldots,m$, with the feedback laws (\ref{connew1}) guarantee that $p_i(x,0)\in C^1[D_{i-1},D_i]$, where $D_0=0$. Hence, using relations (\ref{p1})--(\ref{pm}), (\ref{p1b})--(\ref{pmb}), and the fact that $u_{i_0}\in C[0,D_i]$, $i=1,\ldots,m$, it also follows from (\ref{eq2})--(\ref{bacmnew}) that $w_{i_0}\in C[0,D_i]$, $i=1,\ldots,m$. The solution to (\ref{pde1tr}), (\ref{pde2tr}) is given by
\begin{eqnarray}
w_i(x,t)=\left\{\begin{array}{ll}w_{i_0}(t+x),&0\leq x+t<D_i\\0,&x+t\geq D_i\end{array}\right.,\quad i=1,\ldots,m\label{solution w12}.
\end{eqnarray}
The uniqueness of this solution follows from the uniqueness of the solution to (\ref{pde1tr}), (\ref{pde2tr}) (see Sections 2.1 and 2.3 in \cite{coron}). Hence, the compatibility of the initial conditions $u_{i_0}$, $i=1,\ldots,m$, with the feedback laws (\ref{connew1}) guarantee that there exist a unique solution $w_i(x,t)\in C\left([0,D_i]\times [0,\infty)\right)$, $i=1,\ldots,m$. From the target system (\ref{plant1tr}) it follows that $X(t)\in C^1[0,\infty)$. The fact that $\pi_i(x,t)=X(t+x)$, for all $t\geq0$ and $x\in [D_{i-1},D_i]$, and the inverse backstepping transformations (\ref{eq2in})--(\ref{bacmnewinv}) guarantee that $u_i(x,t)\in C\left([0,D_i]\times [0,\infty)\right)$, $i=1,\ldots,m$. The proof is completed. 
\end*

\section{Stability Analysis Under Predictor Feedback Using Estimates on Solutions}
\label{sec4}
\begin{theorem}
\label{theorem2}
Consider the closed-loop system consisting of the plant (\ref{plant}) and the control laws (\ref{con1})--(\ref{conn}). Let Assumptions \ref{ass1} and \ref{ass2} hold and assume that the system $\dot{X}=f\left(X,\kappa_1(X),\ldots,\kappa_m(X)\right)$ is globally asymptotically stable. There exists a class $\mathcal{KL}$ function $\hat{\beta}$ such that for all initial conditions $X_0\in\mathbb{R}^n$ and $U_{i_0}\in C[-D_i,0]$, $i=1,\ldots,m$, which are compatible with the feedback laws, there exists a unique solution to the closed-loop system with $X(t)\in C^1[0,\infty)$ and $U_i(t)$, $i=1,\ldots,m$, locally Lipschitz on $(0,\infty)$, and the following holds
\begin{eqnarray}
\Omega(t)\leq\hat{\beta}\left(\Omega(0),t\right),\quad \mbox{for all $t\geq0$},\label{estimate theorem 2}
\end{eqnarray}
where
\begin{eqnarray}
\Omega(t)=\left|X(t)\right|+\sum_{i=1}^{i=m}\sup_{t-D_i\leq\theta\leq t}\left|U_i(\theta)\right|.
\end{eqnarray}
\end{theorem}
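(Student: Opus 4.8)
The plan is to exploit the property---already visible from the construction in Section~\ref{sec2}---that each input delay is compensated after a finite time, so that the closed-loop state equation collapses, in finitely many stages, onto the nominally stabilized dynamics. From the identities $P_1(\theta)=X(\theta+D_1)$ for $\theta\ge-D_1$ and $P_i(\theta)=X(\theta+D_i)$ for $\theta\ge-D_{ii-1}$, $i\ge2$, established in Section~\ref{sec2}, it follows that $U_i(t)=\kappa_i(X(t+D_i))$ for all $t\ge0$, and hence, writing $D_0=0$ and $g_0=f$, that on each interval $[D_j,D_{j+1}]$, $j=0,\ldots,m-1$, the state obeys the ordinary differential equation $\dot X(t)=g_j\bigl(X(t),U_{j+1}(t-D_{j+1}),\ldots,U_m(t-D_m)\bigr)$, where for $i>j$ and $t\le D_{j+1}$ the signal $U_i(t-D_i)$ is simply the continuous initial datum $U_{i_0}(t-D_i)$, while on $[D_m,\infty)$ the state obeys $\dot X(t)=f\bigl(X(t),\kappa_1(X(t)),\ldots,\kappa_m(X(t))\bigr)$. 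Existence and uniqueness of the closed-loop solution then follow by solving these ordinary differential equations on the successive intervals $[0,D_1],[D_1,D_2],\ldots,[D_{m-1},D_m],[D_m,\infty)$: Assumption~\ref{ass1} rules out finite escape on $[0,D_1]$, Assumption~\ref{ass2} rules it out on each $[D_j,D_{j+1}]$, and global asymptotic stability rules it out on $[D_m,\infty)$; compatibility of the initial data with the feedback laws makes $\dot X$ continuous across each $t=D_j$, so $X\in C^1[0,\infty)$; finally the reconstructed signals $U_i(t)=\kappa_i(X(t+D_i))$ agree with the predictor feedback by mirroring the computations of Section~\ref{sec2} and are locally Lipschitz on $(0,\infty)$ because $X\in C^1$ and $\kappa_i\in C^1$.

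Next I would propagate a size bound stage by stage. Put $r:=|X(0)|+\sum_{i=1}^m\sup_{-D_i\le\theta\le0}|U_{i_0}(\theta)|$. On $[0,D_1]$ every delayed input is a piece of the initial data, so strong forward completeness of $\dot X=f(X,\omega)$ (Assumption~\ref{ass1}) gives a class $\mathcal{K}_{\infty}$ bound $|X(t)|\le\alpha_0(r)$ there; applying strong forward completeness of $\dot X=g_j(X,\omega)$ (Assumption~\ref{ass2}) on $[D_j,D_{j+1}]$ with initial value $X(D_j)$ and with the remaining delayed inputs again pieces of the initial data, and composing the finitely many resulting class $\mathcal{K}_{\infty}$ estimates, yields a single class $\mathcal{K}_{\infty}$ function $\alpha$ with $|X(t)|\le\alpha(r)$ for all $t\in[0,D_m]$. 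On $[D_m,\infty)$ global asymptotic stability of $\dot X=f(X,\kappa_1(X),\ldots,\kappa_m(X))$ supplies a class $\mathcal{KL}$ function $\beta_0$ with $|X(t)|\le\beta_0(|X(D_m)|,t-D_m)\le\beta_0(\alpha(r),t-D_m)$; a routine bundling of the uniform transient bound $\alpha(r)$ on $[0,D_m]$ with this decaying tail then furnishes a class $\mathcal{KL}$ function $\bar{\beta}_0$ with $|X(t)|\le\bar{\beta}_0(r,t)$ for all $t\ge0$.

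It remains to pass from this state estimate to $\Omega$. Since $\kappa_i\in C^1$ with $\kappa_i(0)=0$, fix class $\mathcal{K}_{\infty}$ functions with $|\kappa_i(\xi)|\le\alpha_{\kappa_i}(|\xi|)$. For $\theta\in[t-D_i,t]$ with $\theta\ge0$ one has $|U_i(\theta)|\le\alpha_{\kappa_i}(|X(\theta+D_i)|)$ and $\theta+D_i\in[t,t+D_i]$, so the feedback part of $\sup_{t-D_i\le\theta\le t}|U_i(\theta)|$ is at most $\alpha_{\kappa_i}\bigl(\sup_{s\ge t}|X(s)|\bigr)\le\alpha_{\kappa_i}\bigl(\bar{\beta}_0(r,t)\bigr)$, while the initial-data part appears only for $t<D_i$ and is dominated there by $r$, hence by the class $\mathcal{KL}$ term $r\,e^{D_i-t}$. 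Summing over $i$ and adding $|X(t)|\le\bar{\beta}_0(r,t)$ gives $\Omega(t)\le\hat{\beta}(r,t)$ for a class $\mathcal{KL}$ function $\hat{\beta}$; and since compatibility of the initial data with the feedback laws forces $U_{i_0}(0)=\kappa_i(P_i(0))$, the quantity $r$ coincides with $\Omega(0)$, which is (\ref{estimate theorem 2}). The main obstacle here is organizational rather than conceptual: one has to match the look-ahead window $[t,t+D_i]$ implicit in $\Omega(t)$ to the decaying state bound, track where delayed inputs are still given by the merely continuous initial data rather than by the feedback, and assemble the finitely many phase estimates, the $\mathcal{KL}$ tail, and the constant-on-$[0,D_i]$ data contributions into a single genuine class $\mathcal{KL}$ function.
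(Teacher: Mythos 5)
Your proposal is correct and follows essentially the same route as the paper's proof: establish $U_i(t)=\kappa_i(X(t+D_i))$, bound $|X(t)|$ stage by stage on $[0,D_1],\ldots,[D_{m-1},D_m]$ via (strong) forward completeness under Assumptions \ref{ass1} and \ref{ass2} (the paper makes this quantitative by invoking Lemma 3.5 of \cite{kaatrfyllis11}), invoke global asymptotic stability of the nominal closed loop for $t\geq D_m$, and then transfer the resulting $\mathcal{KL}$ state estimate to the delayed-input terms through the local Lipschitzness of the $\kappa_i$ at the origin, handling the initial-data contribution for $t<D_i$ with a decaying exponential factor. The regularity argument by piecing the ODE together across the instants $D_1,\ldots,D_m$ using compatibility of the initial data also matches the paper's.
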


\begin{proof}
We estimate first $|X(t)|$, for all $t\geq0$. Since the system $\dot{X}=f\left(X,\omega_1,\ldots,\omega_m\right)$ is forward complete using Lemma 3.5 from \cite{kaatrfyllis11} and the fact that $f(0,0,\ldots,0)=0$ (which allows us to set $R=0$), we get that 
\begin{eqnarray}
|X(t)|\leq \psi_1\left(|X(0)| +\sum_{i=1}^{i=m}\sup_{-D_i\leq\theta\leq 0}\left|U_i(\theta)\right|\right),\quad \mbox{for all $0\leq t\leq D_1$},\label{es1li}
\end{eqnarray}
for some class $\mathcal{K}_{\infty}$ function $\psi_1$. Using the fact that $U_1(t)=\kappa_1\left(P_1(t)\right)$, for all $t\geq0$, with $P_1(t)=X\left(t+D_1\right)$ and the fact that system $\dot{X}=f\left(X,\kappa_1(X),\omega_2,\ldots,\omega_m\right)$ is forward complete with respect to $\omega_2$ we get by applying Lemma 3.5 from \cite{kaatrfyllis11} that $|X(t)|\leq \bar{\psi}_2\left(|X(D_1)| +\sum_{i=2}^{i=m}\sup_{D_1-D_i\leq\theta\leq D_2-D_i}\left|U_i(\theta)\right|\right)$, for all $ D_1\leq t\leq D_2$, for some class $\mathcal{K}_{\infty}$ function $\bar{\psi}_2$. Hence, since $D_{j}\leq D_i$, $\forall j\leq i$, with (\ref{es1li}) we get
\begin{eqnarray}
|X(t)|\leq {\psi}_2\left(|X(0)| +\sum_{i=1}^{i=m}\sup_{-D_i\leq\theta\leq 0}\left|U_i(\theta)\right|\right),\quad \mbox{for all $ D_1\leq t\leq D_2$},\label{es2li} 
\end{eqnarray}
where the class $\mathcal{K}_{\infty}$ function ${\psi}_2(s)$ is defined as $\psi_2(s)=\bar{\psi}_2\left(\psi_1(s)+s\right)$. By repeatedly applying Lemma 3.5 from \cite{kaatrfyllis11} we get under Assumption \ref{ass2} that there exist class $\mathcal{K}_{\infty}$ functions $\bar{\psi}_j$, $j=3,\ldots,m$ such that $|X(t)|\leq \bar{\psi}_j\left(|X(D_{j-1})| +\sum_{i=j}^{i=m}\sup_{D_{j-1}-D_i\leq\theta\leq D_j-D_i}\left|U_i(\theta)\right|\right)$, for all $ D_{j-1}\leq t\leq D_j$, and hence,
\begin{eqnarray}
|X(t)|\leq \psi_j\left(|X(0)| +\sum_{i=1}^{i=m}\sup_{-D_i\leq\theta\leq 0}\left|U_i(\theta)\right|\right),\quad \mbox{for all $ D_{j-1}\leq t\leq D_j$},\label{esjli} 
\end{eqnarray}
where the class $\mathcal{K}_{\infty}$ functions ${\psi}_j$, $j=3,\ldots,m$ are defined as $\psi_j(s)=\bar{\psi}_{j}\left({\psi}_{j-1}(s)+s\right)$, $j=3,\ldots,m$, and where we also used the fact that $D_{j}\leq D_i$, $\forall j\leq i$. Combining (\ref{es1li})--(\ref{esjli}) we arrive at
\begin{eqnarray}
|X(t)|\leq \psi\left(|X(0)| +\sum_{i=1}^{i=m}\sup_{-D_i\leq\theta\leq 0}\left|U_i(\theta)\right|\right),\quad \mbox{for all $ 0\leq t\leq D_m$},\label{es3li} 
\end{eqnarray}
where $\psi(s)=\sum_{i=1}^{i=m}\psi_i(s)$. Using the fact that $U_m(t)=\kappa_m\left(P_m(t)\right)$, for all $t\geq0$, with $P_m(t)=X\left(t+D_m\right)$ and the fact that system $\dot{X}=f\left(X,\kappa_1(X),\ldots,\kappa_m(X)\right)$ is globally asymptotically stable we get that $|X(t)|\leq\hat{\beta}_1\left(|X(D_m)|,t-D_m\right)$, for all $t\geq D_m$, for some class $\mathcal{KL}$ function $\hat{\beta}_1$. Hence, using (\ref{es3li}) we get that
\begin{eqnarray}
|X(t)|\leq \hat{\beta}_2\left(\Omega(0),t\right),\quad \mbox{for all $ t\geq 0$},\label{es5li}
\end{eqnarray}
where the class $\mathcal{KL}$ function $\hat{\beta}_2$ is given by $\hat{\beta}_2(s,t)=\hat{\beta}_1\left(\psi(s),\max\left\{0,t-D_m\right\}\right)+\psi(s)e^{-\max\left\{0,t-D_m\right\}}$.

We estimate next $\sup_{t-D_1\leq\theta\leq t}\left|U_1(\theta)\right|$. Since $\kappa_1$ is locally Lipschitz and $\kappa_1(0)=0$, there exists a class $\mathcal{K}_\infty$ function $\hat{\alpha}_1$ such that $\left|\kappa_1(X)\right|\leq \hat{\alpha}_1\left(|X|\right)$, for all $X\in\mathbb{R}^n$. Since for all $t\geq0$ it holds that $U_1(t)=\kappa_1\left(X(t+D_1)\right)$ using (\ref{es5li}) one can conclude that
\begin{eqnarray}
\sup_{t-D_1\leq\theta\leq t}\left|U_1(\theta)\right|\leq\sup_{-D_1\leq\theta\leq 0}\left|U_1(\theta)\right|+\hat{\alpha}_1\left(\hat{\beta}_2\left(\Omega(0),0\right)\right),\quad0\leq t\leq D_1,
\end{eqnarray}
and hence,
\begin{eqnarray}
\sup_{t-D_1\leq\theta\leq t}\left|U_1(\theta)\right|\leq\hat{\alpha}_2\left(\Omega(0)\right),\quad 0\leq t\leq D_1,\label{estimate1}
\end{eqnarray}
where the function $\hat{\alpha}_2(s)=\hat{\alpha}_1\left(\hat{\beta}_2\left(s,0\right)\right)+s$ belongs to class $\mathcal{K}_{\infty}$. Using identical arguments we also get that
\begin{eqnarray}
\sup_{t-D_1\leq\theta\leq t}\left|U_1(\theta)\right|\leq\hat{\alpha}_1\left(\hat{\beta}_2\left(\Omega(0),t\right)\right),\quad t\geq D_1.\label{estimate2}
\end{eqnarray}
Combining (\ref{estimate1}) with (\ref{estimate2}) we arrive at
\begin{eqnarray}
\sup_{t-D_1\leq\theta\leq t}\left|U_1(\theta)\right|\leq\hat{\beta}_3\left(\Omega(0),t\right),\quad\mbox{for all $t\geq 0$},\label{estimate3}
\end{eqnarray}
where the class $\mathcal{KL}$ function $\hat{\beta}_3$ is defined by $\hat{\beta}_3(s,t)=\hat{\alpha}_1\left(\hat{\beta}_2\left(\Omega(0),\max\left\{0,t-D_1\right\}\right)\right)+\hat{\alpha}_2(s)e^{-\max\left\{0,t-D_1\right\}}$. Using the facts that $\kappa_2$ is locally Lipschitz and that $\kappa_2(0)=0$, which allows one to conclude that there exists a class $\mathcal{K}_\infty$ function $\hat{\alpha}_3$ such that $\left|\kappa_2(X)\right|\leq \hat{\alpha}_3\left(|X|\right)$, for all $X\in\mathbb{R}^n$, with similar arguments we get that
\begin{eqnarray}
\sup_{t-D_2\leq\theta\leq t}\left|U_2(\theta)\right|\leq\hat{\beta}_4\left(\Omega(0),t\right),\quad\mbox{for all $t\geq 0$},\label{estimate4}
\end{eqnarray}
where the class $\mathcal{KL}$ function $\hat{\beta}_4$ is defined by $\hat{\beta}_4(s,t)=\hat{\alpha}_3\left(\hat{\beta}_2\left(\Omega(0),\max\left\{0,t-D_2\right\}\right)\right)+\hat{\alpha}_4(s)e^{-\max\left\{0,t-D_2\right\}}$, where the function $\hat{\alpha}_4(s)=\hat{\alpha}_3\left(\hat{\beta}_2\left(s,0\right)\right)+s$ belongs to class $\mathcal{K}_{\infty}$. With the same arguments one can conclude that there exist class $\mathcal{KL}$ functions $\hat{\beta}_{j+2}$, $j=3,\ldots,m$, such that
\begin{eqnarray}
\sup_{t-D_j\leq\theta\leq t}\left|U_j(\theta)\right|\leq\hat{\beta}_{j+2}\left(\Omega(0),t\right),\quad j=3,\ldots,m, \quad\mbox{for all $t\geq 0$}.\label{estimate4j}
\end{eqnarray}
Combining estimates (\ref{es5li}), (\ref{estimate3}), (\ref{estimate4}), and (\ref{estimate4j}) we get (\ref{estimate theorem 2}) with $\hat{\beta}(s,t)=\hat{\beta}_2(s,t)+\sum_{i=1}^{i=m}\hat{\beta}_{i+2}(s,t)$. From (\ref{plant}) using the fact that $U_{i_0}\in C[-D_i,0]$, for all $i=1,\ldots,m$, the Lipschitzness of the vector field $f$ guarantees that $X(t)\in C^1[0,D_1)$. The fact that $U_1(t-D_1)=\kappa_1\left(X(t)\right)$, for all $t\geq D_1$, and the Lipschitzness of $\kappa_1$ guarantee that $X(t)\in C^1(D_1,D_2)$. Since $U_{1_0}$ is compatible with the first feedback law one can conclude that $X(t)\in C^1[0,D_2)$. Analogously, since for $t\geq D_1$ the state $X$ evolves according to $\dot{X}(t)=f\left(X(t),\kappa_1\left(X(t)\right),U_2(t-D_2),\ldots,U_m(t-D_m)\right)$, the fact that $U_{2_0}$ is compatible with the second feedback law and the fact that $U_2(t-D_2)=\kappa_2\left(X(t)\right)$, for all $t\geq D_2$, where $\kappa_2$ is locally Lipschitz, guarantee that $X(t)\in C^1[0,D_3)$. Continuing this procedure it is shown that $X(t)\in C^1[0,\infty)$. Since $U_i(t)=\kappa_i\left(X(t+D_i)\right)$, $i=1,\ldots,m$, the Lipshitzness of $\kappa_i$, $i=1,\ldots,m$, guarantees that $U_i(t)$, $i=1,\ldots,m$, are Lipschitz on $(0,\infty)$.
\end{proof}


\section{Stabilization of the Nonholonomic Unicycle Subject to Distinct Input Delays}
\label{uni}
We consider the following system
\begin{eqnarray}
\dot{X}_1(t)&=&U_2(t-D_2)\cos\left(X_3(t)\right)\label{sysex1}\\
\dot{X}_2(t)&=&U_2(t-D_2)\sin\left(X_3(t)\right)\\
\dot{X}_3(t)&=&U_1(t-D_1),\label{sysexn}
\end{eqnarray} 
which describes the dynamics of the unicycle, where $\left(X_1,X_2\right)$ is the position of the robot, $X_3$ is heading, $U_2$
is speed, and $U_1$ is the turning rate. We consider the following time-varying nominal control law designed in \cite{pomet}
\begin{eqnarray}
U_1(t)&=&-M(t)^2\cos\left(t\right)-M(t)Q(t)\left(1+\cos^2\left(t\right)\right)-X_3(t)\label{u1ex}\\
U_2(t)&=&-M(t)+Q(t)\left(\sin(t)-\cos(t)\right)+Q(t)U_1(t),\label{shit}
\end{eqnarray}
where
\begin{eqnarray}
M(t)&=&X_1(t)\cos\left(X_3(t)\right)+X_2(t)\sin\left(X_3(t)\right)\label{defr}\\
Q(t)&=&X_1(t)\sin\left(X_3(t)\right)-X_2(t)\cos\left(X_3(t)\right)\label{defq},
\end{eqnarray}
which achieves global asymptotic stabilization when $D_1=D_2=0$. We employ next our predictor-feedback design when $0<D_1<D_2$.  We first verify that Assumptions \ref{ass1} and \ref{ass2} are verified for system (\ref{sysex1})--(\ref{sysexn}) under the control laws (\ref{u1ex})--(\ref{defq}). We first note that system $\dot{X}=f\left(X,\omega_1,\omega_2\right)$, where
\begin{eqnarray}
f\left(X,\omega_1,\omega_2\right)=\left[\begin{array}{c}\omega_2\cos\left(X_3\right)\\\omega_2\sin\left(X_3\right)\\\omega_1\end{array}\right],
\end{eqnarray}
is forward complete with respect to $(\omega_1,\omega_2)$, and hence, Assumption \ref{ass1} is satisfied. One can see this by defining 
\begin{eqnarray}
R(X)=\frac{1}{2}\sum_{i=1}^{i=3}X_i^2,
\end{eqnarray}
and readily verifying by employing Young's inequality that 
\begin{eqnarray}
\frac{\partial R(X)}{\partial X}f\left(X,\omega_1,\omega_2\right)&=&X_1\omega_2\cos\left(X_3\right)+X_2\omega_2\sin\left(X_3\right)+X_3\omega_1\nonumber\\
&\leq&R(X)+\frac{1}{2}\left(\omega_1^2+\omega_2^2\right).
\end{eqnarray}
Moreover, under (\ref{u1ex}) the dynamics of the nominal, delay-free system are described by $\dot{X}=f\left(X,\kappa_1\left(t,X\right),\omega_2\right)$, where
\begin{eqnarray}
f\left(X,\kappa_1\left(t,X\right),\omega_2\right)=\left[\begin{array}{c}\omega_2\cos\left(X_3\right)\\\omega_2\sin\left(X_3\right)\\-X_3-M\left(X\right)^2\cos\left(t\right)-M\left(X\right)Q\left(X\right)\left(1+\cos^2\left(t\right)\right)\end{array}\right].\label{r1}
\end{eqnarray} 
Therefore, by defining $S_2(X_3)=\frac{1}{2}X_3^2$ we have that
\begin{eqnarray}
\frac{\partial S_2(X_3)}{\partial X_3}\left(-X_3-M(X)^2\cos\left(t\right)-M(X)Q(X)\left(1+\cos^2\left(t\right)\right)\right)\leq 4\left(X_1^4+X_2^4\right),
\end{eqnarray}
where we employed Young's inequality and the fact that $M^2+Q^2=X_1^2+X_2^2$ which follows from (\ref{defr}) and (\ref{defq}). With $S_1(X_1,X_2)=\frac{1}{4}\left(X_1^4+X_2^4\right)$ we get that
\begin{eqnarray}
\frac{\partial S_1(X_1,X_2)}{\partial X_1}\omega_2\cos\left(X_3\right)+\frac{\partial S_1(X_1,X_2)}{\partial X_2}\omega_2\sin\left(X_3\right)\leq 3S_1+\frac{1}{2}\omega_2^4.
\end{eqnarray}
Hence, defining $S=S_1+S_2$ for system $\dot{X}=f\left(X,\kappa_1\left(t,X\right),\omega_2\right)$ with (\ref{r1}) it follows that
\begin{eqnarray}
\frac{\partial S(X)}{\partial X}f\left(X,\kappa_1\left(t,X\right),\omega_2\right)&\leq& 19 S_1+\frac{1}{2}\omega_2^4\nonumber\\
&\leq&19 S+\frac{1}{2}\omega_2^4.
\end{eqnarray}
Since the differential inequality for $S\left(X(t)\right)$ along the trajectories of the system is linear in $S\left(X(t)\right)$ it follows that system $\dot{X}=f\left(X,\kappa_1\left(t,X\right),\omega_2\right)$ with (\ref{r1}) is forward complete with respect to $\omega_2$. Therefore, Assumption \ref{ass2} holds\footnote{ It can be shown that $\dot{M}(t)=U_2(t-D_2)-Q(t)U_1(t-D_1)$, $\dot{Q}(t)=M(t)U_1(t-D_1)$, and $\dot{X}_3(t)=U_1(t-D_1)$. Therefore, when $D_1>D_2$ in order for Assumption \ref{ass2} to hold, the system $\dot{Y}=f\left(Y,\kappa_1\left(t,Y\right),\omega_2\right)$ with $f\left(Y,\kappa_1\left(t,Y\right),\omega_2\right)=\left[\begin{array}{c}U_2(Y)-Y_2\omega_2\\Y_1\omega_2\\\omega_2\end{array}\right]$, where $U_2(Y)=-Y_1+Y_2\left(\sin(t)-\cos(t)\right)-Y_1^2Y_2\cos\left(t\right)-Y_1Y_2^2\left(1+\cos^2\left(t\right)\right)-Y_2Y_3$, has to be forward complete with respect to $\omega_2$. However, this might not be the case. Consider, for example, the case in which $D_2=0$, $\omega_2\equiv0$, $Y_2(0)=1$, $Y_3(0)=0$, and hence, $Y_2(t)=1$, for all $t\geq 0$. The state $Y_1$ satisfies $\dot{Y}_1=-Y_1+\sin(t)-\cos(t)-Y_1^2\cos\left(t\right)-Y_1\left(1+\cos^2\left(t\right)\right)$, for all $t\geq0$. We show next that $Y_1$ escapes to infinity before $t=\frac{\pi}{4}$. Choose $Y_1(0)=-\beta<0$, where $\beta$ is sufficiently large such that $Y_1(t)<0$ for all $t\leq\frac{\pi}{4}$. As long as $Y_1(t)<0$ and $t\leq\frac{\pi}{4}$ it holds that $\dot{Y}_1\leq-3Y_1-Y_1^2\frac{1}{2}$. Hence, using the comparison principle we get that $Y_1(t)\leq -\frac{{6e^{-3t}\beta}}{6+\beta\left(e^{-3t}-1\right)}$. Choosing, for example, $\beta=12$ we have that  $Y_1(t)\leq -\frac{{12}}{2-e^{3t}}$. Hence, $Y_1(t)<0$ for all $t\leq\frac{\log(2)}{3}\approx0.23<\frac{\pi}{4}\approx 0.79$. Moreover, $|Y_1|\to\infty$ before $t=\frac{\log(2)}{3}$ (which also implies that $|X|\to\infty$).}. The closed-loop system in the case $D_1=D_2=0$ is uniformly globally asymptotically stable as it is proved in \cite{pomet}, and hence, one can apply, Theorem \ref{theorem2}.

The predictor-feedback control law is given by
\begin{eqnarray}
U_1(t)&=&-M_1(t)^2\cos\left(t+D_1\right)-M_1(t)Q_1(t)\left(1+\cos^2\left(t+D_1\right)\right)-P_{1_{X_3}}(t)\label{u1ex1}\\
U_2(t)&=&-M_2(t)+Q_2(t)\left(\sin(t+D_2)-\cos(t+D_2)\right)+Q_2(t)\nonumber\\
&&\times\left(-M_2(t)^2\cos\left(t+D_2\right)-M_2(t)Q_2(t)\left(1+\cos^2\left(t+D_2\right)\right)-P_{2_{X_3}}(t)\right),\label{u2ex1}
\end{eqnarray}
where
\begin{eqnarray}
M_i(t)&=&P_{i_{X_1}}(t)\cos\left(P_{i_{X_3}}(t)\right)+P_{i_{X_2}}(t)\sin\left(P_{i_{X_3}}(t)\right),\quad i=1,2\label{defr1}\\
Q_i(t)&=&P_{i_{X_1}}(t)\sin\left(P_{i_{X_3}}(t)\right)-P_{i_{X_2}}(t)\cos\left(P_{i_{X_3}}(t)\right),\quad i=1,2\label{defq1}.
\end{eqnarray}
The predictors are given by
\begin{eqnarray}
P_{1_{X_1}}(t)&=&X_1(t)+\int_{t-D_1}^tU_2(\theta-D_{21})\cos\left(P_{1_{X_3}}(\theta)\right)d\theta\label{p1ex}\\
P_{1_{X_2}}(t)&=&X_2(t)+\int_{t-D_1}^tU_2(\theta-D_{21})\sin\left(P_{1_{X_3}}(\theta)\right)d\theta\\
P_{1_{X_3}}(t)&=&X_3(t)+\int_{t-D_1}^tU_1(\theta)d\theta\label{p1ex1}
\end{eqnarray}
and
\begin{eqnarray}
P_{2_{X_1}}(t)&=&P_{1_{X_1}}(t)+\int_{t-D_{21}}^tU_2(\theta)\cos\left(P_{2_{X_3}}(\theta)\right)d\theta\label{p2ex}\\
P_{2_{X_2}}(t)&=&P_{1_{X_2}}(t)+\int_{t-D_{21}}^tU_2(\theta)\sin\left(P_{2_{X_3}}(\theta)\right)d\theta\\
P_{2_{X_3}}(t)&=&P_{1_{X_3}}(t)+\int_{t-D_{21}}^t\kappa_1(\theta+D_2,P_{2_{X_1}}(\theta),P_{2_{X_2}}(\theta),P_{2_{X_3}}(\theta))d\theta,\label{p2ex1}
\end{eqnarray}
where for all $t-D_{21}\leq\theta\leq t$
\begin{eqnarray}
\kappa_1(\theta+D_2,P_{2_{X_1}}(\theta),P_{2_{X_2}}(\theta),P_{2_{X_3}}(\theta))&=&-M_2(\theta)^2\cos\left(\theta+D_2\right)-M_2(\theta)Q_2(\theta)\nonumber\\
&&\times\left(1+\cos^2\left(\theta+D_2\right)\right)-P_{2_{X_3}}(\theta).\label{pl}
\end{eqnarray}
Note that the $D_1$-time units ahead predictors, namely $P_{1_{X_i}}$, $i=1,2,3$, given by (\ref{p1ex})--(\ref{p1ex1}) can be computed explicitly in terms of the history of $U_1(\theta)$ on the interval $\theta\in[t-D_1,t]$, the history of $U_2(\theta)$ on the interval $\theta\in[t-D_2,t-D_{21}]$, and the current states $X_i(t)$, $i=1,2,3$, as
\begin{eqnarray}
P_{1_{X_1}}(t)&=&X_1(t)+\int_{t-D_1}^tU_2(\theta-D_{21})\cos\left(X_3(t)+\int_{t-D_1}^{\theta}U_1(s)ds\right)d\theta\label{p1exex}\\
P_{1_{X_2}}(t)&=&X_2(t)+\int_{t-D_1}^tU_2(\theta-D_{21})\sin\left(X_3(t)+\int_{t-D_1}^{\theta}U_1(s)ds\right)d\theta\\
P_{1_{X_3}}(t)&=&X_3(t)+\int_{t-D_1}^tU_1(\theta)d\theta\label{p1ex1ex}.
\end{eqnarray}
This is not possible for the $D_2$-time units ahead predictors, namely $P_{2_{X_i}}$, $i=1,2,3$, given by (\ref{p2ex})--(\ref{pl}), since in this case $P_{2_{X_3}}$ defined in (\ref{p2ex1}) can not be solved explicitly in terms of the current states of the plant and the history of the actuator states.




\section{Application to Linear Systems}
\label{linear section}
We specialize our control design to the case of linear systems in which case the control laws are given explicitly. We consider the following system
\begin{eqnarray}
\dot{X}(t)=AX(t)+\sum_{i=1}^{i=m}b_iU_i(t-D_i),\label{kl}
\end{eqnarray}
which can be written as 
\begin{eqnarray}
\dot{X}(t)&=&AX(t)+\sum_{i=1}^{i=m}b_iu_i(0,t)\label{plant1lin}\\
\partial_t u_i(x,t)&=&\partial_x u_i(x,t),\quad x\in(0,D_i),\quad i=1,\ldots,m\label{pde1lin}\\
u_i\left(D_i,t\right)&=&U_i(t),\quad i=1,\ldots,m\label{pde2lin},
\end{eqnarray}
where $A$ is an $n\times n$ matrix and $b_i$, $i=1,\ldots,m$ are $n$-dimensional vectors. System (\ref{plant1lin})--(\ref{pde2lin}) is the linear version of system (\ref{plant1})--(\ref{pde2}). We assume linear nominal feedback control laws, that is, in the delay-free case we have $U_i(t)=k_i^TX(t)$, and hence, Assumption \ref{ass3} is satisfied with $\kappa_i(X)=k_i^TX$, $i=1,\ldots,m$, under the assumption that the pair $\left(A,\left[\begin{array}{ccc}b_1&\ldots&b_m\end{array}\right]\right)$ is stabilizable. Note that Assumptions \ref{ass1} and \ref{ass2} hold for the case of linear systems under linear nominal feedback controllers. We first re-write the predictor states in their PDE representation, namely (\ref{p1ode})--(\ref{pmb}), for the case of linear systems given by (\ref{plant1lin})--(\ref{pde2lin}) as
\begin{eqnarray}
\partial_xp_1(x,t)&=&Ap_1(x,t)+\sum_{i=1}^{i=m}b_iu_i(x,t),\quad x\in[0,D_1]\label{p1odelin}\\
\partial_xp_2(x,t)&=&\left(A+b_1k_1^T\right)p_2(x,t)+\sum_{i=2}^{i=m}b_iu_i(x,t),\quad x\in[D_1,D_2]\label{p2odelin}\\
&\vdots&\nonumber\\
\partial_xp_m(x,t)&=&\left(A+\sum_{i=1}^{i=m-1}b_ik_i^T\right)p_m(x,t)+b_mu_m(x,t),\quad x\in[D_{m-1},D_m],\label{pmodelin}
\end{eqnarray}
with initial conditions given by (\ref{p1b})--(\ref{pmb}). Solving explicitly the linear ODEs in $x$ (\ref{p1odelin})--(\ref{pmodelin}) and using the boundary conditions  (\ref{p1b})--(\ref{pmb}) we get that
\begin{eqnarray}
p_1(x,t)&=&e^{Ax}X(t)+\int_0^xe^{A(x-y)}\sum_{i=1}^{i=m}b_iu_i(y,t)dy,\quad x\in[0,D_1]\label{p1odelinexp}\\
p_2(x,t)&=&e^{A_1(x-D_1)}p_1(D_1,t)+\int_{D_1}^xe^{A_1(x-y)}\sum_{i=2}^{i=m}b_iu_i(y,t)dy,\quad x\in[D_1,D_2]\label{p2odelinexp}\\
&\vdots&\nonumber\\
p_m(x,t)&=&e^{A_{m-1}(x-D_{m-1})}p_{m-1}(D_{m-1},t)+\int_{D_{m-1}}^xe^{A_{m-1}(x-y)}b_mu_m(y,t)dy,\nonumber\\
&& x\in[D_{m-1},D_m],\label{pmodelinexp}
\end{eqnarray}
where we used the notation
\begin{eqnarray}
A_0&=&A\label{a0}\\
A_i&=&A+\sum_{j=1}^{j=i}b_jk_j^T,\quad i=1,\ldots,m.\label{ai}
\end{eqnarray}
The control laws are given by 
\begin{eqnarray}
U_i(t)&=&\kappa_i^Tp_i(D_i,t),\quad i=1,\ldots,m\label{connew1lin}.
\end{eqnarray}

\begin{theorem}
\label{theoremlin}
Consider the closed-loop system consisting of the plant (\ref{plant1lin})--(\ref{pde2lin}) and the control laws (\ref{connew1lin}), (\ref{p1odelinexp})--(\ref{pmodelinexp}). Let the pair $\left(A,\left[\begin{array}{ccc}b_1&\ldots&b_m\end{array}\right]\right)$ be stabilizable. There exist positive constants $\lambda$ and $\mu$ such that for all initial conditions $X_0\in\mathbb{R}^n$ and $u_{i_0}\in H^1(0,D_i)$, $i=1,\ldots,m$, which are compatible with the feedback laws, the closed-loop system has a unique solution $\left(X(t),u_1(\cdot,t),\ldots,u_m(\cdot,t)\right)\in C\left([0,\infty);\mathbb{R}^n\times H^1(0,D_1)\times\ldots\times H^1(0,D_m)\right)\cap C^1\left([0,\infty);\mathbb{R}^n\times L^2(0,D_1)\times\ldots\times L^2(0,D_m)\right)$ and the following holds
\begin{eqnarray}
\Gamma(t)\leq\mu \Gamma(0) e^{-\lambda t}, \quad\mbox{for all $t\geq0$},\label{estimate theorem 1lin}
\end{eqnarray}
where
\begin{eqnarray}
\Gamma(t)=|X(t)|+\sum_{i=1}^{i=m}\int_0^{D_i}u_i(x,t)^2dx.\label{normlin}
\end{eqnarray}
\end{theorem}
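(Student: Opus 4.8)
The plan is to specialize the backstepping/Lyapunov construction of Section~\ref{sec3} to linear data and to exploit linearity to upgrade the $\mathcal{KL}$ estimate to an exponential one. Write $\mathcal{E}(t)=|X(t)|^2+\sum_{i=1}^{m}\int_0^{D_i}u_i(x,t)^2dx$ and $\mathcal{E}_w(t)=|X(t)|^2+\sum_{i=1}^{m}\int_0^{D_i}w_i(x,t)^2dx$. First, for each $i$ I would use the backstepping transformation of Lemma~\ref{leminvz}, which here reads $w_i(x,t)=u_i(x,t)-k_i^Tp_i(x,t)$ with the $p_i$ given explicitly by (\ref{p1odelinexp})--(\ref{pmodelinexp}). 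By Lemmas~\ref{leminvz}--\ref{inversezeta} it maps (\ref{plant1lin})--(\ref{pde2lin}), (\ref{connew1lin}) onto the target system
\begin{equation}
\dot X(t)=A_mX(t)+\sum_{i=1}^{m}b_iw_i(0,t),\qquad \partial_tw_i=\partial_xw_i\ \text{on}\ (0,D_i),\qquad w_i(D_i,t)=0,
\end{equation}
where $A_m=A+\sum_{j=1}^{m}b_jk_j^T$ is Hurwitz for the chosen gains (possible since the pair is stabilizable). Since (\ref{p1odelinexp})--(\ref{pmodelinexp}) and its inverse (in which $\pi_i(x,t)=X(t+x)$, exactly as in the proof of Theorem~\ref{theorem1}) are bounded linear maps between $X(t)$ and the actuator $L^2$-profiles, there are constants $\gamma_1,\gamma_2>0$ with $\mathcal{E}_w(t)\le\gamma_1\mathcal{E}(t)$ and $\mathcal{E}(t)\le\gamma_2\mathcal{E}_w(t)$ --- the linear counterpart of Lemmas~\ref{lemmabdirect}--\ref{lemmaeq}. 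This reduces the theorem to establishing exponential decay of $\mathcal{E}_w$ for the target system.

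Second, I would take the Lyapunov functional
\begin{equation}
V(t)=X(t)^TPX(t)+\sum_{i=1}^{m}a_i\int_0^{D_i}(1+x)\,w_i(x,t)^2\,dx,
\end{equation}
with $P=P^T>0$ solving $PA_m+A_m^TP=-Q$ for some $Q=Q^T>0$, and $a_i>0$ to be fixed. Integration by parts in $x$ together with $w_i(D_i,t)=0$ gives $\frac{d}{dt}\int_0^{D_i}(1+x)w_i^2dx=-w_i(0,t)^2-\int_0^{D_i}w_i^2dx$, hence
\begin{equation}
\dot V=-X^TQX+2\sum_{i=1}^{m}w_i(0,t)(Pb_i)^TX-\sum_{i=1}^{m}a_i\Big(w_i(0,t)^2+\int_0^{D_i}w_i(x,t)^2dx\Big).
\end{equation}
Young's inequality $2w_i(0,t)(Pb_i)^TX\le a_iw_i(0,t)^2+a_i^{-1}|Pb_i|^2|X|^2$ cancels the $w_i(0,t)^2$ contributions, and choosing the $a_i$ so large that $\theta:=\lambda_{\min}(Q)-\sum_{i=1}^{m}a_i^{-1}|Pb_i|^2>0$ yields $\dot V\le-\theta|X|^2-\sum_{i=1}^{m}a_i\int_0^{D_i}w_i^2dx$. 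Combining with $V\le\lambda_{\max}(P)|X|^2+\max_i(1+D_i)\sum_{i=1}^{m}a_i\int_0^{D_i}w_i^2dx$ gives $\dot V\le-2\lambda V$ with $2\lambda=\min\{\theta/\lambda_{\max}(P),\,1/\max_i(1+D_i)\}$, so $V(t)\le V(0)e^{-2\lambda t}$. Since $V$ is equivalent to $\mathcal{E}_w$, combining with the norm equivalences above yields $\mathcal{E}(t)\le\mu_0\,\mathcal{E}(0)e^{-2\lambda t}$, from which (\ref{estimate theorem 1lin}) follows after relabelling $\mu$ and $\lambda$.

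For well-posedness in the stated class, the target system has the explicit solution $w_i(x,t)=w_{i_0}(t+x)$ for $t+x<D_i$ and $0$ otherwise, which lies in $C([0,\infty);H^1(0,D_i))\cap C^1([0,\infty);L^2(0,D_i))$ precisely when $w_{i_0}\in H^1(0,D_i)$ and $w_{i_0}(D_i)=0$; the latter is the compatibility condition pushed through the backstepping transformation, and $w_{i_0}\in H^1$ follows from $u_{i_0}\in H^1$ via (\ref{p1odelinexp})--(\ref{pmodelinexp}). Then $X$ solves a finite-dimensional linear ODE with continuous forcing $\sum_i b_iw_i(0,t)$, so $X\in C^1([0,\infty);\mathbb{R}^n)$, and inverting the transformation via (\ref{eq2in})--(\ref{bacmnewinv}) with $\pi_i(x,t)=X(t+x)$ recovers $u_i$ in the asserted regularity; uniqueness follows from uniqueness for the transport equations (Sections~2.1 and~2.3 of \cite{coron}).

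The step I expect to be the main obstacle is the one in the Lyapunov computation: balancing, \emph{simultaneously across all $m$ channels}, the cross terms $2w_i(0,t)(Pb_i)^TX$ against the boundary terms $a_iw_i(0,t)^2$ and the coercive term $X^TQX$, i.e.\ checking that a single choice of weights $a_i$ (with the delays entering only through $\max_i(1+D_i)$) renders $\dot V$ negative definite; the remainder is a routine specialization of Section~\ref{sec3} and standard linear transport-PDE bookkeeping.
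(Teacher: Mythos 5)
Your proof is correct in its essentials but follows a genuinely different route from the paper's. The paper does not redo any Lyapunov analysis for Theorem \ref{theoremlin}: its proof is an inductive verification that the predictor states (\ref{p1odelinexp})--(\ref{pmodelinexp}), concatenated as in (\ref{go}) and rescaled by $e^{-A_{i-1}x}$, coincide with the quantities $v_{i-1}$ in (\ref{91})--(\ref{92}), so that the transformations (\ref{eq2wew})--(\ref{bacmnewwew}) and the control laws (\ref{connew1lin}) are identical to those of \cite{daisuke new}, \cite{daisuke}; the exponential estimate and well-posedness are then imported wholesale from Theorem 1 of \cite{daisuke}. You instead give a self-contained argument: specialize Lemmas \ref{leminvz}--\ref{inversezeta} to get the linear target system with $A_m$ Hurwitz, establish two-sided $L^2$ equivalence of the $(X,u)$ and $(X,w)$ variables from the explicit Volterra structure of the direct and inverse transformations, and run the standard weighted functional $X^TPX+\sum_i a_i\int_0^{D_i}(1+x)w_i(x,t)^2dx$. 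Your computation is sound (the boundary terms vanish thanks to $w_i(D_i,t)=0$, and the step you flag as delicate is in fact routine: each cross term is absorbed by its own $a_iw_i(0,t)^2$, leaving only the condition $\lambda_{\min}(Q)>\sum_i a_i^{-1}|Pb_i|^2$, met by taking all $a_i$ large), and your well-posedness argument mirrors the paper's treatment in Theorem \ref{theorem1}. Two small remarks. First, the transformation should read $w_i(x,t)=u_i(x,t)-k_i^Tg_i(x,t)$ with $g_i$ the piecewise concatenation (\ref{go}) of $p_1,\ldots,p_i$ over $[0,D_i]$, not $k_i^Tp_i(x,t)$, since $p_i$ is defined only on $[D_{i-1},D_i]$. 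Second, because $\Gamma$ in (\ref{normlin}) mixes $|X|$ with the \emph{squares} $\int_0^{D_i}u_i^2dx$, passing from your homogeneous estimate $\mathcal{E}(t)\leq\mu_0\mathcal{E}(0)e^{-2\lambda t}$ to (\ref{estimate theorem 1lin}) is not a pure relabelling of constants for small $\Gamma(0)$; this inhomogeneity is, however, a feature of the theorem statement itself and is not addressed by the paper's citation-based proof either. What your approach buys is independence from \cite{daisuke}; what the paper's buys is the identification of the general nonlinear predictor construction with the known explicit multi-input linear design, which is the point the authors actually wish to make in this section.
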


Note that with definitions $p_i(x,t)=P_i(t+x-D_i)$, for all $x\in [D_{i-1},D_i]$, with $D_0=0$ (see Section \ref{sudpre}) the predictors (\ref{p1odelinexp})--(\ref{pmodelinexp}) can be written as 
\begin{eqnarray}
P_1(\theta)&=&e^{A(\theta-t+D_1)}X(t)+\int_{t-D_1}^{\theta}e^{A(\theta-s)}\sum_{i=1}^{i=m}b_iU_i(s-D_{i1})ds,\quad t-D_1\leq\theta\leq t\label{P11inlin}\\
P_2(\theta)&=&e^{A_1(\theta-t+D_{21})}P_1(t)+\int_{t-D_{21}}^{\theta}e^{A_1(\theta-s)}\sum_{i=2}^{i=m}b_iU_i(s-D_{i2})ds,\quad t-D_{21}\leq\theta\leq t\label{conninlin}\\
&\vdots&\nonumber\\
P_m(\theta)&\!\!\!\!=\!\!\!\!&e^{A_{m-1}(\theta-t+D_{mm-1})}P_{m-1}(t)+\int_{t-D_{mm-1}}^{\theta}\!e^{A_{m-1}(\theta-s)}b_mU_m(s)ds,\quad\!\! t-D_{mm-1}\leq\theta\leq t\label{conn1inlin},
\end{eqnarray}
where the matrices $A_i$, $i=1,\ldots,m-1$, are given in (\ref{a0}), (\ref{ai}). With this notation the control laws (\ref{connew1lin}) are written as
\begin{eqnarray}
U_i(t)&=&\kappa_i^TP_i(t),\quad i=1,\ldots,m\label{connew1lindelay}.
\end{eqnarray}
We have the following corollary as an immediate consequence of Theorem and relation (\ref{solu}). 

\begin{corollary}[The version of Theorem \ref{theoremlin} in standard delay notation]
\label{cor1n}
Consider the closed-loop system consisting of the plant (\ref{kl}) and the control laws (\ref{connew1lindelay}), (\ref{P11inlin})--(\ref{conn1inlin}). Under the assumption that the pair $\left(A,\left[\begin{array}{ccc}b_1&\ldots&b_m\end{array}\right]\right)$ is stabilizable the following holds
\begin{eqnarray}
\left|X(t)\right|+\sum_{i=1}^{i=m}\int_{t-D_i}^{t}U_i(\theta)^2d\theta\leq\mu\left(\left|X(0)\right|+\sum_{i=1}^{i=m}\int_{-D_i}^0U_i(\theta)^2d\theta\right)e^{-\lambda t}, \quad\mbox{for all $t\geq0$}.\label{estimate cor 1n}
\end{eqnarray}
\end{corollary}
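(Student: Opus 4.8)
\emph{Plan.} The plan is to observe that this corollary is merely Theorem~\ref{theoremlin} rewritten in delay notation and then to prove that theorem. Indeed, under the identifications $u_i(x,t)=U_i(t+x-D_i)$ from (\ref{solu}) and $p_i(x,t)=P_i(t+x-D_i)$ for $x\in[D_{i-1},D_i]$ (with $D_0:=0$), the plant (\ref{plant1lin})--(\ref{pde2lin}) is exactly (\ref{kl}), the control laws (\ref{connew1lin}), (\ref{p1odelinexp})--(\ref{pmodelinexp}) become (\ref{connew1lindelay}), (\ref{P11inlin})--(\ref{conn1inlin}), and $\int_0^{D_i}u_i(x,t)^2\,dx=\int_{t-D_i}^{t}U_i(\theta)^2\,d\theta$; hence (\ref{estimate cor 1n}) follows from (\ref{estimate theorem 1lin}) and it suffices to prove Theorem~\ref{theoremlin}. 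For the latter I would specialize the backstepping/Lyapunov framework of Section~\ref{sec3} to $\kappa_i(X)=k_i^TX$ and replace the $\mathcal{KL}$ conclusion of Theorem~\ref{theorem1} by a genuine exponential one through a \emph{quadratic} Lyapunov functional.

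\emph{Step 1 (backstepping).} Since a linear plant with linear nominal feedback is the special case of (\ref{plant1})--(\ref{pde2}), (\ref{connew1}) with $\kappa_i(X)=k_i^TX$, Lemma~\ref{leminvz} applies directly: the transformation (\ref{eq2})--(\ref{bacmnew}), which here reads $w_i(x,t)=u_i(x,t)-k_i^Tp_j(x,t)$ for $x\in[D_{j-1},D_j]$, maps the plant into the target system $\dot X(t)=A_mX(t)+\sum_{i=1}^mb_iw_i(0,t)$, $\partial_tw_i=\partial_xw_i$ on $(0,D_i)$, $w_i(D_i,t)=0$, where $A_m=A+\sum_{j=1}^mb_jk_j^T$ as in (\ref{a0})--(\ref{ai}); stabilizability permits the gains $k_j$ to be chosen so that $A_m$ is Hurwitz. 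By Lemma~\ref{inversezeta} the inverse transformation (\ref{eq2in})--(\ref{bacmnewinv}) is again a bounded linear map (the $\pi_i$ are linear in $X$ and the $w_k$), so the direct and inverse maps furnish a two-sided equivalence between $|X(t)|^2+\sum_i\|u_i(\cdot,t)\|_{L^2(0,D_i)}^2$ and $|X(t)|^2+\sum_i\|w_i(\cdot,t)\|_{L^2(0,D_i)}^2$, i.e. the linear analogues of Lemmas~\ref{lemmabdirect}--\ref{lemmaeq}.

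\emph{Step 2 (exponential decay).} Let $P=P^T>0$ solve $PA_m+A_m^TP=-Q$ with $Q=Q^T>0$, and set
\[
V(t)=X(t)^TPX(t)+\sum_{i=1}^ma_i\int_0^{D_i}e^{c_ix}w_i(x,t)^2\,dx,\qquad a_i,c_i>0 .
\]
Differentiating along the target system and integrating the transport terms by parts, the boundary conditions $w_i(D_i,t)=0$ give
\[
\dot V(t)=-X^TQX+2\sum_{i=1}^m(b_i^TPX)\,w_i(0,t)-\sum_{i=1}^ma_iw_i(0,t)^2-\sum_{i=1}^ma_ic_i\int_0^{D_i}e^{c_ix}w_i(x,t)^2\,dx .
\]
Using $2(b_i^TPX)w_i(0,t)\le\tfrac{a_i}{2}w_i(0,t)^2+\tfrac{2}{a_i}|Pb_i|^2|X|^2$ and choosing each $a_i$ large enough that $\lambda_{\min}(Q)-\sum_i\tfrac{2}{a_i}|Pb_i|^2>0$, one obtains $\dot V(t)\le-\delta|X|^2-\sum_ia_ic_i\int_0^{D_i}e^{c_ix}w_i^2\,dx\le-2\lambda V(t)$ with $2\lambda=\min\{\delta/\lambda_{\max}(P),c_1,\dots,c_m\}$, so $V(t)\le V(0)e^{-2\lambda t}$. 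Because $V$ is equivalent, up to the usual quadratic bounds, to $|X(t)|^2+\sum_i\|w_i(\cdot,t)\|_{L^2}^2$ and hence, by Step~1, to $|X(t)|^2+\sum_i\|u_i(\cdot,t)\|_{L^2}^2$, the estimate (\ref{estimate theorem 1lin}) for $\Gamma$ in (\ref{normlin}) follows after adjusting $\mu$ and $\lambda$.

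\emph{Step 3 (well-posedness) and the hard part.} Compatibility of $u_{i_0}$ with the feedback laws forces $w_{i_0}(D_i)=0$, and boundedness of the backstepping map on $H^1$ gives $w_{i_0}\in H^1(0,D_i)$; the explicit transport solution (\ref{solution w12}) then lies in $C([0,\infty);H^1(0,D_i))\cap C^1([0,\infty);L^2(0,D_i))$, the ODE yields $X\in C^1([0,\infty);\mathbb{R}^n)$, and the inverse transformation returns $u_i$ in the stated spaces; the delay-notation reduction above then gives the corollary. The step I expect to be the main obstacle is the weight bookkeeping in Step~2: choosing $a_i$ and $c_i$ so that the coupling terms $2(b_i^TPX)w_i(0,t)$ --- the only residue of the ODE/PDE interconnection --- are simultaneously dominated by the $-\lambda_{\min}(Q)|X|^2$ margin and by the boundary terms $-a_iw_i(0,t)^2$ harvested from integration by parts, while still leaving a strictly negative interior decay rate; the accompanying check that the linear backstepping transformation and its inverse are bounded on $H^1$ in both directions is routine, as is the classical transport-PDE well-posedness.
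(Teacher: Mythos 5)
Your reduction of the corollary to Theorem \ref{theoremlin} via $u_i(x,t)=U_i(t+x-D_i)$ and $\int_0^{D_i}u_i(x,t)^2\,dx=\int_{t-D_i}^tU_i(\theta)^2\,d\theta$ is exactly what the paper does --- that identification is the paper's entire proof of the corollary. Where you genuinely diverge is in how Theorem \ref{theoremlin} itself is established. The paper does not redo a Lyapunov argument: it proves, by an inductive algebraic computation on the explicit linear predictors (the identity $e^{-A_{i-1}x}g_i(x,t)=v_{i-1}(x,t)$ together with the constancy of $v_{i-1}(\cdot,t)$ beyond $D_i$), that its backstepping transformations specialize exactly to the transformations (67)--(68) of \cite{daisuke}, and then invokes Theorem 1 of \cite{daisuke} for the exponential estimate. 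You instead inline a self-contained proof: specialize Lemma \ref{leminvz} to $\kappa_i(X)=k_i^TX$, obtain the target system $\dot X=A_mX+\sum_ib_iw_i(0,t)$ with $A_m$ Hurwitz by stabilizability, and run the weighted quadratic functional $V=X^TPX+\sum_ia_i\int_0^{D_i}e^{c_ix}w_i^2\,dx$; your bookkeeping of the $a_i$, $c_i$ and the harvested boundary terms is correct, and the boundedness of the linear Volterra-type direct and inverse transformations gives the required norm equivalence. This buys self-containedness and an explicit decay rate, at the cost of not exhibiting the (independently interesting) fact that the nonlinear design collapses exactly to the known linear one. One caveat, which applies equally to the paper's own statement: the quadratic Lyapunov argument naturally yields exponential decay of $|X(t)|^2+\sum_i\|u_i(\cdot,t)\|_{L^2}^2$, and the final passage ``after adjusting $\mu$ and $\lambda$'' to the non-homogeneous functional $\Gamma(t)=|X(t)|+\sum_i\int_0^{D_i}u_i(x,t)^2\,dx$ of (\ref{normlin}) and (\ref{estimate cor 1n}) is not a mere constant adjustment --- that mixed norm scales inhomogeneously under $(X_0,u_{i_0})\mapsto(\epsilon X_0,\epsilon u_{i_0})$, so the clean estimate your argument delivers is the one with both terms squared (or both unsquared); you should state it in that form rather than attribute the gap to a choice of $\mu$.
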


\begin{proof}
We prove Theorem \ref{theoremlin} by showing that the control laws (\ref{connew1lin}), (\ref{p1odelinexp})--(\ref{pmodelinexp}) are identical to the ones introduced in \cite{daisuke new} and \cite{daisuke}, and hence, Theorem \ref{theoremlin} is proved by following the proof of Theorem 1 in \cite{daisuke}. Equivalently we show that the backstepping transformations (\ref{leminvz}) specialized to the linear case are identical to the backstepping transformations introduced in \cite{daisuke}. Toward that end define
\begin{eqnarray}
\phi_i(x)&=&\left\{\begin{array}{ll}x,&x\leq D_i\\D_i,&x\geq D_{i}\end{array}\right.,\quad i=1,\ldots,m\label{defphi}\\
g_i(x,t)&=&\left\{\begin{array}{ll}p_1(x,t),&x\in[0,D_1]\\p_2(x,t),&x\in[D_1,D_2]\\\vdots\\p_i(x,t),&x\in[D_{i-1},D_i]\end{array}\right.,\quad i=1,\ldots,m.\label{go}
\end{eqnarray}
Then, using (\ref{p1odelinexp})--(\ref{pmodelinexp}) it follows that
\begin{eqnarray}
e^{-A_{i-1}x}g_i(x,t)=v_{i-1}(x,t),\quad x\in[0,D_{i}],\quad i=1,\ldots,m,\label{com}
\end{eqnarray}
where for all $x\in[0,D_m]$
\begin{eqnarray}
v_i(x,t)&=&e^{-A_i\phi_i(x)}e^{A_{i-1}\phi_i(x)}v_{i-1}(x,t)+\int_{\phi_i(x)}^{\phi_{i+1}(x)}e^{-A_iy}\sum_{j=i+1}^{j=m}b_ju_j(y,t)dy\label{91}\\
v_0(x,t)&=&X(t)+\int_0^{\phi_1(x)}e^{-Ay}\sum_{j=1}^{j=m}b_ju_j(y,t)dy.\label{92}
\end{eqnarray}
We show this by induction. For all $x\in[0,D_1]$ it holds that 
\begin{eqnarray}
e^{-Ax}g_1(x,t)&=&e^{-Ax}p_1(x,t)\nonumber\\
&=&X(t)+\int_0^{\phi_1(x)}e^{-Ay}\sum_{j=1}^{j=m}b_ju_j(y,t)dy\nonumber\\
&=&v_0(x),
\end{eqnarray}
where we used (\ref{p1odelinexp}) and (\ref{defphi}). Assume now that (\ref{com}) holds for some $i$. It follows that
\begin{eqnarray}
e^{-A_ix}g_{i+1}(x,t)&=&e^{-A_ix}\left\{\begin{array}{ll}p_1(x,t),&x\in[0,D_1]\\p_2(x,t),&x\in[D_1,D_2]\\\vdots\\p_i(x,t),&x\in[D_{i-1},D_i]\\p_{i+1}(x,t),&x\in[D_i,D_{i+1}]\end{array}\right.\nonumber\\
&=&e^{-A_ix}\left\{\begin{array}{ll}e^{A_{i-1}x}v_{i-1}(x,t),&x\in[0,D_i]\\p_{i+1}(x,t),&x\in[D_i,D_{i+1}]\end{array}\right.,\label{f1}
\end{eqnarray}
for all $x\in[0,D_{i+1}]$. Using (\ref{p1odelinexp})--(\ref{pmodelinexp}) and (\ref{defphi}) we get from (\ref{f1}) that
\begin{eqnarray}
e^{-A_ix}g_{i+1}(x,t)&=&\left\{\begin{array}{ll}e^{-A_i\phi_i(x)}e^{A_{i-1}\phi_i(x)}v_{i-1}(x,t),&x\in[0,D_i]\\e^{-A_iD_{i}}p_{i}(D_{i},t)+\int_{D_{i}}^xe^{-A_{i}y}\sum_{j=i+1}^{j=m}b_ju_j(y,t)dy,&x\in[D_i,D_{i+1}]\end{array}\right..\label{f2}
\end{eqnarray}
Using (\ref{go}) and (\ref{com}) it follows that 
\begin{eqnarray}
e^{-A_{i-1}D_i}g_i(D_i,t)&=&e^{-A_{i-1}D_i}p_i(D_i,t)\nonumber\\
&=&v_{i-1}(D_i,t),
\end{eqnarray}
and hence,
\begin{eqnarray}
p_i(D_i,t)=e^{A_{i-1}D_i}v_{i-1}(D_i,t).\label{sh}
\end{eqnarray}
Combining (\ref{f2}) with (\ref{sh}) we arrive at
\begin{eqnarray}
e^{-A_ix}g_{i+1}(x,t)&=&\left\{\begin{array}{ll}e^{-A_i\phi_i(x)}e^{A_{i-1}\phi_i(x)}v_{i-1}(x,t),&x\in[0,D_i]\\e^{-A_iD_{i}}e^{A_{i-1}D_i}v_{i-1}(D_{i},t)+\int_{D_{i}}^xe^{-A_{i}y}\sum_{j=i+1}^{j=m}b_ju_j(y,t)dy,&x\in[D_i,D_{i+1}]\end{array}\right..\label{f3}
\end{eqnarray}
We then observe from (\ref{defphi}) that $\phi_i(x)=\phi_{i+1}(x)=x$, for all $x\leq D_i$, and hence,
\begin{eqnarray}
\int_{\phi_i(x)}^{\phi_{i+1}(x)}e^{-A_iy}\sum_{j=i+1}^{j=m}b_ju_j(y,t)dy=0,\quad x\in[0,D_i].
\end{eqnarray}
Moreover, using (\ref{defphi}) we get that 
\begin{eqnarray}
e^{-A_iD_{i}}e^{A_{i-1}D_i}=e^{-A_i\phi_i(x)}e^{A_{i-1}\phi_i(x)},\quad x\in[D_i,D_{i+1}].
\end{eqnarray}
The proof is completed if we show that 
\begin{eqnarray}
v_{i-1}(D_{i},t)=v_{i-1}(x,t),\quad \mbox{for all $x\in[D_{i},D_m]$ and $i=1,\ldots,m$}. \label{gh}
\end{eqnarray}
We prove this by induction and by using definitions (\ref{91}), (\ref{92}). Using (\ref{defphi}) for $i=1$ we have that
\begin{eqnarray}
v_0(D_1,t)&=&X(t)+\int_0^{D_1}e^{-Ay}\sum_{j=1}^{j=m}b_ju_j(y,t)dy\nonumber\\
&=&X(t)+\int_0^{\phi_1(x)}e^{-Ay}\sum_{j=1}^{j=m}b_ju_j(y,t)dy,\quad \mbox{for all $x\geq D_1$}\nonumber\\
&=&v_0(x,t),\quad \mbox{for all $x\geq D_1$}.
\end{eqnarray}
Assume now that (\ref{gh}) holds for some $i$. Then using (\ref{defphi}) and (\ref{91}) we have that
\begin{eqnarray}
v_i(D_{i+1},t)&=&e^{-A_iD_i}e^{A_{i-1}D_{i}}v_{i-1}(D_{i+1},t)+\int_{D_i}^{D_{i+1}}e^{-A_iy}\sum_{j=i+1}^{j=m}b_ju_j(y,t)dy\nonumber\\
&=&e^{-A_i\phi_i(x)}e^{A_{i-1}\phi_i(x)}v_{i-1}(D_{i+1},t)+\int_{\phi_i(x)}^{\phi_{i+1}(x)}e^{-A_iy}\sum_{j=i+1}^{j=m}b_ju_j(y,t)dy,\nonumber\\
&& \mbox{for all $x\geq D_{i+1}$}.
\end{eqnarray}
Therefore, using (\ref{gh}) we get that
\begin{eqnarray}
v_i(D_{i+1},t)&=&e^{-A_i\phi_i(x)}e^{A_{i-1}\phi_i(x)}v_{i-1}(x,t)+\int_{\phi_i(x)}^{\phi_{i+1}(x)}e^{-A_iy}\sum_{j=i+1}^{j=m}b_ju_j(y,t)dy\nonumber\\
&=&v_i(x,t),\quad \mbox{for all $x\geq D_{i+1}$},
\end{eqnarray}
which completes the proof that (\ref{com}) holds. Therefore, using (\ref{go}) the backstepping transformations (\ref{eq2})--(\ref{bacmnew}) can be written in the linear case as
 \begin{eqnarray}
w_1(x,t)&=&u_1(x,t)-\kappa_1^Te^{A_0x}v_0(x,t),\quad x\in[0,D_1]\label{eq2wew}\\
w_2(x,t)&=&u_2(x,t)-\kappa_2^Te^{A_1x}v_1(x,t),\quad x\in[0,D_2]\label{bac2newwew}\\
&\vdots&\nonumber\\
w_m(x,t)&=&u_m(x,t)-\kappa_m^Te^{A_{m-1}x}v_{m-1}(x,t),\quad x\in[0,D_m],\label{bacmnewwew}
\end{eqnarray}
which are identical to the backstepping transformations (67), (68) from \cite{daisuke}.
\end{proof}

As a special case we provide explicitly the first two backstepping transformations. Using (\ref{91}), (\ref{92}) the backstepping transformations (\ref{eq2wew}), (\ref{bac2newwew}) take the form
 \begin{eqnarray}
w_1(x,t)\!\!&=&\!\!u_1(x,t)-k_1^T\left(e^{Ax}X(t)+\int_0^{x}e^{A(x-y)}\sum_{i=1}^{i=m}b_iu_i(y,t)dy\right),\quad x\in[0,D_1]\label{eq21}\\
w_2(x,t)\!\!&=&\!\!u_2(x,t)-\left\{\begin{array}{ll}k_2^T\left(e^{Ax}X(t)+\int_0^{x}e^{A(x-y)}\sum_{i=1}^{i=m}b_iu_i(y,t)dydy\right),\quad x\in[0,D_1]\\k_2^T\left(e^{A_1(x-D_1)}e^{AD_1}X(t)+e^{A_1(x-D_1)}\int_0^{D_1}e^{A(D_1-y)}\sum_{i=1}^{i=m}b_iu_i(y,t)dy\right.\\+\left.\int_{D_1}^xe^{A_1(x-y)}\sum_{i=2}^{i=m}b_iu_i(y,t)dy\right),\quad x\in[D_1,D_2]\end{array}\right.\!\!,\label{bac2new1}
\end{eqnarray}
which are identical to relations (15), (77), and (78) from \cite{daisuke}. The predictor feedback control laws are
\begin{eqnarray}
u_1(D_1,t)&=&k_1^T\left(e^{AD_1}X(t)+\int_0^{D_1}e^{A(D_1-y)}\left(b_1u_1(y,t)+b_2u_2(y,t)\right)dy\right)\\
u_2(D_2,t)&=&k_2^T\left(e^{A_1D_{21}}e^{AD_1}X(t)+e^{A_1D_{21}}\int_0^{D_1}e^{A(D_1-y)}b_1u_1(y,t)dy\right.\nonumber\\
&&\left.\vphantom{\int_{D_1}^{D_2}}+e^{A_1D_{21}}\int_0^{D_1}e^{A(D_1-y)}b_2u_2(y,t)dy+\int_{D_1}^{D_2}e^{A_1(D_2-y)}b_2u_2(y,t)dy\right),
\end{eqnarray}
and in terms of the delayed states $U_i(t+x-D_i)=u_i(x,t)$ by
\begin{eqnarray}
U_1(t)&=&k_1^T\left(e^{AD_1}X(t)+\int_{t-D_1}^{t}e^{A(t-\theta)}b_1U_1(\theta)d\theta+\int_{t-D_2}^{t-D_{21}}e^{A(t-\theta-D_{21})}b_2U_2(\theta)d\theta\right)\label{comcon1}\\
U_2(t)&=&k_2^T\left(e^{A_1D_{21}}e^{AD_1}X(t)+e^{A_1D_{21}}\int_{t-D_1}^{t}e^{A(t-\theta)}b_1U_1(\theta)d\theta\right.\nonumber\\
&&\left.\vphantom{\int_{D_1}^{D_2}}+e^{A_1D_{21}}\int_{t-D_2}^{t-D_{21}}e^{A(t-\theta-D_{21})}b_2U_2(\theta)d\theta+\int_{t-D_{21}}^{t}e^{A_1(t-\theta)}b_2U_2(\theta)d\theta\right).\label{comcon2}
\end{eqnarray}
Relations (\ref{comcon1}), (\ref{comcon2}) can be written as
\begin{eqnarray}
U_1(t)&=&k_1^TP_1(t)\label{comcon11}\\
U_2(t)&=&k_2^T\left(e^{A_1D_{21}}P_1(t)+\int_{t-D_{21}}^{t}e^{A_1(t-\theta)}b_2U_2(\theta)d\theta\right),\label{comcon21}
\end{eqnarray}
which are the control laws (46), (47) from \cite{daisuke}. Since the explicit expression of the general $i$-th control law for the general case of $m$ inputs is identical to the one obtained in \cite{daisuke} (relation (43)), for clarity of exposition we provide it again here
\begin{eqnarray}
U_i(t)&=&k_i^T\left(\vphantom{\sum_{j=1}^{j=i}\int_{t-D_j}^t}\Phi(D_i,0)X(t)+\sum_{j=1}^{j=i}\int_{t-D_j}^t\Phi\left(D_i,\theta-t+D_j\right)b_jU_j(\theta)d\theta\right.\nonumber\\
&&+\left.\sum_{j=i+1}^{j=m}\int_{t-D_j}^{t-D_{ji}}\Phi\left(D_i,\theta-t+D_j\right)b_jU_j(\theta)d\theta\right),
\end{eqnarray}
where
\begin{eqnarray}
\Phi(x,y)=e^{A_i(x-D_i)}e^{A_{i-1}(D_i-D_{i-1})}\ldots e^{A_j(D_{j+1}-y)},\quad \mbox{for all $D_i\leq x\leq D_{i+1}$ and $D_j\leq y\leq\phi_{j+1}(x)$},
\end{eqnarray}
for any $i,j\in\left\{0,1,\ldots, m-1\right\}$ satisfying $0\leq i<j$, and
\begin{eqnarray}
\Phi(x,y)=e^{A_i(x-y)},\quad \mbox{for all $D_i\leq x\leq y\leq D_{i+1}$},
\end{eqnarray}
for $i\in\left\{0,1,\ldots, m-1\right\}$, where $\phi_i$ is defined in (\ref{defphi}) and we adopt the notation $D_0=0$.
\section{Simulations}
\label{sec sim}
We consider the stabilization problem of the nonholonomic unicycle subject to distinct input delays from Section \ref{uni} whose dynamics are described by (\ref{sysex1})--(\ref{sysexn}). We choose $D_1=0.5$ and $D_2=1$. The initial conditions for the plant are chosen as $X_1(0)=X_2(0)=X_3(0)=0.5$ and for the actuator states as $U_1(\theta)=0$, for all $-D_1\leq\theta\leq0$, and $U_2(\theta)=0$, for all $-D_2\leq\theta\leq0$. In Fig. \ref{fig1} we show the response of the closed-loop system under the predictor-feedback controller (\ref{u1ex1})--(\ref{pl}) and in Fig. \ref{fig2} the corresponding control efforts. The predictor-feedback controller asymptotically stabilizes the nonholonomic unicycle. In particular, at $t=1$ both delays are compensated and the system behaves as in the nominal, delay-free case. In contrast, at is is shown in Fig. \ref{fig3}, the closed-loop system under the nominal, uncompensated controller is unstable.
\begin{figure}
\centering
\includegraphics[width=0.733\linewidth]{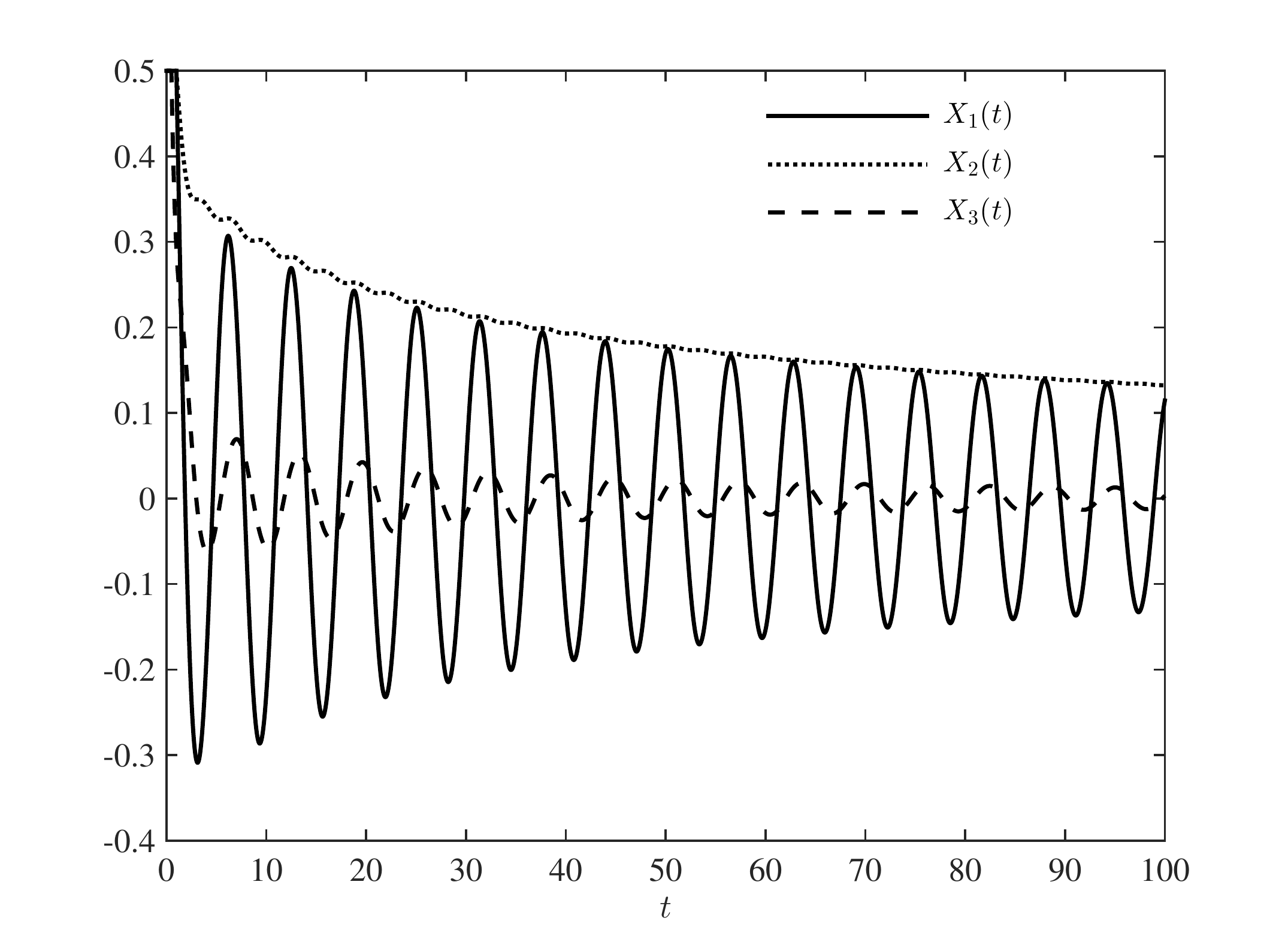}
\caption{The closed-loop response of system (\ref{sysex1})--(\ref{sysexn}) with $D_1=0.5$ and $D_2=1$ under the predictor-feedback controller (\ref{u1ex1})--(\ref{pl}). The initial conditions of the plant are $X_1(0)=X_2(0)=X_3(0)=0.5$ and for the actuator states are $U_1(\theta)=0$, for all $-0.5\leq\theta\leq0$, and $U_2(\theta)=0$, for all $-1\leq\theta\leq0$.}
\label{fig1}
\end{figure}

\begin{figure}
\centering
\includegraphics[width=0.733\linewidth]{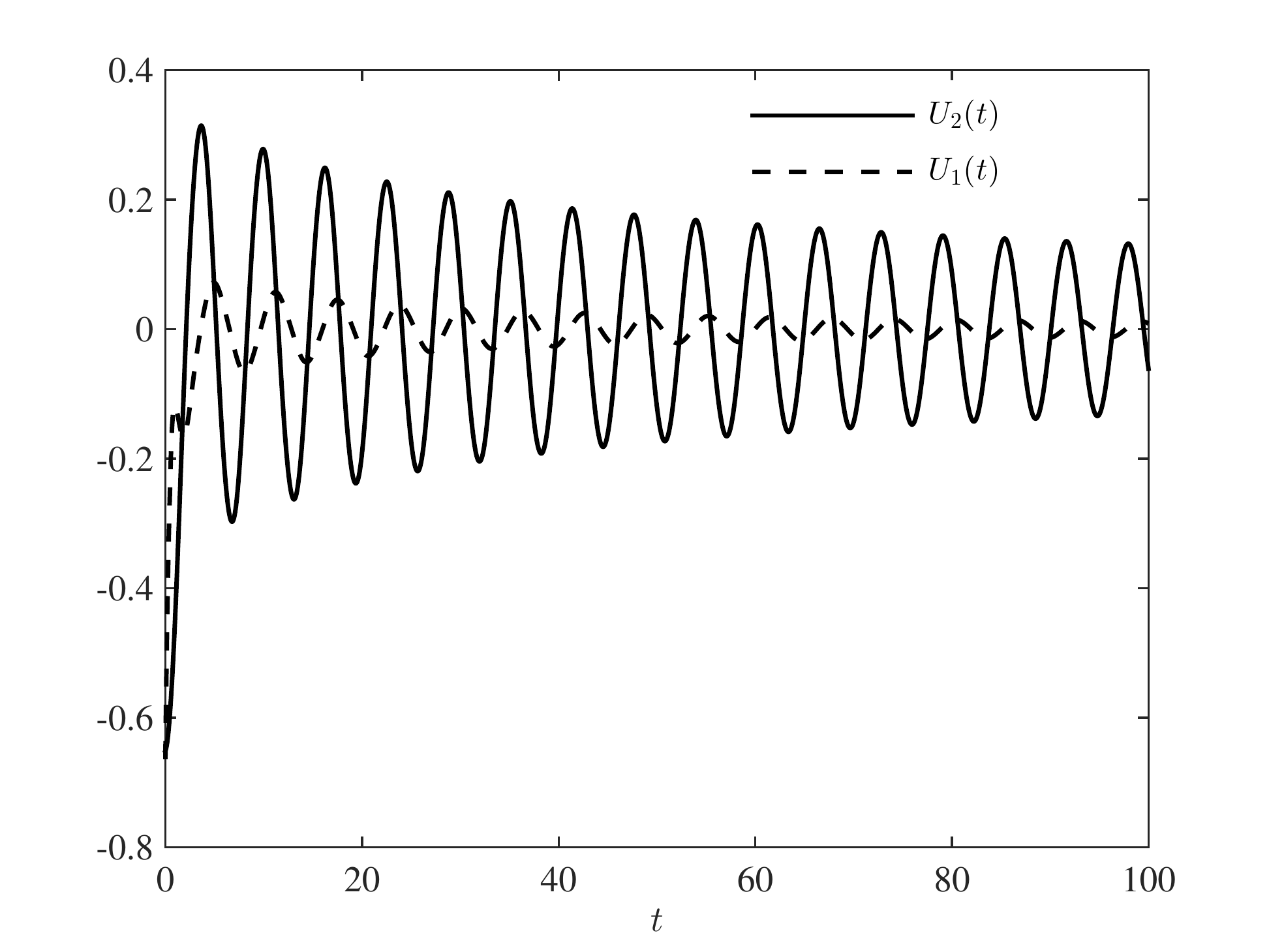}
\caption{The control efforts (\ref{u1ex1}) and (\ref{u2ex1}) of the closed-loop response of system (\ref{sysex1})--(\ref{sysexn}) with $D_1=0.5$ and $D_2=1$ under the predictor-feedback controller (\ref{u1ex1})--(\ref{pl}). The initial conditions of the plant are $X_1(0)=X_2(0)=X_3(0)=0.5$ and for the actuator states are $U_1(\theta)=0$, for all $-0.5\leq\theta\leq0$, and $U_2(\theta)=0$, for all $-1\leq\theta\leq0$.}
\label{fig2}
\end{figure}

\begin{figure}
\centering
\includegraphics[width=0.733\linewidth]{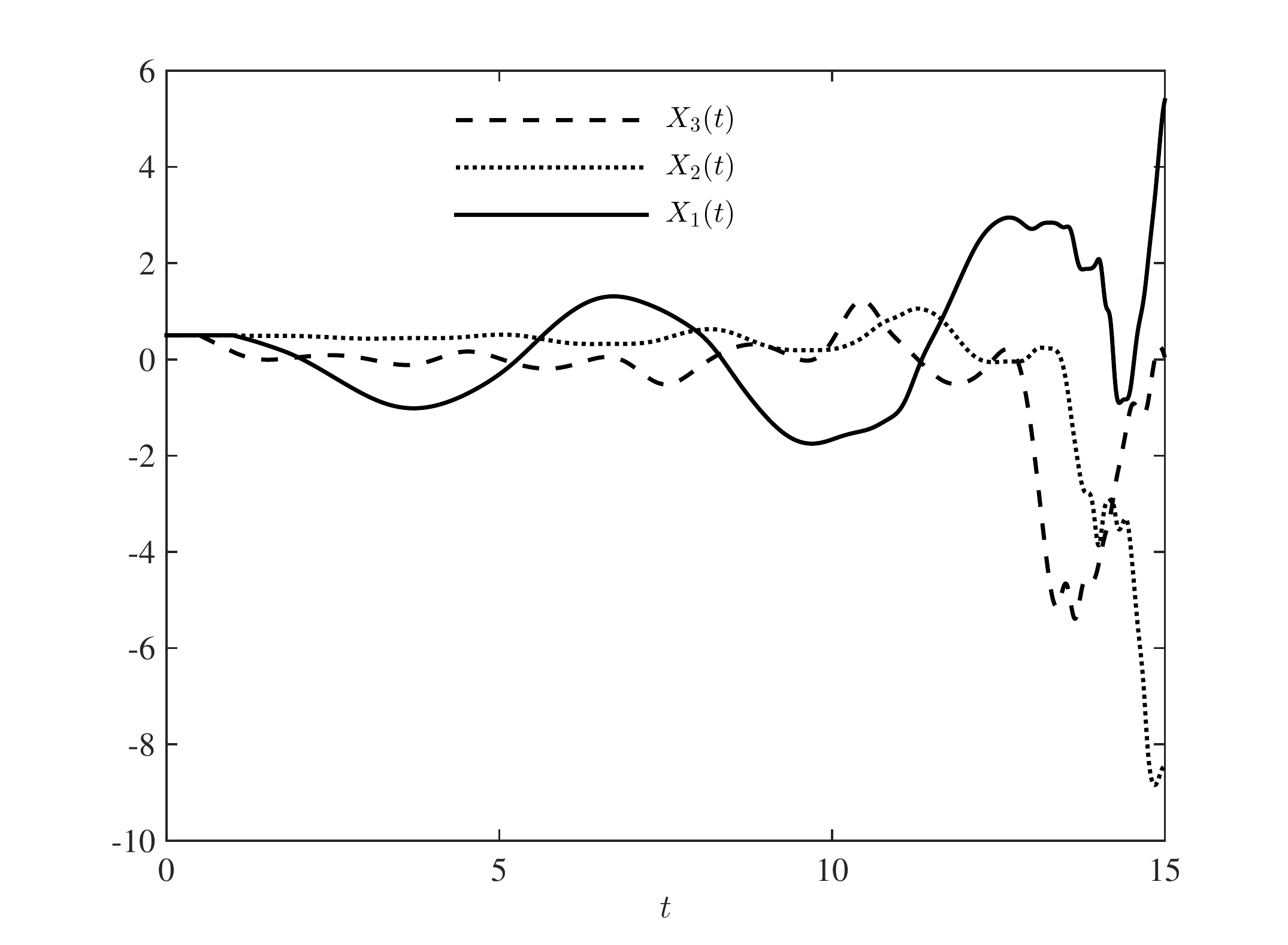}
\caption{The closed-loop response of system (\ref{sysex1})--(\ref{sysexn}) with $D_1=0.5$ and $D_2=1$ under the nominal controller (\ref{u1ex})--(\ref{defq}). The initial conditions of the plant are $X_1(0)=X_2(0)=X_3(0)=0.5$ and for the actuator states are $U_1(\theta)=0$, for all $-0.5\leq\theta\leq0$, and $U_2(\theta)=0$, for all $-1\leq\theta\leq0$.}
\label{fig3}
\end{figure}

\section{Conclusions}
We presented a predictor-feedback control design methodology for the stabilization of multi-input nonlinear systems with distinct input delays. We proved global asymptotic stability of the closed-loop system using Lyapunov arguments and arguments based on estimates of closed-loop solutions. We also dealt with linear systems as a special case of our methodology. We applied our approach to the stabilization of the nonholonomic unicycle with delayed inputs.


In contrast to the single-input case, for which the predictor-feedback controller is available explicitly for some classes of nonlinear systems (see, for instance, \cite{kar2}) with a specific open-loop structure, in the multi-input case the class of nonlinear systems for which the predictor-feedback control law can be obtained explicitly seems to be more restrictive (although it can be obtained explicitly in some trivial cases, such as, for example, the case of linear systems). This is attributed to the fact that the formulae of the predictors in the multi-input case depends not only on the open-loop structure of the system but also on the form of the feedback functions.


As a next step, it is of interest to study the problem of stabilization of multi-input nonlinear systems with actuator dynamics governed by wave or diffusion PDEs with different wave propagation speeds or diffusion coefficients, respectively, in each individual input channel. The starting point for such a study is \cite{bek8}. 

\setcounter{equation}{0}
\renewcommand{\theequation}{A.\arabic{equation}}
\appendices
\section*{Appendix A}

\subsection*{Proof of Lemma \ref{leminvz}}
\label{proof leminvz}
Setting $x=0$ into (\ref{eq2})--(\ref{bacmnew}), and (\ref{p1}) we get (\ref{plant1tr}). Using (\ref{sol1}) one can conclude that $\partial_tp_1(x,t)=\partial_xp_1(x,t)$, for all $x\in[0,D_1]$. Hence, using (\ref{pde1}) and (\ref{eq2}) we get that
\begin{eqnarray}
\partial_tw_1(x,t)-\partial_xw_1(x,t)&=&\partial_tu_1(x,t)-\partial_xu_1(x,t)+\frac{\partial \kappa_1\left(p_1(x,t)\right)}{\partial p}\left(\partial_tp_1(x,t)-\partial_xp_1(x,t)\right)\nonumber\\
&=&0.
\end{eqnarray}
Similarly, using the fact that $p_i(x,t)=X(t+x)$, for all $x\in[D_{i-1},D_i]$ and $i=2,\ldots,m$, we get (\ref{pde1tr}). Setting $x=D_1$ into (\ref{eq2})--(\ref{bacmnew}) and using (\ref{pde2}), (\ref{connew1}) we arrive at (\ref{pde2tr}). 

\subsection*{Proof of Lemma \ref{inversezeta}}
\label{proof inversezeta}
We prove this lemma by showing that $p_1(x,t)=\pi_1(x,t)$, for all $x\in[0,D_1]$, and $p_i(x,t)=\pi_i(x,t)$, for all $x\in[D_{i-1},D_i]$, $i=2,\ldots,m$. Equivalently we show that $\pi_i(x,t)=X(t+x)$, $i=1,\ldots,m$ and use the fact that $p_i(x,t)=X(t+x)$, $i=1,\ldots,m$. We first observe that $\pi_1(0,t)=X(t)$, and hence, $\pi_1$ satisfies the following initial value problem for all $x\in[0,D_1]$
\begin{eqnarray}
\partial_x\pi_1(x,t)&=&f\left(\pi_1(x,t),w_1(x,t)+\kappa_1\left(\pi_1(x,t)\right),\ldots,w_m(x,t)+\kappa_m\left(\pi_1(x,t)\right)\right)\label{pode1inv1}\\
\pi_1(0,t)&=&X(t).\label{pode2inv}
\end{eqnarray}
The solution to (\ref{pode1inv1}), (\ref{pode2inv}) is $\pi_1(x,t)=X(t+x)$. This solution satisfies the boundary condition (\ref{pode2inv}). It also satisfies the ODE (\ref{pode1inv1}) since by (\ref{plant1tr}), (\ref{pde1tr}) it follows that the following holds
\begin{eqnarray}
X'(t+x)&=&f\left(X(t+x),W_1(t+x-D_1)+\kappa_1\left(X(t+x)\right),\right.\nonumber\\
&&\left.\ldots,W_m(t+x-D_m)+\kappa_m\left(X(t+x)\right)\right),\quad \mbox{for all $t\geq0$ and $0\leq x\leq D_1$},
\end{eqnarray} 
where the solutions to (\ref{pde1tr}), (\ref{pde2tr}) are given by $w_i(x,t)=W(t+x-D_i)$, for all $x\in[0,D_i]$ and $i=1,\ldots,m$, where $W_i$, $i=1,\ldots,m$, satisfy $W_i(t)=0$, for all $t\geq0$.
Assume now that $\pi_j(x,t)=X(t+x)$, for all $x\in[D_{j-1},D_j]$, and some $j$. Then we claim that $\pi_{j+1}=X(t+x)$, for all $x\in [D_j,D_{j+1}]$. The function $\pi_{j+1}$ satisfies the following initial value problem
\begin{eqnarray}
\partial_x\pi_{j+1}(x,t)&=&f\left(\pi_{j+1}(x,t),\kappa_1\left(\pi_{j+1}(x,t)\right),\ldots,\kappa_j\left(\pi_{j+1}(x,t)\right),w_{j+1}(x,t)\right.\nonumber\\
&&\left.+\kappa_{j+1}\left(\pi_{j+1}(x,t)\right),\ldots,w_{m}(x,t)+\kappa_{m}\left(\pi_{j+1}(x,t)\right)\right),\quad x\in [D_j,D_{j+1}]\label{eqn1}\\
\pi_{j+1}(D_j,t)&=&\pi_j(D_j,t).\label{eqn1b}
\end{eqnarray}
Since $\pi_j(x,t)=X(t+x)$, for all $x\in[D_{j-1},D_j]$, it follows that $\pi_{j+1}(x,t)=X(t+x)$, $x\in[D_j,D_{j+1}]$, satisfies the boundary condition (\ref{eqn1b}). Using (\ref{pde1tr}), (\ref{pde2tr}) we get that $w_i(x,t)=W_i(t+x-D_i)$, for all $x\in [0,D_i]$, where $W_i(t)=0$, for all $t\geq0$. Since from (\ref{plant1tr}) it holds that 
\begin{eqnarray}
X'(t+x)&=&f\left(X(t+x),W_1(t+x-D_1)+\kappa_1\left(X(t+x)\right),\right.\nonumber\\
&&\left.\ldots,W_m(t+x-D_m)+\kappa_m\left(X(t+x)\right)\right),\mbox{for all $t\geq0$ and $x\geq0$}\label{plant1trex},
\end{eqnarray}
one can conclude that the following hods for all $t\geq0$ and $x\in [D_j,D_{j+1}]$
\begin{eqnarray}
X'(t+x)&=&f\left(X(t+x),\kappa_1\left(X(t+x)\right),\ldots,\kappa_j\left(X(t+x)\right),W_{j+1}(t+x-D_{j+1})\right.\nonumber\\
&&\left.+\kappa_{j+1}\left(X(t+x)\right),\ldots,W_m(t+x-D_m)+\kappa_m\left(X(t+x)\right)\right)\label{plant1trex1},
\end{eqnarray}
and hence, $\pi_{j+1}=X(t+x)$, for all $x\in [D_j,D_{j+1}]$. Since this holds for an arbitrary $j$ one can conclude that $\pi_{j}=X(t+x)$, for all $x\in [D_{j-1},D_{j}]$  and $j=1,\ldots,m$, with $D_0=0$, which completes the proof.

\subsection*{Proof of Lemma \ref{lemcl}}
\label{proof lemcl}
From Assumption \ref{ass3} it follows from \cite{sontag} that there exist a smooth function $S\left(Z\right): \mathbb{R}^{n+1}\to\mathbb{R}_+$ and class $\mathcal{K}_{\infty}$ functions $\alpha_1$, $\alpha_2$, $\alpha_3$, and $\alpha_4$, such that
\begin{eqnarray}
\!\!\!\!\!\!\alpha_1\left(\left|X\right|\right)\!\!\!&\leq&\!\!\! S\left(X\right)\leq\alpha_2\left(\left|X\right|\right)\label{issbg}\\
\!\!\!\!\!\!\frac{\partial S\left(X\right)}{\partial X}f\left(X,\kappa_1\left(X\right)+\omega_1,\ldots,\kappa_m\left(X\right)+\omega_m\right)\!\!\!&\leq&\!\!\! -\alpha_3\left(\left|X\right|\right)+\alpha_{4}\left(\sum_{i=1}^{i=m}|\omega_i|\right)\label{issg},
\end{eqnarray} 
for all $X\in\mathbb{R}^n$ and $\omega_1,\ldots,\omega_m\in\mathbb{R}$. With similar calculations as in \cite{krstic} (Theorem 5) we get that
\begin{eqnarray}
\frac{d\|w_i(t)\|_{c,\infty}}{dt}&\leq&-c\|w_i(t)\|_{c,\infty},\quad i=1,\ldots,m,\label{lypwnew}
\end{eqnarray}
along the solutions of (\ref{pde1tr}), (\ref{pde2tr}). Consider the Lyapunov functional
\begin{eqnarray}
V(t)&=&S(X(t))+\frac{2}{c}\int_0^{\sum_{i=1}^{i=m}\|w_i(t)\|_{c,\infty}}\frac{\alpha_{4}(r)}{r}dr.\label{lyapnew123}
\end{eqnarray}
Using Lemma \ref{leminvz} (relation (\ref{plant1tr})) and relations (\ref{issg}), (\ref{lypwnew}) we get along the solutions of (\ref{plant1tr})--(\ref{pde2tr}) that
\begin{eqnarray}
\dot{V}(t)&\leq&-\alpha_3\left(\left|X(t)\right|\right)+\alpha_{4}\left(\sum_{i=1}^{i=m}|w_i(0,t)|\right)-2\alpha_4\left(\sum_{i=1}^{i=m}\|w_i(t)\|_{c,\infty}\right).\label{lyapnew11}
\end{eqnarray}
Using the fact that $|w_i(0,t)|\leq\|w_i(t)\|_{c,\infty}$, $i=1,\ldots,m$, we get that
\begin{eqnarray}
\dot{V}(t)&\leq&-\alpha_3\left(\left|X(t)\right|\right)-\alpha_4\left(\sum_{i=1}^{i=m}\|w_i(t)\|_{c,\infty}\right).\label{lyapnew11n}
\end{eqnarray}
It follows with the help of (\ref{issbg}) that there exists a class $\mathcal{K}$ function $\gamma_1$ such that 
\begin{eqnarray}
\dot{V}(t)\leq-\gamma_1\left(V(t)\right),
\end{eqnarray}
and hence, with the comparison principle (see, for example, \cite{khalil}) one can conclude that there exists a class $\mathcal{KL}$ function $\beta_2$ such that
\begin{eqnarray}
V(t)\leq\beta_2\left(V(0),t\right).
\end{eqnarray}
Using (\ref{issbg}), the fact that $\|w_i(t)\|_{\infty}\leq\|w_i(t)\|_{c,\infty}\leq e^{cD_i}\|w_i(t)\|_{\infty}$, $i=1,\ldots,m$, and the properties of class $\mathcal{K}$ functions we get estimate (\ref{blem}).

\subsection*{Proof of Lemma \ref{lemmabdirect}}
\label{proof lemmabdirect}
We prove the lemma by induction. For clarity we present two initial steps. We prove first bound (\ref{pboundd}). Using the fact that $p_1$ satisfies (\ref{p1ode}) we get under Assumption \ref{ass1} that there exists a smooth function $R:\mathbb{R}^n\to\mathbb{R}_+$ and class $\mathcal{K}_{\infty}$ functions $\alpha_5$, $\alpha_6$, and $\alpha_7$ such that
\begin{eqnarray}
\alpha_5(|X|)&\leq& R(X)\leq\alpha_6(|X|)\label{for1}\\
\frac{\partial R(X)} {\partial X}f\left(X,\omega_1,\ldots,\omega_m\right)&\leq& R(X)+\alpha_7\left(\sum_{i=1}^{i=m}|\omega_i|\right),
\end{eqnarray}
for all $X\in\mathbb{R}^n$ and $\omega_i\in\mathbb{R}$, $i=1,\ldots,m$. Therefore,
\begin{eqnarray}
\frac{dR\left(p_1(x,t)\right)}{dx}&=&\frac{\partial R\left(p_1(x,t)\right)}{\partial p_1}f\left(p_1(x,t),u_1(x,t),\ldots,u_m(x,t)\right)\nonumber\\
&\leq& R\left(p_1(x,t)\right)+\alpha_7\left(\sum_{i=1}^{i=m}|u_i(x,t)|\right),\quad x\in[0,D_1].
\end{eqnarray}
Hence, using (\ref{p1b}) we get that
\begin{eqnarray}
R\left(p_1(x,t)\right)&\leq& e^{D_1}R\left(X(t)\right)+e^{D_1}\int_0^{D_1}\alpha_7\left(\sum_{i=1}^{i=m}|u_i(x,t)|\right)dx,
\end{eqnarray}
and hence,
\begin{eqnarray}
R\left(p_1(x,t)\right)&\leq& e^{D_1}R\left(X(t)\right)+D_1e^{D_1}\alpha_7\left(\sum_{i=1}^{i=m}\|u_i(t)\|_{\infty}\right).
\end{eqnarray}
With the help of (\ref{for1}) and the properties of class $\mathcal{K}$ functions we get estimate (\ref{pboundd}). We prove next (\ref{pboundd2}). Under Assumption \ref{ass2} one can conclude that there exists a smooth function $R_1:\mathbb{R}^n\to\mathbb{R}_+$ and class $\mathcal{K}_{\infty}$ functions $\alpha_8$, $\alpha_9$, and $\alpha_{10}$ such that
\begin{eqnarray}
\alpha_8(|X|)&\leq& R_1(X)\leq\alpha_9(|X|)\label{for1new}\\
\frac{\partial R_1(X)} {\partial X}g_1\left(X,\omega_2,\ldots,\omega_m\right)&\leq& R_1(X)+\alpha_{10}\left(\sum_{i=2}^{i=m}|\omega_i|\right),\label{for12}
\end{eqnarray}
for all $X\in\mathbb{R}^n$ and $\omega_i\in\mathbb{R}$, $i=2,\ldots,m$, where $g_1$ is defined in Assumption \ref{ass2}. Using the fact that $p_2$ satisfies (\ref{p2ode}) we get from (\ref{for12}) that
\begin{eqnarray}
\frac{dR_1\left(p_2(x,t)\right)}{dx}&=&\frac{\partial R_{1}\left(p_2(x,t)\right)}{\partial p}g_1\left(p_2(x,t),u_2(x,t),\ldots,u_m(x,t)\right)\nonumber\\
&\leq& R_{1}\left(p_2(x,t)\right)+\alpha_{10}\left(\sum_{i=2}^{i=m}|u_i(x,t)|\right),\quad x\in[D_1,D_2].
\end{eqnarray}
Thus, using (\ref{p2b}) we get that
\begin{eqnarray}
R_{1}\left(p_2(x,t)\right)&\leq& e^{D_2}R_{1}\left(p_1(D_1,t)\right)+e^{D_2}\int_0^{D_2}\alpha_{10}\left(\sum_{i=2}^{i=m}|u_i(x,t)|\right)dx,
\end{eqnarray}
and hence, from (\ref{for1new}) that
\begin{eqnarray}
R_{1}\left(p_2(x,t)\right)&\leq& e^{D_2}\alpha_9\left(\|p_1(t)\|_{\infty}\right)+D_2e^{D_2}\alpha_{10}\left(\sum_{i=2}^{i=m}\|u_i(t)\|_{\infty}\right).\label{new1}
\end{eqnarray}
Using (\ref{for1new}) and (\ref{pboundd}) we get from (\ref{new1}) estimate (\ref{pboundd2}). Assume now that for some $j$ it holds that
\begin{eqnarray}
\|p_j(t)\|_{\infty}\leq \rho_j\left(\Xi(t)\right).\label{inter}
\end{eqnarray}
Under Assumption \ref{ass2} there exist a smooth function $R_{j}:\mathbb{R}^n\to\mathbb{R}_+$ and class $\mathcal{K}_{\infty}$ functions $\alpha_{8+3(j-1)}$, $\alpha_{9+3(j-1)}$, and $\alpha_{10+3(j-1)}$ such that
\begin{eqnarray}
\alpha_{8+3(j-1)}(|X|)&\leq& R_j(X)\leq\alpha_{9+3(j-1)}(|X|)\label{for1newm}\\
\frac{\partial R_j(X)} {\partial X}g_j\left(X,\omega_{j+1},\ldots,\omega_m\right)&\leq& R_j(X)+\alpha_{10+3(j-1)}\left(\sum_{i=j+1}^{i=m}|\omega_i|\right),\label{for12m}
\end{eqnarray}
for all $X\in\mathbb{R}^n$ and $\omega_i\in\mathbb{R}$, $i=j+1,\ldots,m$. Using the fact that $p_{j+1}$ satisfies (\ref{pm1ode}) and definition $g_j\left(X,\omega_{j+1},\ldots,\omega_m\right)=f\left(X,\kappa_1\left(X\right),\ldots,\kappa_{j}\left(X\right),\omega_{j+1},\ldots,\omega_m\right)$ we get from (\ref{for12m}) that
\begin{eqnarray}
\frac{dR_j\left(p_{j+1}(x,t)\right)}{dx}&=&\frac{\partial R_{j}\left(p_{j+1}(x,t)\right)}{\partial p}g_j\left(p_{j+1}(x,t),u_{j+1}(x,t),\ldots,u_m(x,t)\right)\nonumber\\
&\leq& R_{j}\left(p_{j+1}(x,t)\right)+\alpha_{10+3(j-1)}\left(\sum_{i=j+1}^{i=m}|u_i(x,t)|\right),\quad x\in[D_j,D_{j+1}].
\end{eqnarray}
Therefore, employing (\ref{for1newm}) we get that
\begin{eqnarray}
R_{j}\left(p_{j+1}(x,t)\right)&\leq& e^{D_{j+1}}\alpha_{9+3(j-1)}\left(\|p_j(t)\|_{\infty}\right)+D_{j+1}e^{D_{j+1}}\alpha_{10+3(j-1)}\left(\sum_{i=j+1}^{i=m}\|u_i(t)\|_{\infty}\right).\label{new1j}
\end{eqnarray}
Using (\ref{for1newm}), (\ref{inter}), and the properties of class $\mathcal{K}$ functions the proof is completed. 

\subsection*{Proof of Lemma \ref{lemmabdirectinv}}
\label{proof lemmabdirectinv}
The proof of this lemma employs similar arguments to the proof of Lemma \ref{lemmabdirect} with the difference that one uses the ODEs in $x$ for $\pi_i$, $i=1,\ldots,m$ together with Assumption \ref{ass3}. We again prove this lemma by induction. We first prove (\ref{invbpi}). Using the fact that $\pi_1$ satisfies the initial value problem (\ref{pode1inv1}), (\ref{pode2inv}), we get under Assumption \ref{ass3} and the definition of input-to-state stability (see, for example, \cite{sontag}) that there exist a class $\mathcal{KL}$ function $\beta_3$ and a class $\mathcal{K}_{\infty}$ function $\gamma_2$ such that
\begin{eqnarray}
|\pi_1(x,t)|\leq \beta_3\left(|X(t)|,x\right)+\gamma_2\left(\sup_{0\leq y\leq x}\left(\sum_{i=1}^{i=m}|w_i(x,t)|\right)\right),\quad x\in[0,D_1],
\end{eqnarray}
and hence, we arrive at (\ref{invbpi}) with $\bar{\rho}_1(s)=\beta_3\left(s,0\right)+\gamma_2\left(s\right)$. Assume next that for some $j$ it holds that
\begin{eqnarray}
\|\pi_j(t)\|_{\infty}\leq \bar{\rho}_j\left(\bar{\Xi}(t)\right).\label{interinv}
\end{eqnarray}
Using the fact that $\pi_{j+1}$ satisfies the initial value problem (\ref{eqn1}), (\ref{eqn1b}) we get under Assumption \ref{ass3} that 
\begin{eqnarray}
|\pi_{j+1}(x,t)|\leq \beta_3\left(|\pi_j(D_j)|,x-D_j\right)+\gamma_2\left(\sup_{D_j\leq y\leq x}\left(\sum_{i=j+1}^{i=m}|w_i(x,t)|\right)\right),\quad x\in[D_j,D_{j+1}],
\end{eqnarray}
and hence, with (\ref{interinv}) we arrive at $\|\pi_{j+1}(t)\|_{\infty}\leq \bar{\rho}_{j+1}\left(\bar{\Xi}(t)\right)$, with $\bar{\rho}_{j+1}(s)=\beta_3\left(\bar{\rho}_j,0\right)+\gamma_2\left(s\right)$. 

\subsection*{Proof of Lemma \ref{lemmaeq}}
We prove first (\ref{b1eq}). Using the fact that $\kappa_i$, $i=1,\ldots,m$, are locally Lipschitz with $\kappa_i(0)=0$, $i=1,\ldots,m$, there exist class $\mathcal{K}_{\infty}$ functions ${\alpha}^*_i$, $i=1,\ldots,m$, such that
\begin{eqnarray}
\left|\kappa_i(X)\right|\leq{\alpha}^*_i\left(|X|\right),\quad i=1,\ldots,m,
\end{eqnarray}
for all $X\in\mathbb{R}^n$. Hence, using (\ref{eq2})--(\ref{bacmnew}) and relations (\ref{pboundd})--(\ref{pbounddm}) from Lemma \ref{lemmabdirect} we get estimate (\ref{b1eq}) with $\rho(s)=s+\sum_{i=1}^{i=m}{\alpha}^*_i\left(\sum_{j=1}^{j=m}\rho_j(s)\right)$. Similarly, using (\ref{eq2in})--(\ref{bacmnewinv}) we get estimate (\ref{b2eq}) by employing Lemma \ref{lemmabdirectinv} with $\bar{\rho}(s)=s+\sum_{i=1}^{i=m}{\alpha}^*_i\left(\sum_{j=1}^{j=m}\bar{\rho}_j(s)\right)$.




\begin{thebibliography}{50}
\baselineskip=1.8\normalbaselineskip
\bibitem{multi agent 1}
A.Abdessameud and A. Tayebi, ``Formation control of VTOL unmanned aerial vehicles with communication delays," {\em Automatica}, vol. 47, pp. 2383--2394, 2011.
\bibitem{un}
A. Ailon, ``Asymptotic stability in a flexible-joint robot with model uncertainty and multiple time delays in feedback," {\em Journal of the Franklin Institute}, vol. 341, pp. 519--531, 2004. 
\bibitem{angeli}
D. Angeli and E. D. Sontag, ``Forward completeness, unboundedness observability, and their Lyapunov characterizations," {\em Systems \& Control Letters}, vol. 38,  pp. 209--217, 1999
\bibitem{artstein}
Z. Artstein, ``Linear systems with delayed controls: A reduction," {\em IEEE Transactions on Automatic Control}, vol. 27, pp. 869--879, 1982.
\bibitem{bek1}
N. Bekiaris-Liberis, ``Simultaneous compensation of input and state delays for nonlinear systems," {\em Systems \& Control Letters}, vol. 73, pp. 96--102, 2014. 
\bibitem{bek2}
N. Bekiaris-Liberis and M. Krstic, ``Lyapunov stability of linear predictor feedback for distributed input delay," {\em IEEE Transactions on Automatic Control}, vol. 56, pp. 655--660, 2011.
\bibitem{bek3}
N. Bekiaris-Liberis and M. Krstic, ``Compensation of time-varying input and state delays for nonlinear systems," {\em Journal of Dynamic Systems, Measurement, and Control}, vol. 134, paper 011009, 2012.
\bibitem{bek4}
N. Bekiaris-Liberis and M. Krstic, ``Compensation of state-dependent input delay for nonlinear systems," {\em IEEE Transactions on Automatic Control}, vol. 58, pp.  275--289, 2013.
\bibitem{bek5}
N. Bekiaris-Liberis and M. Krstic, {\em Nonlinear Control Under Nonconstant Delays}, SIAM, 2013.
\bibitem{bek6}
N. Bekiaris-Liberis and M. Krstic, ``Robustness of nonlinear predictor feedback laws to time- and state-dependent delay perturbations," {\em Automatica}, 49 (2013) 1576--1590. 
\bibitem{bek7}
N. Bekiaris-Liberis and M. Krstic, ``Nonlinear control under delays that depend on delayed states," {\em European Journal of Control}, vol. 19, pp. 389--398, 2013. 
\bibitem{bek8}
N. Bekiaris-Liberis and M. Krstic, ``Compensation of wave actuator dynamics for nonlinear systems," {\em IEEE Transactions on Automatic Control}, vol. 59, pp. 1555--1570, 2014. 
\bibitem{delph1}
D. Bresch-Pietri, J. Chauvin, and N. Petit, ``Prediction-based feedback control of a class of processes with input-varying delay," {\em American Control Conference}, 2012.
\bibitem{delph2}
D. Bresch-Pietri, J. Chauvin, and N. Petit, ``Invoking Halanay inequality to conclude on closed-loop stability of processes with input-varying delay," {\em IFAC Workshop on Time Delay Systems}, 2012.
\bibitem{delph3}
D. Bresch-Pietri, J. Chauvin and N. Petit, ``Sufficient conditions for the prediction-based stabilization of linear systems subject to input with input-varying delay," {\em American Control Conference}, 2013.
 \bibitem{delph4}
 D. Bresch-Pietri, J. Chauvin and N. Petit, ``Prediction-based stabilization of linear systems subject to input-dependent input delay of integral type," {\em IEEE Transactions of Automatic Control}, vol. 59, pp. 2385--2399, 2014.
 \bibitem{delph5}
D. Bresch-Pietri and M. Krstic, ``Delay-adaptive control for nonlinear systems," {\em IEEE Transactions on Automatic Control}, vol. 59, pp. 1203--1218, 2014.
\bibitem{multi agent 2}
S.-J. Chung, U. Ahsun, J.-J. E. Slotine, ``Application of synchronization to formation flying spacecraft: Lagrangian approach," {\em Journal of Guidance, Control, and Dynamics}, vol. 32, pp. 512--526, 2009.
\bibitem{coron}
J-M. Coron, {\it Control and Nonlinearity}, Providence, RI: American Mathematical Society, 2007.
\bibitem{fisher}
N. Fischer, A. Dani, N. Sharma, and W. E. Dixon, ``Saturated control of an uncertain nonlinear system with input delay," {\em Automatica}, vol. 49, pp. 1741--1747, 2013.
\bibitem{mazencvtol}
R. Francisco, F. Mazenc, and S. Mondie, ``Global asymptotic stabilization of a PVTOL aircraft model with delay in the input," in J. Chiasson and J.J. Loiseau (Eds.), {\em Applications of Time Delay Systems}, LNCIS 352, pp. 343--356, Springer, 2007.
\bibitem{ge}
J.-I. Ge and G. Orosz, ``Dynamics of connected vehicle systems with delayed acceleration feedback," {\em Transportation Research Part C}, vol. 46, pp. 46--64, 2014.
\bibitem{gru}
A. Gruszka, M. Malisoff, and F. Mazenc, ``Bounded tracking controllers and robustness analysis for UAVs," {\em IEEE Transactions on Automatic Control}, vol. 58, pp. 280--287, 2013. 
\bibitem{spong}
P. F. Hokayem and M. W. Spong, ``Bilateral teleoperation: An historical survey," {\em Automatica}, vol. 42, pp. 2035--2057, 2006.
\bibitem{kaatrfyllis11}
I. Karafyllis, ``The non-uniform in time small-gain theorem for a wide class of control systems with outputs," {\it European Journal of Control}, vol. 10, pp. 307--323, 2004.
\bibitem{kar1}
I. Karafyllis, ``Stabilization by means of approximate predictors for systems with delayed input," {\it SIAM Journal on Control and Optimization}, vol. 49, pp. 1100--1123, 2011.	
\bibitem{kar2}
I Karafyllis and M Krstic, ``Nonlinear stabilization under sampled and delayed measurements, and with inputs subject to delay and zero-order hold," {\em IEEE Transactions on Automatic Control}, vol. 57, pp. 1141--1154, 2012.
\bibitem{kar3}
I. Karafyllis and M. Krstic, ``Stabilization of nonlinear delay systems using approximate predictors and high-gain observers," {\em Automatica}, vol. 49, pp. 3623--3631, 2013.
\bibitem{kar4}
I. Karafyllis and M. Krstic, ``Robust predictor feedback for discrete-time systems with input delays," {\em International Journal of Control}, vol. 86, pp. 1652--1663, 2013.
\bibitem{kar5}
I. Karafyllis and M. Krstic, ``Numerical schemes for nonlinear predictor feedback," {\em Mathematics of Control, Signals, and Systems}, vol. 26, pp. 519--546, 2014.
\bibitem{karn1}
I. Karafyllis, M. Krstic, T. Ahmed-Ali, F. Lamnabhi-Lagarrigue, ``Global stabilization of nonlinear delay systems with a compact absorbing set," {\em International Journal of Control}, vol. 87, pp. 1010--1027, 2014.
\bibitem{kar6}
I. Karafyllis, M. Malisoff, M. de Queiroz, M. Krstic, and R. Yang, ``Predictor-based tracking for neuromuscular electrical stimulation," {\em International Journal of Robust and Nonlinear Control}, DOI: 10.1002/rnc, 2014.
\bibitem{khalil}
H. Khalil, {\it Nonlinear Systems}, 3rd Edition, Prentice Hall, 2002.
\bibitem{farshad}
P.  Krishnamurthy and F. Khorrami, ``An analysis of the effects of closed-loop commutation delay on stepper motor control and application to parameter estimation," {\em IEEE Transactions on Control Systems Technology}, vol. 16, pp. 70--77, 2008.
\bibitem{krstic first}
M. Krstic, ``On compensating long actuator delays in nonlinear control,ÕÕ {\em IEEE Transactions on Automatic Control}, vol. 53, pp. 1684--1688, 2008.
\bibitem{krstic book}
M. Krstic, {\em Delay Compensation for Nonlinear, Adaptive, and PDE Systems}, Birkhauser, 2009.
\bibitem{krstic}
M. Krstic, ``Input delay compensation for forward complete and feedforward nonlinear systems," {\em IEEE Transactions on Automatic Control}, vol. 55, pp. 287--303, 2010.
\bibitem{lan}
N. Lan, P. E. Crago, and H. J. Chizeck, ``Control of end-point forces of a multijoint limb by functional neuromuscular stimulation," {\em IEEE Transactions on Biomedical Engineering}, vol. 38, pp. 953--965, 1991.
\bibitem{malisoff1}
M. Malisoff and F. Zhang, ``Robustness of a class of three-dimensional curve tracking control laws under time delays and polygonal state constraints," {\em American Control Conference}, Washington, DC, 2013.
\bibitem{malisoff2}
M. Malisoff and F. Zhang, ``An adaptive control design for 3D curve tracking based on robust forward invariance," {\em IEEE Conference on Decision and Control}, Florence, Italy, 2013. 
\bibitem{manitius}
A. Z. Manitius and A. W. Olbrot, ``Finite spectrum assignment problem for systems with delays," {\em IEEEE Transactions on Automatic Control}, vol. 24, pp. 541--552, 1979.
\bibitem{mazenc0}
F. Mazenc, C. de Persis, and M. Bekaik, ``Practical stabilization of nonlinear systems with state-dependent sampling and retarded inputs," {\em American Control Conference}, 2012.
\bibitem{mazenc1}
F. Mazenc and M. Malisoff, ``Local stabilization of nonlinear systems through the reduction model approach," {\em IEEE Transactions on Automatic Control}, vol. 59, pp. 3033--3039, 2014.
\bibitem{mazenc11}
F. Mazenc and M. Malisoff, ``Reduction model method for local stabilization of time-varying nonlinear systems with input delays, structured nonlinearities, and uncertainties," {\em American Control Conference}, 2014.
\bibitem{mazenc2}
F. Mazenc and S.-I. Niculescu, ``Generating positive and stable solutions through delayed state feedback," {\em Automatica}, vol. 47 pp. 525--533, 2011.
\bibitem{mazenc3}
 F. Mazenc, S.-I. Niculescu, M. Bekaik, ``Stabilization of time-varying nonlinear systems with distributed delay by feedback of plant's state," {\em IEEE Transactions on Automatic Control}, vol. 58, pp. 264--269, 2013.
\bibitem{orosz2}
G. Orosz, R. E. Wilson, G. Stepan, ``Traffic jams: dynamics and control," {\em Philos. Trans. R. Soc. A }, vol. 368 (1928), pp. 4455--4479, 2010.
\bibitem{palomino}
A. Palomino, P. Castillo, I. Fantoni, R. Lozano, C. Pegard, ``Control strategy using vision for the stabilization of an experimental PVTOL aircraft setup," {\em IEEE Conference on Decision and Control}, Maui, Hawaii, 2003.
\bibitem{pomet}
J.-B. Pomet, ``Explicit design of time-varying stabilizing control laws for a class of controllable systems without drift," {\em Systems \& Control Letters}, vol. 18, pp. 147--158, 1992.
\bibitem{gideon}
G. Prior and M. Krstic, ``A control Lypunov approach for finite control set MPC of permanent magnet synchronous motors," {\em ASME Journal of Dynamic Systems, Measurement, and Control}, paper 011001, 2015. 
\bibitem{dix1}
N. Sharma, C. M. Gregory, and W. E. Dixon, ``Predictor-based compensation for electromechanical delay during neuromuscular electrical stimulation," {\em IEEE Transactions on Neural Systems and Rehabilitation Engineering}, vol. 19, pp. 601--611, 2011.
\bibitem{sontag}
E. Sontag and Y. Wang, ``On characterizations of the input-to-state stability property," {\it Systems and Control Letters}, vol. 24, pp. 351--359, 1995.
\bibitem{daisuke new}
D. Tsubakino, M. Krstic, and T. R. Oliveira, ``Exact predictor feedbacks for multi-input LTI systems with distinct input delays," {\em Automatica}, submitted, 2015.
\bibitem{daisuke}
D. Tsubakino, T. R. Oliveira, and M. Krstic, ``Predictor-feedback for multi-input LTI systems with distinct delays," {\em American Control Conference}, 2015.
\end{thebibliography}
\end{document}